\def\R{\mathbb{R}}
\newcommand{\ben}{\begin{enumerate}}
\newcommand{\bit}{\begin{itemize}}
\newcommand{\een}{\end{enumerate}}
\newcommand{\eit}{\end{itemize}}
\newcommand{\ed}{\end{document}}
\def\cR{\mathcal{R}}
\def\cB{\mathcal{B}}
\def\cW{\mathcal{W}}
\def\cG{\mathcal{G}}
\def\cF{\mathcal{F}}
\def\cN{\mathcal{N}}
\def\cO{\mathcal{O}}
\let\landa=\lambda
\let\alfa=\alpha
\let\parc=\partial
\def\ep{\varepsilon}
\def\landa{\lambda}
\def\flecha{\rightarrow}
\def\esiz{\langle}
\def\esde{\rangle}
\def\S{\Sigma}
\def\cte.{\mathop{\rm cte.}\nolimits}
\def\R{\mathbb{R}}
\def\S{\mathbb{S}}
\newfont{\bb}{msbm10 at 12pt}
\titleformat{\subsection}[runin]
{\bfseries} {\thesubsection{.}}{0.15cm}{}[.]
\titleformat{\subsubsection}[runin]
{\em}{\thesubsubsection{.}}{0.15cm}{}[.]
\newtheorem{theorem}{Theorem}[section]
\newtheorem{lemma}[theorem]{Lemma}
\newtheorem{proposition}[theorem]{Proposition}
\newtheorem{remark}[theorem]{Remark}
\newtheorem{corollary}[theorem]{Corollary}
\newtheorem{definition}[theorem]{Definition}
\theoremstyle{definition}
\numberwithin{equation}{section}
\numberwithin{figure}{section}
\begin{document}
\fancyhead[LO]{Elliptic Weingarten surfaces}
\fancyhead[RE]{Isabel Fernández, Pablo Mira}
\fancyhead[RO,LE]{\thepage}

\thispagestyle{empty}

\begin{center}
{\bf \LARGE Elliptic Weingarten surfaces: singularities,\\[0.2cm] rotational examples and the halfspace theorem}
\vspace*{5mm}

\hspace{0.2cm} {\Large Isabel Fernández and Pablo Mira}
\end{center}

%



\footnote[0]{
\noindent \emph{Mathematics Subject Classification}: 53A10, 53C42, 35J15, 35J60. \\ \mbox{} \hspace{0.25cm} \emph{Keywords}: Weingarten surfaces, fully nonlinear elliptic equations, phase space analysis, halfspace theorem, isolated singularities, rotational surfaces.}



\vspace*{7mm}

\begin{quote}
{\small
\noindent {\bf Abstract}\hspace*{0.1cm}
We show by phase space analysis that there are exactly $17$ possible qualitative behaviors for a rotational surface in $\R^3$ that satisfies an arbitrary elliptic Weingarten equation $W(\kappa_1,\kappa_2)=0$, and study the singularities of such examples. As global applications of this classification, we prove a sharp halfspace theorem for general elliptic Weingarten equations of finite order, and a classification of peaked elliptic Weingarten spheres with at most $2$ singularities. In the case that $W$ is not elliptic, we give a negative answer to a question by Yau regarding the uniqueness of rotational ellipsoids.

\vspace*{0.1cm}

}
\end{quote}

%
%
%
%
%
%
%
%


\section{Introduction}

An immersed oriented surface $\Sigma$ in $\R^3$ is called a \emph{Weingarten surface} if its principal curvatures $\kappa_1,\kappa_2$ satisfy a smooth relation 
 \begin{equation}\label{eqw}
 W(\kappa_1,\kappa_2)=0,
 \end{equation} 
for some $W\in C^1(\R^2)$. The relation \eqref{eqw} defines a fully nonlinear PDE when we view $\Sigma$ as a local graph, and we say that the Weingarten equation \eqref{eqw} is \emph{elliptic} if this equation is elliptic. There are several ways of writing \eqref{eqw} in the elliptic case, which will be discussed in detail in Section \ref{sec:prelim}. The most studied elliptic Weingarten surfaces are, of course, constant mean curvature (CMC) surfaces in $\R^3$, and minimal surfaces in particular. Another important example are the surfaces of positive constant Gaussian curvature (CGC).

Elliptic Weingarten surfaces in $\R^3$ represent the natural \emph{fully nonlinear} extension of CMC surface theory. In this sense, it has been an active research field to explore which global results of CMC surface theory extend to the general case of elliptic Weingarten surfaces. Some contributions to this topic can be found in \cite{AEG,A,B,Ch,CF,EM,FGM,GMM,GM3,HW1,Ho0,Ho,JS,rafa,RS,SaTo,SaTo2,T}.

However, there are considerable differences between the geometry of elliptic Weingarten surfaces and CMC surfaces. A fundamental one is that elliptic Weingarten surfaces often present singularities, which can be of very different types. As an example, a parallel surface of a regular elliptic Weingarten surface is also an elliptic Weingarten surface (for a different equation), but in the process some singularities on the surface can be formed. Also, elliptic Weingarten surfaces sometimes admit isolated embedded \emph{conical} singularities (see e.g. \cite{GHM,GJM}), a behavior not possible for CMC surfaces in $\R^3$. In this sense, it is a challenging problem to understand the behavior of singularities of elliptic Weingarten surfaces. 

A main difficulty for the study of such singularities is the lack of examples, which is due to the fully nonlinear nature of the theory, and the unavailability of the usual CMC methods. With this motivation, our objective in this paper is to understand all possible singularities of rotational elliptic Weingarten surfaces, and to extract some general global applications from their study. 

Our study can be seen as a natural continuation of results by R. Sa Earp and E. Toubiana \cite{SaTo,SaTo2}. In these works, they gave a classification of all \emph{complete} rotational elliptic Weingarten surfaces, using a method different from the one that will be used here. These complete examples have been key tools in the development of the global theory of elliptic Weingarten surfaces, see e.g. \cite{EM,RS,SaTo2}. Sa Earp and Toubiana also provided some examples of rotational elliptic Weingarten surfaces that are not complete, due to the existence of singularities; see Theorems 3 and 4 in \cite{SaTo} and Theorem 3 in \cite{SaTo2}. The discussion in the present paper classifies both regular and singular examples, thus completing the work by Sa Earp and Toubiana to a full classification. Previous existence or classification theorems for some particular theories of elliptic Weingarten surfaces (e.g., $a\kappa_1 + b\kappa_2 =c$ with $ab>0$), with or without singularities, appear in \cite{rafa,MO2,P,JS,RS}.

We next describe our results and the organization of the paper.

In Section \ref{sec:prelim} we rewrite the elliptic Weingarten equation in a way that easens the classification result. Specifically, we will view it as a relation $\kappa_2=g(\kappa_1)$, with $g'<0$, between the principal curvatures $\kappa_1\geq \kappa_2$ of the surface. Here, the condition $g'<0$ comes from ellipticity. We will denote by $\cW_g$ the class of all elliptic Weingarten surfaces associated with a particular choice of such a function $g$.

In Section \ref{sec:fases} we develop a phase space analysis for the general study of rotational surfaces of an arbitrary elliptic Weingarten class $\cW_g$. This theoretical tool, inspired in part by \cite{BGM}, also seems adequate for treating the case of non-elliptic Weingarten surfaces. For some works about rotational non-elliptic Weingarten surfaces, see e.g. \cite{BO,CC,KS,rafa,MO1,MO2}.

In Section \ref{sec:existence} we will determine which elliptic Weingarten classes $\cW_g$ admit rotational examples with singularities, and we will describe the geometry of such singularities whenever they occur (Theorem \ref{th:existence}). We will show that if the Weingarten equation is uniformly elliptic, any rotational example within the class $\cW_g$ is complete and regular (Corollary \ref{co:uniform}).

In Section \ref{sec:classification} we will describe the moduli space of all rotational elliptic Weingarten surfaces, in terms of natural geometric parameters. We show that there are exactly 17 possible qualitative behaviors for such surfaces, 10 of them with singularities. Our classification also indicates, for each particular elliptic Weingarten class $\cW_g$, which specific examples appear. See Theorems \ref{th:Kpositiva}, \ref{th:nonminimal}, \ref{th:minimal} and \ref{heica}. See also Figures \ref{fig:esfera_sing}, \ref{fig:unduloideNodoide_sing}, \ref{fig:catenoide_sing} and Subsection \ref{sec:summary} as a summary and visualization of the classification theorem.

In Section \ref{sec:applications} we use the classification in Section \ref{sec:classification} to study the validity of the halfspace theorem for elliptic Weingarten surfaces. The famous Hoffman-Meeks halfspace theorem \cite{HM} shows that any properly immersed minimal surface contained in a halfspace of $\R^3$ must be a plane. Sa Earp and Toubiana gave counterexamples to this property in the more general situation of elliptic Weingarten surfaces with $g(0)=0$. They also gave sufficient conditions on $g$ under which an elliptic Weingarten class $\cW_g$ satisfies the halfspace theorem. Using rotational surfaces with singularities as comparison objects, we will sharpen these results. We will show that the conditions $g(0)=0$ and $g'(0)=-1$ are necessary for the validity of the halfspace property for the ellipitc Weingarten class $\cW_g$ (Corollary \ref{neces}), and that if $g$ has finite order at $0$, these conditions are also sufficient (Theorem \ref{sufici} and Corollary \ref{th:order}).  

In Section \ref{sec:isolated} we study isolated singularities. We prove that any elliptic Weingarten graph defined on a punctured disk is bounded around the puncture (Theorem \ref{th:isosin}), and that any elliptic Weingarten peaked sphere with at most $2$ isolated singularities is a round sphere or a rotational football (Theorem \ref{2sing}).

Finally, in Section \ref{sec:yau} we solve a problem by Yau. Motivated by an example of Chern \cite{Ch0}, Yau asked in \cite{Yau} if any compact surface in $\R^3$ whose principal curvatures satisfy, in some order, the (hyperbolic) Weingarten equation $\kappa_1=c\kappa_2^3$ for some $c>0$, must be a rotational ellipsoid. We will construct a $C^2$ example that gives a negative answer to this question. The bifurcation phenomenon that allows the construction of this example is not possible in the elliptic case. The existence of this example was obtained by an adaptation of the phase space analysis of Section \ref{sec:fases}. However, we have written Section \ref{sec:yau} in an alternative way, so that it can be read independently from the rest of the paper.

The authors are grateful to José A. Gálvez for many valuable discussions during the preparation of this paper.

\section{The elliptic Weingarten equation}\label{sec:prelim}

There are several essentially equivalent ways of describing an elliptic Weingarten equation \eqref{eqw}. This is discussed in detail in the works \cite{GM4,FGM}. In this Section we explain how we will parametrize the Weingarten equation in a suitable way for our purposes.

Let $\Sigma$ be an oriented surface in $\R^3$ whose principal curvatures $\kappa_1\geq \kappa_2$ satisfy a Weingarten relation \eqref{eqw}, with $W\in C^1(\R^2)$. We will assume that \eqref{eqw} is elliptic, i.e., 
\begin{equation}\label{eli}
\frac{\parc W}{\parc k_1}\frac{\parc W}{\parc k_2}>0 \quad \mbox{on}\; \,  W^{-1}(0). 
\end{equation}

Each connected component of $W^{-1}(0)$ gives rise to a different elliptic theory, see \cite{FGM}. By \eqref{eli}, any such component of $W^{-1}(0)$ can be rewritten as  a proper curve in $\R^2$ given by a graph
\begin{equation}\label{eq:g}
\kappa_2=g(\kappa_1), \hspace{1cm} g'<0,
\end{equation}
where $g$ is $C^1$ in some interval of $\R$. Clearly, by the monotonicity of $g$ and the properness of the graph in \eqref{eq:g}, there exists a unique value $\alfa\in \R$ (which we call the \emph{umbilical constant} of the equation) such that $g(\alfa)=\alfa$. 

Recall that we are assuming $\kappa_1\geq \kappa_2$. Hence, the equation \eqref{eq:g} is only meaningful to us when restricted to that set. In this way, since $g$ is decreasing, we will assume that $g$ is defined in some interval $I_g\subset \R$ contained in $[\alfa,\8)$, with $\alfa\in I_g$. So, $g$ is a monotonic bijection from $I_g$ to $J_g:=g(I_g)\subset (-\8,\alfa]$. Thus, by the monotonicity and properness of $g$, there are two possibilities for the intervals $I_g$ and $J_g$; below, we denote $g(\8)={\rm lim} \, g(x)$ as $x\to \8$.

\begin{enumerate}
\item
$I_g=[\alfa,\8)$. In this case, $J_g=(b,\alfa]$, where $b=g(\8)\in (-\8,\alfa)\cup \{-\8\}$. 
\item
$I_g=[\alfa,b)$, $b\in\R$. In this case ${\rm lim}_{x\to b^-} \, g(x)=-\8$ and $J_g=(-\8,\alfa]$.
\end{enumerate}

The value of $\alfa$ has a geometrical meaning. If $\alfa=0$, planes are solutions to \eqref{eq:g}, while if $\alfa\neq 0$, spheres with both principal curvatures equal to $\alfa$ are solutions to \eqref{eq:g}. So, the geometric properties of these two classes of surfaces are different.

A change in the orientation of $\Sigma$ produces a surface satisfying a different elliptic Weingarten equation \eqref{eq:g}, according to the following correspondence:  
\begin{equation}\label{eq:cambiorienta}
(\kappa_1,\kappa_2,g(x),I_g,J_g,\alfa)\mapsto (-\kappa_2,-\kappa_1,-g^{-1}(-x), -J_g,-I_g,-\alfa).
\end{equation}
In particular, up to a change of orientation on the surface $\Sigma$, we can assume that the Weingarten equation \eqref{eq:g} on $\Sigma$ satisfies
\begin{equation}\label{eq:gorient}
I_g=[\alfa,\8). 
\end{equation}
In this respect, let us note that when $I_g=[\alpha,\8)$ and $J_g=(-\infty,\alpha]$, condition \eqref{eq:gorient} is preserved by a change of orientation. However, for $I_g=[\alpha,b)$ or $J_g=(b,\alpha]$, with $b\in\R$, the choice \eqref{eq:gorient} actually fixes an orientation on the surface. 	

We illustrate this formulation of elliptic Weingarten equations with some examples. The CGC equation $\kappa_1\kappa_2 =\alfa^2>0$ corresponds to the choice $g(x)=\alfa^2/x$, defined on $I_g=[\alfa,\8)$. For the CMC equation $\kappa_1+\kappa_2=2\alfa\in \R$, we have $g(x)=2\alfa-x$, defined on $I_g=[\alfa,\8)$. The non-symmetric linear elliptic Weingarten equation $a\kappa_1 + b\kappa_2 =0$ with $a,b>0$ corresponds to $g(x)= -\frac{a}{b}x$, defined on $I_g=[0,\8)$. 

As a more sophisticated example, consider as in \cite{FGM} the elliptic Weingarten equation $2H=K$, which can be rewritten as $W(\kappa_1,\kappa_2):=\kappa_1+\kappa_2 - \kappa_1 \kappa_2 =0$. The set $W^{-1}(0)$ has two connected components, that give rise to two different geometric theories, see \cite{FGM}. We choose the component that passes through the origin (thus, $\alfa=0$), which describes surfaces parallel to minimal surfaces. This component is given by $\kappa_2=g(\kappa_1)$, with $g(x)= \frac{x}{x-1}$, defined at first in $(-\8,1)$. However, the domain of $g$ when we only consider the graph $\kappa_2=g(\kappa_1)$ restricted to the half-plane $\kappa_1\geq \kappa_2$ is, in this case, $I_g=[0,1)$, since $g(0)=0$. Thus, $I_g$ is not of the form \eqref{eq:gorient}. In this situation, by a change of orientation for the equation as described in \eqref{eq:cambiorienta}, the function $g(x)$ transforms into $-g^{-1}(-x)$, that is, into $g(x)= -x/(x+1)$. For this new $g$, the restriction of the graph $\kappa_2=g(\kappa_1)$ to the half-plane $\kappa_1\geq \kappa_2$ is defined this time in $I_g=[0,\8)$, so now \eqref{eq:gorient} holds. It is with this form of the equation that we will work.

In summary, we can redefine the notion of elliptic Weingarten surface in an equivalent way, as follows:

\begin{definition}\label{def:w}
An \emph{elliptic Weingarten surface} is an oriented surface $\Sigma$ in $\R^3$ whose principal curvatures $\kappa_1\geq \kappa_2$ satisfy at every point the relation \eqref{eq:g} for some $g\in C^1(I_g)$, where $I_g=[\alfa,\8)$ and $g(\alfa)=\alfa$ for some $\alfa\in \R$. We also denote $b:=g(\8)\in[-\8,\alfa)$.

We let $\cW_g$ denote the class of all (oriented) elliptic Weingarten surfaces associated to a given function $g$ in these conditions.
\end{definition}

Even though it will not be used in this paper, we remark that a different way of writing the elliptic Weingarten equation is $H=F(H^2-K)$, where $H,K$ are the mean and Gauss curvatures, $F(t)$ is defined in $[0,\8)$ and satisfies the ellipticity condition $4t (F'(t))^2 <1$ for every $t>0$. For example, this is the formulation used by Sa Earp and Toubiana in \cite{SaTo,SaTo2}. The relation with \eqref{eq:g} is discussed in \cite{GM4,FGM}. 

\section{Rotational elliptic Weingarten surfaces: phase space analysis}\label{sec:fases}

Let $\Sigma$ be a rotationally invariant surface in $\cW_g$, see Definition \ref{def:w}, given as the rotation of a curve $\gamma(s)=(x(s),0,z(s))$ around the $z$-axis, where $s$ is the arc-length parameter of $\gamma(s)$ and $x(s)\geq 0$. Let $N$ denote the unit normal of $\Sigma$ with respect to which $\Sigma\in \cW_g$. By changing the orientation of the curve $\gamma$ if necessary, we may assume that $N$ is given along $\gamma(s)$ as $N(\gamma(s))= J \gamma'(s)$, where $J$ denotes the $\pi/2$-rotation in the $y=0$ plane. From here, the principal curvatures of $\Sigma$ are 
\begin{equation}\label{eq:princur}
\mu=\mu(s)=x'(s)z''(s)-x''(s)z'(s),\qquad  \landa=\landa(s) = \frac{z'(s)}{x(s)}.
\end{equation}
Since we are denoting $\kappa_1\geq \kappa_2$, we need to account for the two possibilities that, at each point $p\in \Sigma$, either $(\kappa_1,\kappa_2)=(\mu,\landa)$ or $(\kappa_1,\kappa_2)=(\landa,\mu)$.

As it was shown in \cite{SaTo}, due to ellipticity, either $\Sigma$ is  totally umbilical or it has no umbilical points. In particular, either $\mu\leq \landa$ and $\mu=g(\landa)$ on $\Sigma$, with $\landa\geq \alfa$,
or $\lambda\leq \mu$ and $\mu=g^{-1}(\landa)$ on $\Sigma$, with $\landa\leq \alfa$. 
Thus, $\Sigma$ satisfies the equation 
\begin{equation}\label{eq:mfl}
\mu=f(\landa),
\end{equation}
where $f:I_f  \to I_f$, with $I_f:=I_g\cup J_g$, is given by 
\begin{equation}\label{eq:f}
f|_{I_g} = g \qquad f|_{J_g} = g^{-1} . 
\end{equation}
Note that $I_f$ is an interval containing $\alfa$ in its interior, and $f$ is strictly decreasing and continuous, with $f\circ f= \mbox{Id}$. The function $f$ is $C^1$ in $I_f\setminus \{\alfa\}$ and, at  $\alfa$, $f$ has finite left and right derivatives. From \eqref{eq:gorient}, 
\begin{equation}\label{eq:forient}
I_f=(b,\8), 
\end{equation}
where $b:=g(\8)\in [-\8,\alfa)$.

Using \eqref{eq:princur} and $x'(s)^2+z'(s)^2=1$, we obtain from \eqref{eq:mfl} 
\begin{equation}\label{eq:x2}
z''(s) = f(\landa(s))\, x'(s), \hspace{1cm}
x''(s) = -x(s)\, \lambda(s)\, f(\lambda(s)), 
\end{equation}
and 
\begin{equation}\label{eq:edoraf}
 \landa' = \frac{x'}{x}(f(\landa)-\landa).
\end{equation}
In particular, $(x(s),\landa(s))$ is a solution to the following nonlinear autonomous system on any open interval where $x'(s)\neq 0$ and $x(s)> 0$:
\begin{equation}\label{eq:auto1}
\left\{\def\arraystretch{1.5}\begin{array}{lll} x' & = & \ep \sqrt{1-\landa^2 x^2}, \\ \landa' & = & \ep \sqrt{1-\landa^2 x^2} \, \frac{f(\landa)-\landa}{x}.\end{array} \right.
\end{equation}
Here $\ep ={\rm sign} (x') =\pm 1$. This process can be reversed, so that any solution to \eqref{eq:auto1} with $x(s)>0$ determines a rotational surface $\Sigma$ of the Weingarten class $\cW_g$. Thus, the orbits of \eqref{eq:auto1} will be identified with the profile curves of rotational surfaces in $\cW_g$, on open sets where $x'(s)\neq 0$ and $x(s)>0$.

The phase space of \eqref{eq:auto1} is the region (see Figure  \ref{fig:planodefases})
\begin{equation}\label{eq:R}
 \cR := \{(x,\landa) : x> 0, \; \landa>b, \; \landa^2 x^2 \leq 1\}.
\end{equation}
We will denote by $\Gamma$ the two boundary hyperbolas of $\cR$: 
\begin{equation}\label{eq:Gamma}
 \Gamma := \{(x,\landa) : x> 0,  \; \landa>b, \; \landa^2 x^2 = 1\}.
\end{equation}
Observe that $(x(s_0),\landa(s_0))$ lies in $\Gamma$ if and only if the generating curve $(x(s),z(s))$ of $\Sigma$ has at $s_0$ a point with vertical tangent vector.

\begin{center}\begin{figure}[htbp]
\includegraphics[width=.6\textwidth]{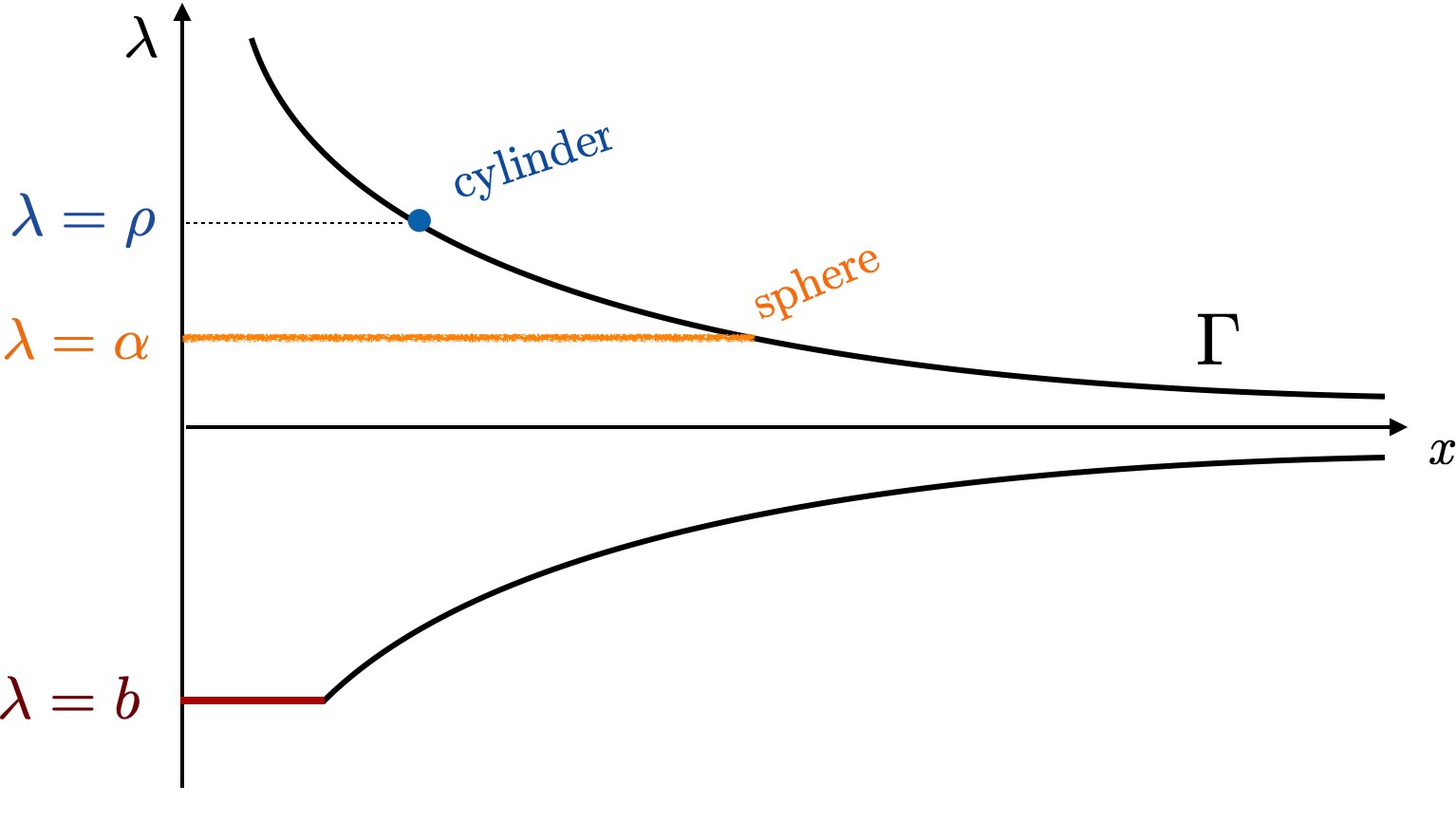}
\caption{The set $\cR$, its hyperbolic boundary $\Gamma$, and the orbits corresponding to spheres and cylinders, in the case $\alfa>0$, $b<0$.\label{fig:planodefases}}
\end{figure}\end{center}

Note that the systems \eqref{eq:auto1} for $\ep=1$ and $\ep=-1$ are equivalent, since they have the same orbits. Indeed, if  $(x(s),\landa(s))$ is a solution for $\ep=1$, then $(x(-s),\landa(-s))$ is a solution for $\ep =-1$. 

\begin{remark}\label{re:simetria}
The above property shows the following \emph{symmetry result} for rotational elliptic Weingarten surfaces. Assume that the generating curve $(x(s),z(s))$ of $\Sigma$ has a point $s_0$ with vertical tangent vector (and therefore $x^2\landa^2 =1$). Then, its associated orbit $(x(s),\landa(s))$ \emph{hits} $\Gamma$ at $s=s_0$, and then {\em bounces back}, following the same trajectory in the opposite sense, but with the sign of $\ep$ reversed. By this argument, we see that $(x(s),z(s))$ extends smoothly across $s_0$, with the sign of $x'(s)$ changing at $s_0$, and that $\Sigma$ is symmetric with respect to the horizontal plane passing through $(x(s_0),0,z(s_0))$.

In this way, the property that an orbit hits the boundary set $\Gamma$ does not produce any geometric complications (e.g. singularities) on $\Sigma$, and such an orbit can be considered to be regular up to $\Gamma$.
\end{remark}

There are two special orbits of this phase diagram that control the behavior of the rest of orbits of \eqref{eq:auto1}. See Figure \ref{fig:planodefases}. 

\begin{itemize}
\item
Let $\alfa\in\R$ be the umbilical constant of $\cW_g$, defined in Section \ref{sec:prelim} by the condition $g(\alfa)=\alfa$. Then, the segment (or half-line) $\cR\cap \{\lambda= \alfa\}$ defines an orbit of \eqref{eq:auto1}. If $\alfa=0$, it corresponds to a plane. If $\alfa\neq 0$, it corresponds to a sphere oriented so that its principal curvatures are given by $\alfa$. Orbits in $\cR$ that lie above $\cR\cap \{\lambda= \alfa\}$ correspond to surfaces for which $\kappa_1=\landa>\kappa_2=\mu$, whereas orbits below it give rise to surfaces with $\kappa_1=\mu>\kappa_2=\landa$. 
 \item 
Assume that there exists a cylinder $\Sigma$ in $\cW_g$. The necessary and sufficient condition for this is that $\alfa\neq 0$ and $b< 0$,  i.e., that $0\in I_f$, see \eqref{eq:forient}. The associated orbit of this cylinder is the point $(1/|\rho|,\rho)\in\Gamma$, where $\rho:=f(0)\in \R$. Since $|\alfa|<|\rho|$ by monotonicity of $f$, the point in $\cR$ corresponding to the cylinder is always in the connected component of $\cR\setminus\{\landa=\alfa\}$ not containing the $x$-axis.  \end{itemize}

Let $(x(s),\landa(s))$ be the  orbit for \eqref{eq:auto1}  associated to some rotational surface $\Sigma\in\cW_g$ different from the sphere or plane of $\cW_g$ and, if it exists, from the cylinder of $\cW_g$. Then, $(x(s),\landa(s))$ is contained in the region $\cR$ and, by solving \eqref{eq:edoraf}, we see that it can be reparametrized as a graph $(x(\landa),\landa)$, where 
\begin{equation}\label{eq:soledo}
x(\landa) =  x_0 \,\exp\int_{\landa_0}^{\landa} \frac{dt}{f(t)-t}, 
\end{equation}
for some $(x_0=x(\landa_0), \landa_0)\in \cR$, $\landa_0\neq \alfa$; here, one should recall that $\landa'(s)=0$ only happens at $\Gamma$, as discussed above. Conversely, given the function $x(\landa)$ as in \eqref{eq:soledo}, the restriction of the graph $(x(\landa),\landa)$  to $\cR$ describes an orbit of \eqref{eq:auto1}, and thus it gives rise to a rotational surface of the class $\cW_g$.

Lemma \ref{lem:xmonot} below describes the geometry of the curves $(x(\landa),\landa)$. See also Figure \ref{fig:reg_monot}.
\begin{center}\begin{figure}[htbp]
\includegraphics[width=\textwidth]{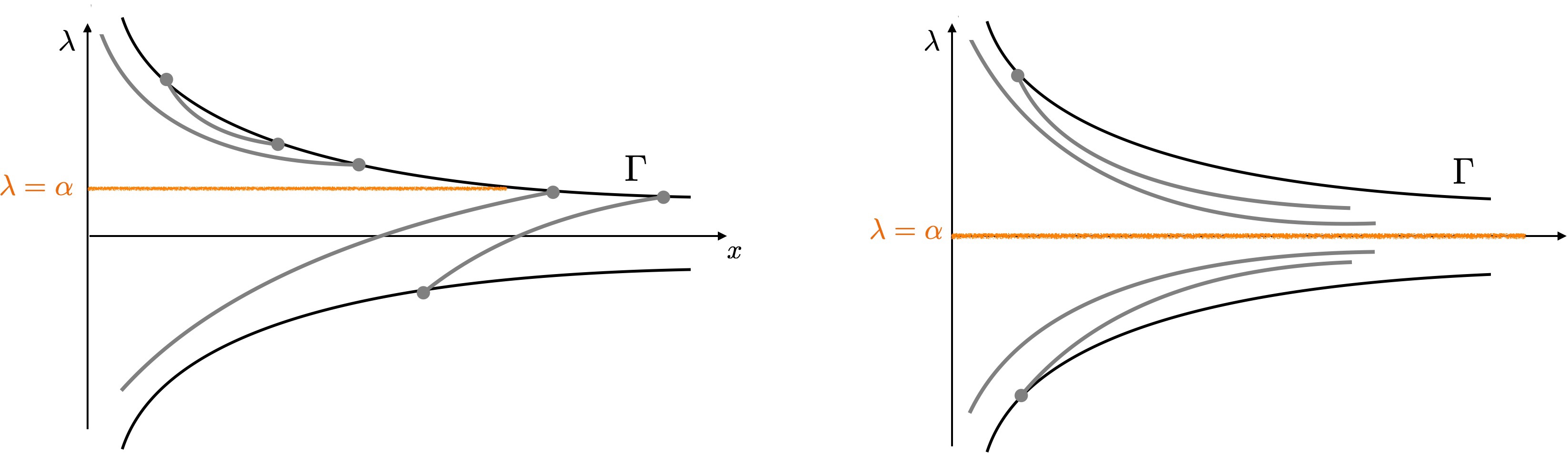}
\caption{Examples of possible behaviours of the orbits $(x(\landa),\landa)$. (Left: $\alfa>0$, right: $\alfa=0$).\label{fig:reg_monot}}
\end{figure}\end{center}
\begin{lemma}\label{lem:xmonot}
For any $(x_0,\landa_0)\in\cR$, $\landa_0\neq \alfa$, the graph $(x(\landa),\landa)$ of the function $x(\landa)$ given by \eqref{eq:soledo} is contained in $x>0$ and does not meet the line $\landa=\alfa$. 
Specifically,  if $\landa_0>\alfa$ (resp. $\landa_0<\alfa$) then  $x(\landa)$ is a decreasing  (resp. increasing) function defined in $(\alfa,\8)$ (resp. $(b,\alfa)$) with $x(\alfa)=\8$. Moreover:
\begin{enumerate}[i)]
\item Whenever $(x(\landa),\landa)$ intersects $\Gamma$, the intersection is transverse, except if $( x_0,\landa_0)=( 1/|\rho|, \rho).$
\item The restriction of the graph $(x(\landa),\landa)$ to $\cR$ is a connected regular curve, except for $(x_0,\landa_0) =( 1/|\rho|, \rho)$, where it consists uniquely of the point $\{(1/|\rho|,\rho)\}$. In particular, the graph $(x(\landa),\landa)$ intersects $\Gamma$ at two points, at most.
\end{enumerate}
\end{lemma}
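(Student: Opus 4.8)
The plan is to analyze the explicit formula \eqref{eq:soledo} directly. First I would record the sign of $f(t)-t$ on $I_f$: since $f$ is strictly decreasing with $f(\alfa)=\alfa$, we have $f(t)-t<0$ for $t>\alfa$ and $f(t)-t>0$ for $t<\alfa$. Hence on $\landa_0>\alfa$ the integrand $1/(f(t)-t)$ is negative, so $x(\landa)$ is strictly decreasing in $\landa$ on $(\alfa,\8)$; on $\landa_0<\alfa$ it is increasing. In both cases $x(\landa)>0$ by the exponential, proving the first sentence. Next I would check the blow-up at $\alfa$: near $t=\alfa$, since $f$ has finite one-sided derivatives at $\alfa$ with $f'(\alfa^{\pm})<0$, we have $f(t)-t \sim c(t-\alfa)$ with $c<0$, so $\int^{\landa}\frac{dt}{f(t)-t}$ diverges to $+\infty$ as $\landa\downarrow\alfa$ (for $\landa_0>\alfa$) giving $x(\alfa)=\8$; similarly from the other side. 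This also shows the graph never meets $\landa=\alfa$. The endpoint behavior at $\8$ (resp. $b$) is just the natural domain of $g$ from Definition \ref{def:w}, so $x(\landa)$ is defined on all of $(\alfa,\8)$ (resp. $(b,\alfa)$), though it need not be bounded there.

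For part (i), intersection with $\Gamma$ means $\landa^2 x(\landa)^2=1$, i.e. $x(\landa)=1/|\landa|$. I would study the auxiliary function $\varphi(\landa):=\landa^2 x(\landa)^2$ on the relevant half-interval (say $\landa>\alfa$, the other case being symmetric). Using $x'(\landa)=x(\landa)/(f(\landa)-\landa)$ one computes
\[
\varphi'(\landa) = 2\landa x(\landa)^2\Bigl(1 + \frac{\landa}{f(\landa)-\landa}\Bigr) = \frac{2\landa x(\landa)^2\, f(\landa)}{f(\landa)-\landa}.
\]
(There is a sign subtlety when $\alfa\le 0$ and $\landa$ can be negative, but one only cares about $\landa$ with $\landa\ne 0$, since on $\landa=0$ the orbit is the plane already excluded; I would note $\varphi>0$ there trivially if it occurs.) So $\varphi'(\landa)=0$ exactly when $f(\landa)=0$, i.e. $\landa=\rho$ in the notation of the paper (when such $\rho$ exists, which needs $0\in I_f$, i.e. $b<0$). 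At $\landa=\rho$ one also has $x(\rho)=1/|\rho|$ by the cylinder orbit being the single point $(1/|\rho|,\rho)$, which I would verify directly: $\varphi(\rho)=\rho^2 x(\rho)^2$ and $\rho$ is the unique zero of $\varphi'$, so $\varphi$ has an interior extremum there; combined with $\varphi\to\8$ at the endpoint $\alfa$ and monotonicity of $\varphi$ on each side of $\rho$, the value of $\varphi$ at $\rho$ is its global minimum on $(\alfa,\8)$, and one checks it equals $1$ precisely because $(1/|\rho|,\rho)$ is the cylinder orbit lying on $\Gamma$. Away from $\landa=\rho$, $\varphi'\ne 0$, so at any point where $\varphi(\landa)=1$ the graph crosses the level set $\varphi=1$ (which is $\Gamma$) transversally. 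When $(x_0,\landa_0)=(1/|\rho|,\rho)$, the orbit is the single point on $\Gamma$, the degenerate case.

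For part (ii): by part (i) and the computation of $\varphi'$, on the half-line $\landa>\alfa$ the function $\varphi$ is strictly monotone on $(\alfa,\rho]$ and on $[\rho,\8)$ with minimum value attained at $\rho$ (or strictly monotone on all of $(\alfa,\8)$ if $\rho$ does not exist, i.e. $b\ge 0$). Since $\varphi(\alfa^+)=\8$, the set $\{\landa:\varphi(\landa)\le 1\}$ — which is exactly where the graph lies in $\cR$ — is a (possibly empty, possibly degenerate) closed subinterval $[\landa_1,\landa_2]$ of $(\alfa,\8)$, hence connected; and the graph restricted to it is a regular arc since $x(\landa)$ is smooth (it is $C^1$ away from $\alfa$, and $\alfa$ is not in the closure of this subinterval). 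The boundary of this subinterval consists of at most two points, at each of which $\varphi=1$, i.e. the graph meets $\Gamma$ in at most two points; if $\varphi(\rho)=1$ exactly and $\landa_0=\rho$ the interval degenerates to $\{\rho\}$. The symmetric argument handles $\landa_0<\alfa$. \textbf{The main obstacle} I anticipate is bookkeeping the sign of $\varphi'$ when $\alfa\le 0$ so that $\landa$ ranges over negative values too (the factor $2\landa$ changes sign at $0$, but $0$ corresponds to the excluded plane orbit, so one must argue the graph under consideration never reaches $\landa=0$ — which follows because $f(0)-0\ne 0$ would be needed for the orbit through a point with $\landa=0$ to be nonconstant, but actually $\landa=\alfa$ is the only constant orbit, so $\landa=0$ with $\alfa\ne 0$ is fine and $\varphi>0$ there automatically; if $\alfa=0$ then the line $\landa=0$ is itself the excluded plane orbit and the graph stays strictly on one side). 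Carefully disentangling these cases, and confirming $\varphi(\rho)=1$ from the identification of the cylinder orbit in the text, is the delicate part; everything else is direct differentiation of \eqref{eq:soledo}.
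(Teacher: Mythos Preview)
Your approach is essentially the same as the paper's: both analyze the product $\lambda x(\lambda)$ and its critical points. The paper works with $\lambda x(\lambda)$ directly and computes
\[
\frac{d}{d\lambda}\bigl(\lambda\, x(\lambda)\bigr)=x(\lambda)\,\frac{f(\lambda)}{f(\lambda)-\lambda},
\]
which vanishes only at $\lambda=\rho$; you square it to $\varphi=\lambda^2 x(\lambda)^2$, which introduces the extra factor $2\lambda$ and hence a spurious critical point at $\lambda=0$. You correctly flag this as ``the main obstacle'', and indeed it causes no real trouble (since on $\Gamma$ one has $\lambda\neq 0$, and $\varphi(0)=0<1$ keeps the sublevel set connected), but working with $\lambda x(\lambda)$ instead of its square eliminates the case analysis entirely.

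Two points to tighten. First, your sentence ``At $\lambda=\rho$ one also has $x(\rho)=1/|\rho|$'' is written as if it held for every orbit, which is false; it holds only for the orbit through $(1/|\rho|,\rho)$, and your later ``one checks it equals $1$ precisely because \ldots'' suggests you know this, but the exposition should be cleaned up. Second, your claim that $\varphi(\alpha^+)=\infty$ is \emph{false} when $\alpha=0$: there $\lambda\to 0$ and $x(\lambda)\to\infty$ compete, and in fact $\lambda x(\lambda)\to 0$ (this is exactly the computation $\cG(0)=0$ done later in the paper). Fortunately when $\alpha=0$ there is no $\rho$ in either half-interval, so $\lambda x(\lambda)$ is strictly monotone there and connectedness is immediate; but your argument as written does not cover this case.
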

\begin{proof} 
The monotonicity of $x(\landa)$ follows from the fact that $f$ is decreasing and $f(\alfa)=\alfa$. Besides,
since $f$ has finite left and right derivatives at $\alfa$, $$
\frac{f(\landa)-\landa}{\landa-\alfa}$$ is bounded between two negative constant around $\landa=\alfa$. Thus, the function $\landa\mapsto\int_{\landa_0}^\landa dt/(f(t)-t)$ diverges to $\8$ as $\landa$ approaches $\alfa$, what proves $x(\alfa)=\8$.

For $i)$, let $(x_0,\landa_0)\in \Gamma$, and view the component of $\Gamma$ that contains this point as the graph $x=\ep' /\landa$, where $\ep'\in \{0,1\}$ is the sign of $\landa_0$. A direct computation from \eqref{eq:soledo} with initial point $(x_0,\landa_0)$ shows that $x'(\landa_0)\neq -\ep' /\landa_0^2$ unless $\landa_0=\rho$ (recall that $\rho=f(0)$ and $f\circ f={\rm Id}$).
This proves $i)$.

For $ii)$, note that if $(x_0,\landa_0)\neq (1/|\rho|,\rho)$, there exists $\landa_1$ close to $\landa_0$ such that $|\landa_1\, x(\landa_1)|<1$. Also, as  $x(\alfa)=\8$,  for $\landa$ close enough to $\alfa$ we have $|\landa\, x(\landa)|>1$; thus, $(x(\landa),\landa)$ intersects $\Gamma$. If the restriction to $\cR$ of $(x(\landa),\landa)$  had more than one connected component, there would be at least two different points at which the derivative of the function $\landa\, x(\landa)$ vanishes. But 
\begin{equation}\label{monang}
\frac{d}{d\landa} (\landa\, x(\landa) )= x(\landa) \frac{f(\landa)}{f(\landa)-\landa},
\end{equation}
which vanishes only at $\landa=\rho$. So, the restriction of $(x(\landa),\landa)$ to $\cR$ is a connected curve. As a consequence of the above expression, we also deduce that $|\landa \, x(\landa)|$ attains a minimum at $\landa=\rho$, proving that for $(x_0,\landa_0) = (1/|\rho|,\rho)$ the graph of $x(\landa)$ stays {\em outside} $\cR$ except for that point. 
\end{proof}

\begin{remark}\label{re:extreme}
As a consequence of Lemma \ref{lem:xmonot}, any point $(x_0,\landa_0)\in \Gamma$, with $\landa_0\neq \rho$ if $b<0$, is the endpoint of exactly one orbit of \eqref{eq:auto1} in $\cR$, and this orbit intersects $\Gamma$ transversely.  
\end{remark}

\section{Singularities of rotational elliptic Weingarten surfaces}\label{sec:existence}

\subsection{Types of singularities}
Let $\Sigma$ be a rotational surface in $\cW_g$  given as the rotation of an arc-length parameterized curve $\gamma(s)=(x(s),0,z(s))$ around the $z$-axis. We will assume that $\gamma(s)$ is defined in some interval $\mathcal{J}\subset \R$, but that $\Sigma$ cannot be extended to a regular surface as $s\to a$, where $a\in \R$ is one of the endpoints of $\mathcal{J}$. That is, the surface $\Sigma$ becomes \emph{singular} as $s\to a$.

Due to our previous study of the phase plane of \eqref{eq:auto1}, the orbit associated to $\Sigma$ must approach the boundary of $\cR$ as $s\to a$. More specifically, as we know that $\gamma(s)$ can be smoothly extended if $(x(s),\landa(s)) \to p_0\in \Gamma$ as $s\to a$ (see Remark \ref{re:simetria}), there are only two possible ways in which a singularity can happen: either $x(s)\to 0$ as $s\to a$, or $b\neq -\8$ and $\landa(s)\to b$ as $s\to a$. The first case corresponds to an isolated singularity created as the surface touches its rotational axis. In the second one, the surface is singular along a circle, since $x(b)>0$ in \eqref{eq:soledo}. In both situations, the surface is a graph near the singularity.

\subsection{The function $\cG(\landa)$} We next define a function that will control the existence of singularities of elliptic Weingarten surfaces. In the proof of Lemma \ref{lem:xmonot}, we showed that the behavior of  $\landa \,  x(\landa)$ plays an important role in the study of the phase space of \eqref{eq:auto1}. In this spirit, we consider the function $\cG: (b,\8)\setminus\{\alfa\} \to\R$ given by 
\begin{equation}\label{eq:G}
\cG(\landa) = \landa\,  \exp \cF(\landa) ,
\end{equation}
where $\cF(\landa)$ is any primitive of $1/(f(\landa)-\landa)$. Note that $\cG(\landa)$ is defined up to a positive multiplicative constant on each of the two connected components of $(b,\8)\setminus\{\alfa\}$.

\begin{remark}\label{re:G}
If $x(\landa)$ is given by \eqref{eq:soledo} for some $(x_0,\landa_0)$, with $\landa_0\neq \alfa$, the function  $\landa\,   x(\landa)$ and the restriction of $\cG(\landa)$ to the interval of $(b,\8)\setminus\{\alfa\}$ that contains $\landa_0$ differ by a multiplicative positive constant: 
\begin{equation}\label{eq:Gx}
\frac{\landa\,   x(\landa)}{\landa_0\,   x(\landa_0)} = \frac{\cG(\landa)}{\cG(\landa_0)}
\end{equation}
In particular, they have the same monotonicity. The same holds for $|\landa\,   x(\landa)|$ and $|\cG(\landa)|$. 
\end{remark}
By a direct computation,
\begin{equation}\label{deg}
\cG'(\lambda)=\frac{f(\lambda)}{f(\lambda)-\lambda} \exp \cF(\landa).
\end{equation}
This equation allows to study the monotonicity regions of $\cG$, as described in the following lemma. See also Figure \ref{fig:G}.

\begin{lemma}\label{lem:Gmonot}
The function $\cG: (b,\8)\setminus\{\alfa\}\to \R$ defined in \eqref{eq:G} satisfies: 

\begin{enumerate}
\item If $b\geq 0$, then $\cG(\landa)$ is increasing for $\landa \in (b,\alfa)$ and decreasing in $(\alfa,\8)$. 
\item If $b<0$ and $\alfa\neq 0$, then $\cG(\landa)$ is decreasing for $\landa$ between $\alfa$ and $\rho=f(0)$, and increasing otherwise. Also, $\cG(\alfa)=\pm\8$, where $\pm = \rm{sign}(\alfa) $.
\item If $\alfa =0$, $\cG$ is increasing on $I_f$, and $\cG(0)=0$. 
\end{enumerate}
In particular, $\cG(b)=\lim_{\landa\to b}\cG(\landa)$ and $\cG(\8)=\lim_{\landa\to\8}\cG(\landa)$ exist (although they could be infinite). If $b\neq -\8$,  $\cG(b)$ is  finite. 
\end{lemma}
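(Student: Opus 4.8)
Everything reduces to a sign analysis based on the already computed formula \eqref{deg}. Since $\exp\cF(\landa)>0$, the sign of $\cG'(\landa)$ equals the sign of $f(\landa)/(f(\landa)-\landa)$, that is, the product of the signs of $f(\landa)$ and of $f(\landa)-\landa$. So the plan is: first read off these two signs on each subinterval of $(b,\8)\setminus\{\alfa\}$; then treat the boundary value of $\cG$ at $\alfa$ (and its value at $0$ when $\alfa=0$) via the local behaviour of $f$ there; and finally deduce the statements about $\cG(b)$ and $\cG(\8)$ from monotonicity together with an integrability estimate near $b$. The two basic signs are immediate from the order structure of $I_f$: since $f$ is strictly decreasing with $f(\alfa)=\alfa$, one has $f(\landa)-\landa>0$ exactly for $\landa<\alfa$ and $f(\landa)-\landa<0$ exactly for $\landa>\alfa$; and for $f$ itself, if $b\ge0$ then $I_f=(b,\8)\subset(0,\8)$ and $f(I_f)=I_f$ (as $f\circ f=\mathrm{Id}$), so $f>0$ on all of $I_f$, while if $b<0$ then $0\in I_f$ and $f$ vanishes exactly at $\rho=f(0)$, with $f(\landa)>0$ for $\landa<\rho$ and $f(\landa)<0$ for $\landa>\rho$, and $\rho$ lying on the same side of $\alfa$ as $0$ by monotonicity of $f$.

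Multiplying these signs on the relevant intervals reproduces, case by case, exactly the monotonicity asserted in (1)--(3). If $b\ge0$ (so $\alfa>0$), then $f>0$ throughout, so $\cG'$ has the sign of $f(\landa)-\landa$: positive on $(b,\alfa)$, negative on $(\alfa,\8)$, giving (1). If $\alfa=0$ (so $b<0$ and $\rho=\alfa=0$), on $(b,0)$ both factors are positive and on $(0,\8)$ both are negative, so $\cG'>0$ throughout $I_f\setminus\{0\}$, giving (3) (modulo the value $\cG(0)=0$, below). If $b<0$ and $\alfa\ne0$, then $\rho\ne\alfa$, the only zero of $\cG'$ is at $\landa=\rho$ (where $f(\landa)=0$), and checking signs on the three intervals determined by $\alfa$ and $\rho$ shows $\cG$ is decreasing on the interval between $\alfa$ and $\rho$ and increasing elsewhere, giving (2).

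For the boundary value at $\alfa$: as in the proof of Lemma \ref{lem:xmonot}, $(f(\landa)-\landa)/(\landa-\alfa)$ stays between two negative constants near $\alfa$, so $\int^\landa dt/(f(t)-t)\to+\8$ as $\landa\to\alfa$ from either side; hence $\exp\cF(\landa)\to+\8$, and therefore $\cG(\landa)=\landa\exp\cF(\landa)\to\mathrm{sign}(\alfa)\cdot\8$ when $\alfa\ne0$, which is the claim $\cG(\alfa)=\pm\8$ in (1)--(2). When $\alfa=0$ one needs the sharper estimate: writing $m_\pm:=f'(0^\pm)$, which are finite by hypothesis, the relation $f\circ f=\mathrm{Id}$ forces $m_+m_-=1$, so $m_\pm<0$ and $f(\landa)-\landa=(m_\pm-1)\landa+o(\landa)$ near $0$ with $m_\pm-1<-1$. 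Integrating gives $\cF(\landa)=\tfrac{1}{m_\pm-1}\log|\landa|+O(1)$, hence $\cG(\landa)=\mathrm{sign}(\landa)\,|\landa|^{m_\pm/(m_\pm-1)}e^{O(1)}$ with exponent $m_\pm/(m_\pm-1)>0$, so $\cG(\landa)\to0$. Thus $\cG(0)=0$ (independently of the choice of primitive on each side of $0$), and since $\cG<0$ on $(b,0)$ and $\cG>0$ on $(0,\8)$, the monotonicity from case (3) extends across $0$: $\cG$ is increasing on all of $I_f$.

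Finally, the three cases just established show that $\cG$ is monotone on a one-sided neighbourhood of $b$ (increasing in all cases) and on a neighbourhood of $+\8$ (decreasing in case (1), increasing in (2)--(3)), so the one-sided limits $\cG(b)$ and $\cG(\8)$ exist in $[-\8,+\8]$. If moreover $b\in\R$, then since $f\colon(b,\8)\to(b,\8)$ is a decreasing bijection, $f(t)\to+\8$ as $t\to b^+$, so $f(t)-t$ is bounded below by a positive constant near $b$; hence $1/(f(t)-t)$ is bounded, and thus integrable, on a bounded interval $(b,\landa_0]$, so $\cF$ has a finite limit at $b^+$ and $\cG(b)=b\exp\cF(b^+)$ is finite. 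The only step here that is not routine bookkeeping is the local analysis at $\alfa$ when $\alfa=0$: one must extract from the finiteness of the one-sided derivatives of $f$ (via $m_+m_-=1$, hence $m_\pm<0$) that the exponent $m_\pm/(m_\pm-1)$ is \emph{strictly} positive, equivalently that $\exp\cF(\landa)=o(1/|\landa|)$ near $0$, which is exactly what forces $\cG(0)=0$ rather than merely $\cG$ bounded near $0$.
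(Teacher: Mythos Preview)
Your approach is essentially the paper's: read off the sign of $\cG'$ from \eqref{deg} on each subinterval, handle $\landa\to\alfa$ via the divergence of $\cF$, and handle $\landa\to b$ via boundedness of $1/(f(\landa)-\landa)$ there. Two small points need fixing.

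First, the parenthetical ``$\rho$ lying on the same side of $\alfa$ as $0$'' is backwards: since $f$ is strictly decreasing with $f(\alfa)=\alfa$, the points $0$ and $\rho=f(0)$ lie on \emph{opposite} sides of $\alfa$. This slip does not affect your subsequent case analysis, which is correct.

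Second, and more substantively, in the $\alfa=0$ case the step from $f(\landa)-\landa=(m_\pm-1)\landa+o(\landa)$ to $\cF(\landa)=\tfrac{1}{m_\pm-1}\log|\landa|+O(1)$ is not justified: the error term one must integrate is of order $o(1)/\landa$, and $o(1)$ alone does not guarantee integrability near $0$ (e.g.\ if the $o(1)$ behaves like $1/|\log\landa|$, the integral diverges like $\log|\log\landa|$). The paper sidesteps this by using two-sided linear bounds $c_1\landa\le f(\landa)\le c_2\landa$ with $c_1<c_2<0$, valid near $0$ because $f$ has finite negative one-sided derivatives there; integrating these comparison inequalities gives $A_1|\landa|^{b_1}\le|\cG(\landa)|\le A_2|\landa|^{b_2}$ with $b_i>0$, hence $\cG(0)=0$. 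Your argument is easily repaired in the same way: just replace the exact asymptotic by such bounds (any $c_2\in(m_\pm,0)$ and $c_1<m_\pm$ will do).
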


\begin{proof}
All the monotonicity properties in the statement, as well as the existence of the limits of $\cG$ at $\landa=\8$ and $\landa=b$, follow directly from \eqref{deg} and a case by case study, using that $f$ is decreasing, and $f(\alfa)=\alfa$. It remains to analyze $\cG$ around $\alfa$.

If $\alfa\neq 0$, the behaviour of $\cG$ at $\alfa$ follows from \eqref{eq:Gx} and the fact that $ x(\alfa)=\8$ (Lemma \ref{lem:xmonot}). If $\alfa=0$, it is easy to check that $\cG(\landa)<0$ and $\cG'(\landa)>0$ for $\landa<0$, whereas $\cG(\landa)>0$ and $\cG'(\landa)>0$ for $\landa>0$, what ensures that the lateral limits of $\cG(\landa)$ at $\landa=0$ exist and are finite. Moreover, as $f(\landa)$ has finite, negative derivatives at $\landa=0$ for $\landa\to 0^+$ and $\landa\to 0^-$, we have 
$$ c_1\landa\leq f(\landa) \leq c_2\landa $$
near the origin, for suitable constants $c_1,c_2 <0$, from where we obtain from \eqref{eq:G}
$$A_1 \landa^{b_1} \leq \cG(\landa) \leq A_2\landa^{b_2},$$
where $A_i,b_i>0$, $i=1,2$. In particular,  $\cG(0^+)=0=\cG(0^-)$.

Finally, if $b\neq -\8$, the function $1/(f(\landa)-\landa))$ extends continuously to $\landa=b$ (with value $0$), and so $\cG(b)$ is finite.
\end{proof}

\begin{center}\begin{figure}[hbtp]
\includegraphics[width=.9\textwidth]{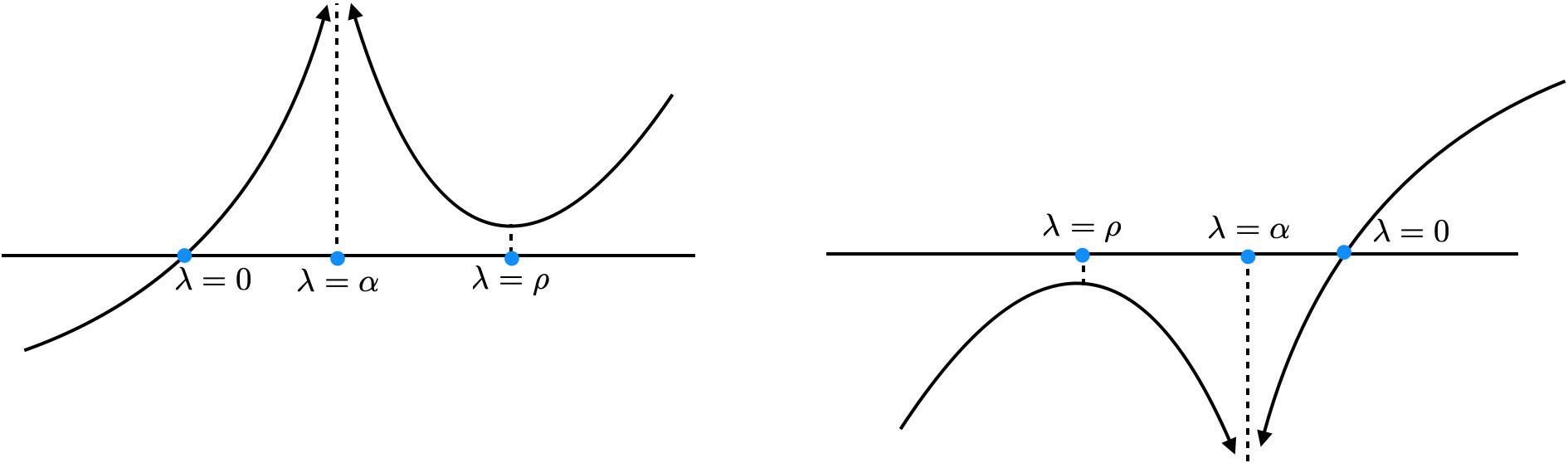}
\caption{Monotonocity of $\cG(\landa)$ when $b<0$. 
 Left: when $\alfa>0$. Right: when $\alfa<0$. }\label{fig:G}
\end{figure}\end{center} 

\subsection{A characterization of existence of singularities} By our discussion in Section \ref{sec:fases}, we know that the non-horizontal orbits of \eqref{eq:auto1} are the restriction to $\cR$ of graphs $(x(\landa),\landa)$ given by \eqref{eq:soledo}, and that the horizontal orbit $\{\landa=\alfa\}\cap \cR$ corresponds to the totally umbilic example in the class $\cW_g$, which does not have singularities. This justifies the following definition.

\begin{definition}\label{def:rsin}
Let $\cR_{\rm sing}$ be the set of points $(x_0,\landa_0)\in \cR$ such that the orbit of \eqref{eq:auto1} that passes through $(x_0,\landa_0)$ corresponds to an elliptic Weingarten surface that has some singularity. For any such $(x_0,\landa_0)\in \cR_{\rm sing}$, we necessarily have $\landa_0\neq \alfa$, and so we can decompose 
$$\cR_{\rm sing}= \cR_{\rm sing}^+\cup \cR_{\rm sing}^- , \hspace{0.5cm} \cR_{\rm sing}^+ := \cR_{\rm sing}\cap \{\landa>\alfa\}, \hspace{0.2cm} \cR_{\rm sing}^- := \cR_{\rm sing}\cap \{\landa<\alfa\}.$$
\end{definition}

By this definition, a class $\cW_g$ of elliptic Weingarten surfaces has rotational examples with singularities if and only if $\cR_{\rm sing}$ is non-empty. We next characterize this condition. In the theorem below, $b$ is defined as usual by $b:=g(\8)\in [-\8,\alfa)$, and $\cG:(b,\8)\setminus \{\alfa\}\flecha \R$ is the function introduced in \eqref{eq:G}. Also, define $\cR^+:=\cR\cup \{\landa>\alfa\}$ and $\cR^- :=\cR\cup \{\landa<\alfa\}$.

\begin{theorem}\label{th:existence}
Let $\cW_g$ be a class of elliptic Weingarten surfaces. Then:
\begin{enumerate}
\item
The set $\cR_{\rm sing}^+$ is non-empty if and only if $|\cG(\8)|<\8$, and in that case, $$\cR_{\rm sing}^+ =\left\{(x_0,\landa_0)\in \cR^+ : |\landa_0 x_0|\leq \frac{|\cG(\landa_0)|}{|\cG(\8)|}\right\}.$$
\item
The set $\cR_{\rm sing}^-$ is non-empty if and only if $|\cG(b)|<\8$, and in that case, $$\cR_{\rm sing}^- =\left\{(x_0,\landa_0)\in \cR^- : |\landa_0 x_0|\leq \frac{|\cG(\landa_0)|}{|\cG(b)|}\right\}.$$
\end{enumerate}
\end{theorem}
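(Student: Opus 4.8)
The plan is to translate the qualitative condition ``the orbit through $(x_0,\landa_0)$ yields a singular surface'' into the analytic behavior of the single function $\cG$, using the reparametrization \eqref{eq:soledo} of orbits as graphs $(x(\landa),\landa)$. By the discussion in Subsection 4.1, the surface associated to an orbit in $\cR$ (other than the umbilical one) is singular precisely when the orbit, followed in the direction of decreasing $|\landa x|$, leaves $\cR$ through the part of $\partial\cR$ that is \emph{not} $\Gamma$: that is, either $x(\landa)\to 0$ or $\landa\to b$ (with $b\neq-\8$). So the first step is to combine Lemma \ref{lem:xmonot} with Remark \ref{re:G} to describe, for a fixed orbit through $(x_0,\landa_0)$ with $\landa_0\neq\alfa$, how the quantity $|\landa\,x(\landa)|$ evolves. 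By \eqref{eq:Gx}, along the orbit we have $|\landa\,x(\landa)| = |\landa_0 x_0|\cdot |\cG(\landa)/\cG(\landa_0)|$, so the orbit stays inside $\cR$ as long as this stays $\leq 1$, and the surface fails to be singular iff the orbit instead runs into $\Gamma$ (where $|\landa x|=1$) before escaping. Thus I would work entirely in the $\cR^+$ case; the $\cR^-$ case is symmetric and handled identically (or by the orientation-reversal symmetry \eqref{eq:cambiorienta}), so it suffices to treat part (1).

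\textbf{Step 1: reduce to a monotonicity statement about $|\cG|$ on $(\alfa,\8)$.}
Fix $(x_0,\landa_0)\in\cR^+$, so $\landa_0>\alfa$; by Lemma \ref{lem:xmonot} the orbit is the graph of a decreasing function $x(\landa)$ on $(\alfa,\8)$ with $x(\alfa)=\8$. On $(\alfa,\8)$, $\cG$ does not vanish, so $|\cG|$ makes sense and has the monotonicity described in Lemma \ref{lem:Gmonot}: it is monotone on each of the two subintervals determined by $\rho=f(0)$ (when $b<0$), decreasing from $|\cG(\alfa)|=+\8$ to $|\cG(\rho)|$ and then increasing to $|\cG(\8)|$; when $b\geq 0$ (so no cylinder and $\rho$ absent) it is simply decreasing on all of $(\alfa,\8)$ toward $|\cG(\8)|$, which is then finite. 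Following the orbit toward the singular side means letting $\landa\to\8$ (this is the direction in which $x(\landa)\to 0$, by Lemma \ref{lem:xmonot}; the generating curve closes up on the axis), so the surface is singular iff, running $\landa$ from $\landa_0$ up to $\8$, the orbit never meets $\Gamma$, i.e. $|\landa\,x(\landa)|<1$ for all $\landa\in[\landa_0,\8)$ — equivalently, using \eqref{eq:Gx}, iff $|\cG(\landa)| < |\cG(\landa_0)|/|\landa_0 x_0|$ for all such $\landa$. Since $|\cG|$ is eventually increasing toward its supremum $|\cG(\8)|$ on $[\landa_0,\8)$ (and in the $b\geq 0$ case is decreasing, so the sup on $[\landa_0,\8)$ is $|\cG(\landa_0)|$ and the condition is automatically satisfiable for small $|\landa_0 x_0|$), this ``never exceeds'' condition becomes $|\cG(\8)|\le |\cG(\landa_0)|/|\landa_0 x_0|$, and in particular forces $|\cG(\8)|<\8$.

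\textbf{Step 2: assemble the two directions and the boundary/limiting cases.}
If $|\cG(\8)|=\8$ then by the above no point of $\cR^+$ can avoid $\Gamma$ as $\landa\to\8$, so $\cR_{\rm sing}^+=\0$; this gives one direction. Conversely, if $|\cG(\8)|<\8$, then any $(x_0,\landa_0)\in\cR^+$ with $|\landa_0 x_0|\le |\cG(\landa_0)|/|\cG(\8)|$ satisfies, by the monotonicity of $|\cG|$ on $[\landa_0,\8)$ recorded in Step 1, the inequality $|\landa\,x(\landa)|\le 1$ for all $\landa\ge\landa_0$, hence the orbit stays in $\cR$, $x(\landa)\to 0$, and the surface is singular; thus the displayed set is contained in $\cR_{\rm sing}^+$. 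For the reverse inclusion, if $|\landa_0 x_0| > |\cG(\landa_0)|/|\cG(\8)|$ then $\sup_{\landa\ge\landa_0}|\landa\,x(\landa)| = |\landa_0 x_0|\,|\cG(\8)|/|\cG(\landa_0)| > 1$, so the orbit meets $\Gamma$ (by the intermediate value theorem, since $|\landa_0 x_0|\le 1$) and by Remark \ref{re:simetria} the surface extends regularly there — it is not this branch that is singular — so $(x_0,\landa_0)\notin\cR_{\rm sing}^+$; here one must also note that the orbit through such a point does not produce a singularity on its \emph{other} end either, since on $[\landa_0,\8)$ we have already reached $\Gamma$ and bounced, and toward $\alfa$ the curve $x(\landa)\to\8$ corresponds to the surface opening out (no singularity), a point that follows from the analysis in Subsection 4.1. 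The one subtlety is the endpoint case of equality $|\landa_0 x_0| = |\cG(\landa_0)|/|\cG(\8)|$, where $|\landa\,x(\landa)|\to 1$ only in the limit $\landa\to\8$; here the orbit stays strictly inside $\cR$ for finite $\landa$ and escapes through $x=0$, so the surface is indeed singular and the point belongs to $\cR_{\rm sing}^+$, consistent with the non-strict inequality ``$\le$'' in the statement. Part (2) follows verbatim with $\landa\to b$, $\cG(b)$ in place of $\landa\to\8$, $\cG(\8)$, using that $\cG(b)$ is finite whenever $b\neq-\8$ (Lemma \ref{lem:Gmonot}) so the only obstruction to $\cR_{\rm sing}^-\neq\0$ is $|\cG(b)|=\8$, which happens exactly when $b=-\8$ and the limit diverges, or more precisely when $|\cG(b)|=\8$.

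\textbf{Main obstacle.}
The routine part is the monotonicity bookkeeping; the genuinely delicate point is Step 2's claim that meeting $\Gamma$ means ``no singularity on that side'' while escaping through $x=0$ (or $\landa\to b$) means ``singularity,'' i.e. correctly matching each of the two ends of a maximal orbit to either a regular closing-up (at $\Gamma$, via the reflection in Remark \ref{re:simetria}), a regular opening-out (as $\landa\to\alfa$, $x\to\8$), or a genuine singular end ($x\to 0$ or $\landa\to b$). One must be careful that an orbit has two ends and that $\cR_{\rm sing}$ is defined by the existence of \emph{some} singularity on the associated complete (reflected) surface; verifying that the side toward $\alfa$ never contributes a singularity, and that the classification of the $\landa\to\8$ (resp. $\landa\to b$) end as ``singular'' is exactly captured by the inequality with $|\cG(\8)|$ (resp. $|\cG(b)|$), is where the argument needs the full force of Lemma \ref{lem:xmonot}, Lemma \ref{lem:Gmonot}, and the singularity dichotomy of Subsection 4.1.
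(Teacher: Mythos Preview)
Your proposal is correct and follows essentially the same route as the paper: translate ``orbit through $(x_0,\landa_0)$ is singular'' into ``$|\landa\,x(\landa)|$ stays $\leq 1$ for all $\landa$ beyond $\landa_0$ toward the relevant endpoint'', then use \eqref{eq:Gx} to rewrite this as a condition on $\cG$, and finally invoke the monotonicity of $|\cG|$ from Lemma~\ref{lem:Gmonot} to reduce it to the single inequality involving $|\cG(\8)|$ (resp.\ $|\cG(b)|$). Your handling of the two directions, the equality boundary case, and the observation that $\cR_{\rm sing}^+\neq\0$ forces $|\cG(\8)|<\8$ all match the paper's argument.

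Two small points of phrasing, neither of which is a real gap. First, when $\alfa\neq 0$ your description of the $\alfa$-side end as ``$x(\landa)\to\8$, the surface opens out'' is not quite what happens: the orbit actually meets $\Gamma$ at some $\landa^*\in(\alfa,\landa_0]$ (since $|\landa x(\landa)|\to\8$ as $\landa\to\alfa$) and the surface reflects there by Remark~\ref{re:simetria}; only when $\alfa=0$ does the orbit genuinely run off to $x=\8$. Either way your conclusion that no singularity appears on that side is correct. Second, your formula $\sup_{\landa\geq\landa_0}|\landa x(\landa)|=|\landa_0 x_0|\,|\cG(\8)|/|\cG(\landa_0)|$ in the reverse inclusion implicitly uses that $|\cG(\landa_0)|<|\cG(\8)|$; this is indeed forced by the hypotheses $|\landa_0 x_0|\leq 1$ and $|\landa_0 x_0|>|\cG(\landa_0)|/|\cG(\8)|$, but it is worth making explicit that the alternative case $|\cG(\landa_0)|\geq|\cG(\8)|$ is vacuous here.
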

\begin{proof}
As we already discussed at the start of Section \ref{sec:existence}, any orbit generates an elliptic Weingarten surface with singularities if and only if it approaches $x=0$ or $\landa =b$ in $\cR$. Thus, since $b<\alfa$, it follows from item ii) in Lemma \ref{lem:xmonot} that the orbits $(x(\landa),\landa)$ in $\cR_{\rm sing}^+$ are exactly those for which $\landa>\alfa$ and $|\landa x(\landa)|<1$ for all $\landa>\landa_0$, where $(x_0,\landa_0)\in \cR^+$ is a point of the orbit. In a similar way, the orbits in $\cR_{\rm sing}^-$
are those that satisfy $|\landa x(\landa)|<1$ for all $\landa \in (b,\landa_0)$, where $(x_0,\landa_0)\in \cR^-$. To see this one should note that, by \eqref{eq:soledo}, $x(\landa)\to 0$ happens only if $\landa \to \pm \8$.

In addition, it follows from Remark \ref{re:G} that, for an orbit $(x(\landa),\landa)$ passing through $(x_0,\landa_0)$, the condition $|\landa x(\landa)|<1$ holds for some $\landa$ if and only if $$|\cG(\landa)|<\frac{|\cG(\landa_0)|}{|\landa_0 x_0|}.$$

We now take limits in this expression as $\landa\to \8$ or $\landa\to b$. By the discussion above and the monotonicity properties of $\cG$ at $\landa=\8$ and $\landa=b$ described in Lemma \ref{lem:Gmonot}, we deduce that if $(x_0,\landa_0)\in \cR_{\rm sing}^+$ (resp. $(x_0,\landa_0)\in \cR_{\rm sing}^-$), then $\cG(\8)$ (resp. $\cG(b)$) has a finite value, and  
\begin{equation}\label{fineq}
 |\landa_0 x_0| \leq \frac{|\cG(\landa_0)|}{|\cG(\8)|} \hspace{0.5cm} \left(\text{resp.} \  |\landa_0 x_0| \leq \frac{|\cG(\landa_0)|}{|\cG(b)|}\right).
 \end{equation}

We next prove the converse inclusion for the first assertion of Theorem \ref{th:existence}; the same proof works for the second assertion, substituting $\8$ by $b$. Thus, assume that $|\cG(\8)|<\8$, and let $(x(\landa),\landa)$ be an orbit that contains a point $(x_0,\landa_0)\in \cR^+$ that satisfies the first inequality in \eqref{fineq}. By Lemma \ref{lem:Gmonot}, for $\landa$ close to $\8$ we have $|\cG(\landa)|<|\cG(\8)|$. This, together with \eqref{eq:Gx} and \eqref{fineq}, give that 
$$ |\landa x(\landa) | = |\landa_0 x_0| \frac{|\cG(\landa)|}{|\cG(\landa_0)|}\leq \frac{|\cG(\landa)|}{|\cG(\8)|}<1,$$ that is, $(x_0,\landa_0)\in \cR_{\rm sing}^+$. This completes the proof of Theorem \ref{th:existence}.
\end{proof}

\begin{corollary}
Let $\cW_g$ be a class of elliptic Weingarten surfaces. Then, a necessary and sufficient condition for the existence of rotational examples of $\cW_g$ with singularities is that at least one of the following two conditions hold:
\begin{enumerate}
\item
$|\cG(\8)|<\8$.
 \item
$|\cG(b)|<\8$.
\end{enumerate}
If $b\neq -\8$, the second condition always holds, by Lemma \ref{lem:Gmonot}.
\end{corollary}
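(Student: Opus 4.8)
The plan is to derive the Corollary as an immediate logical consequence of Theorem \ref{th:existence} together with the last sentence of Lemma \ref{lem:Gmonot}. By the definition of $\cR_{\rm sing}$ (Definition \ref{def:rsin}) and the observation following Theorem \ref{th:existence}, the class $\cW_g$ admits a rotational example with singularities if and only if $\cR_{\rm sing}\neq\emptyset$, which by the decomposition $\cR_{\rm sing}=\cR_{\rm sing}^+\cup\cR_{\rm sing}^-$ happens if and only if $\cR_{\rm sing}^+\neq\emptyset$ or $\cR_{\rm sing}^-\neq\emptyset$. So the first step is simply to quote this equivalence.

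Next, I would invoke Theorem \ref{th:existence}(1), which says $\cR_{\rm sing}^+\neq\emptyset$ precisely when $|\cG(\infty)|<\infty$, and Theorem \ref{th:existence}(2), which says $\cR_{\rm sing}^-\neq\emptyset$ precisely when $|\cG(b)|<\infty$. Combining these with the previous step yields that rotational examples with singularities exist if and only if $|\cG(\infty)|<\infty$ or $|\cG(b)|<\infty$, which is exactly conditions (1) and (2) in the statement. There is essentially no obstacle here — it is a one-line deduction, and the only thing to be careful about is the (purely formal) convention that when $b=-\infty$, the condition $|\cG(b)|<\infty$ is to be read as $|\lim_{\landa\to-\infty}\cG(\landa)|<\infty$, consistently with how $\cG(b)$ was defined in Lemma \ref{lem:Gmonot}.

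Finally, for the last clause, I would recall the closing assertion of Lemma \ref{lem:Gmonot}: if $b\neq-\infty$, then $\cG(b)$ is finite. Hence in that case condition (2) automatically holds, so the existence of rotational examples with singularities is guaranteed regardless of the behavior of $\cG$ at $\infty$. This completes the proof. The only ``work'' in the whole argument is bookkeeping about the meaning of the limits; there is no genuine mathematical difficulty, since all the substance was already carried out in Theorem \ref{th:existence} and Lemma \ref{lem:Gmonot}.

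\begin{proof}
By Definition \ref{def:rsin} and the discussion following Theorem \ref{th:existence}, the class $\cW_g$ has rotational examples with singularities if and only if $\cR_{\rm sing}\neq\emptyset$, i.e. if and only if $\cR_{\rm sing}^+\neq\emptyset$ or $\cR_{\rm sing}^-\neq\emptyset$. By Theorem \ref{th:existence}, the first of these holds exactly when $|\cG(\8)|<\8$, and the second exactly when $|\cG(b)|<\8$. This proves the stated characterization. If moreover $b\neq-\8$, then Lemma \ref{lem:Gmonot} gives that $\cG(b)$ is finite, so condition (2) holds automatically.
\end{proof}
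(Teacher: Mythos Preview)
Your proof is correct and is exactly the argument the paper intends: the corollary is stated there without a separate proof, as an immediate consequence of Theorem \ref{th:existence} together with the last sentence of Lemma \ref{lem:Gmonot}. The only minor inaccuracy is that the relevant discussion (that $\cW_g$ has rotational examples with singularities iff $\cR_{\rm sing}\neq\emptyset$) appears right after Definition \ref{def:rsin}, i.e.\ \emph{before} Theorem \ref{th:existence}, not after it.
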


\subsection{Isolated singularities} The following lemma describes the local behavior of a radial elliptic Weingarten graph around an isolated singularity.

\begin{lemma}[{\bf Isolated singularities}]\label{lem:isi}
Let $\Sigma$ be a radial elliptic Weingarten graph $z=u(r)$, $r:=\sqrt{x^2+y^2}$, defined in a punctured disk around the origin. Assume that $\Sigma$ is not totally umbilic. Then, $\Sigma$ has at the origin a non-removable isolated singularity of conelike type, that is:
\begin{enumerate}
\item
The principal curvature $\landa(r)\to \pm \8$ as $r\to 0$.
\item
$u'(r)$ is monotonic for small values of $r$, and thus has a limit, maybe infinite, as $r\to 0$.
 \item
$u(r)$ is monotonic for small values of $r$, and has a finite limit as $r\to 0$. 
\end{enumerate}
\end{lemma}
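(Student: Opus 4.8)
The plan is to translate the statement about the radial graph $z=u(r)$ into the language of the phase plane developed in Section~\ref{sec:fases}, and then read off the three conclusions from Lemma~\ref{lem:xmonot} and the monotonicity analysis of $\cG$. First I would observe that a radial elliptic Weingarten graph over a punctured disk is rotationally invariant, so it is the rotation of a profile curve $\gamma(s)=(x(s),0,z(s))$, and since $\Sigma$ is not totally umbilic, its associated orbit $(x(s),\landa(s))$ is a non-horizontal orbit of \eqref{eq:auto1}, hence a reparametrization of a graph $(x(\landa),\landa)$ as in \eqref{eq:soledo}. The puncture at $r=0$ corresponds to the surface touching its rotation axis, i.e. $x(s)\to 0$. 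By the discussion opening Section~\ref{sec:existence} (and \eqref{eq:soledo}), $x(\landa)\to 0$ forces $\landa\to\pm\8$; moreover by Lemma~\ref{lem:xmonot} the graph $x(\landa)$ is strictly monotone on $(\alfa,\8)$ or on $(b,\alfa)$, so $x(\landa)\to 0$ can only occur if $b=\pm\8$ is the relevant endpoint, and then $\landa\to+\8$ (resp. $\landa\to-\8$). Since $\landa=z'/x$ is one of the principal curvatures, this gives conclusion $(1)$: $\landa(r)\to\pm\8$. It also shows the singularity is non-removable, since $\landa$ is unbounded near the puncture while a regular point would have bounded principal curvatures.

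Next, for conclusion $(2)$, I would track $u'(r)$ along the orbit. Writing the graph equation, $r=x$ and the slope of the profile curve relates to $x'/z'$, so $u'(r)=z'(s)/x'(s)$ up to sign, equivalently $u'$ is a monotone function of the quantity $\landa x = z'$ along the orbit: indeed $z'(s)^2 = (\landa x)^2 \in [0,1]$, and $u'(r)^2 = z'^2/(1-z'^2)$, so $|u'(r)|$ is an increasing function of $|\landa x|$. By Remark~\ref{re:G}, $|\landa x(\landa)|$ has the same monotonicity as $|\cG(\landa)|$ on the relevant component of $(b,\8)\setminus\{\alfa\}$; and by Lemma~\ref{lem:Gmonot}, $\cG$ is eventually monotone as $\landa\to\pm\8$ (in every case listed there, $\cG$ is monotone on a neighborhood of the endpoint $\landa=\8$, resp. $\landa=-\8$). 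Hence $|\landa x(\landa)|$, and therefore $|u'(r)|$, is monotone for $\landa$ near the endpoint, i.e. for $r$ small; combined with the fact that the sign of $z'$ does not change near the singularity (the orbit does not hit $\Gamma$ again once it is heading into the puncture, by Lemma~\ref{lem:xmonot}(ii), so $z'$ keeps its sign), this gives that $u'(r)$ itself is monotone for small $r$ and hence has a limit in $[-\8,\8]$.

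Finally, conclusion $(3)$: $u(r)$ is monotone for small $r$ because $u'(r)$ has a fixed sign there (immediate from the previous step, since $u'$ being monotone and small $r$ it cannot change sign near $0$ — and if $u'$ tended to a finite nonzero limit or to $\pm\8$ the sign is fixed; the only delicate subcase, $u'(r)\to 0$, still has fixed sign by monotonicity of $u'$). To see that $\lim_{r\to 0}u(r)$ is finite I would change variables and estimate $u(r)=u(r_1)-\int_r^{r_1} u'(t)\,dt$, bounding $|u'(t)|$ via $|u'(t)| = |z'|/\sqrt{1-z'^2}$ with $|z'|=|\landa x|$: using the parametrization \eqref{eq:soledo}, $|\landa x(\landa)| = |\cG(\landa)|/|\cG(\landa_0)| \cdot |\landa_0 x_0|$, and as $\landa\to\pm\8$ this stays bounded away from $1$ (indeed it tends to a value $\le 1$; if it tended to $1$ the orbit would accumulate on $\Gamma$ rather than on $x=0$, contradicting the monotonicity in Lemma~\ref{lem:xmonot}), so $|u'|$ is bounded near $r=0$ and the integral converges. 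The main obstacle I anticipate is this last finiteness claim: it requires ruling out the degenerate possibility that $|\landa x(\landa)|\to 1$ as $\landa\to\pm\8$ along a singular orbit, which would make $|u'|$ blow up; I expect to handle it by noting that $|\landa x(\landa)|\to 1$ forces the orbit back onto $\Gamma$ asymptotically, which is incompatible with $x\to 0$ given that $\landa\to\pm\8$ makes the hyperbola branch $x=1/|\landa|$ also tend to $0$ — so one must compare the rates, using \eqref{eq:soledo} and the finite one-sided behavior of $\cF$ at infinity, to conclude $|\landa x|$ stays uniformly below $1$ on $[r=0$-end of the orbit$]$.
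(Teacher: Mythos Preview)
Your argument for item $(1)$ matches the paper's, and your treatment of $(2)$ is correct and in fact more self-contained than the paper's: the paper simply cites Lemma~4.4 of \cite{GM4} for the monotonicity of $u'(r)$, while you extract it from the eventual monotonicity of $|\cG(\landa)|$ near $\landa=\pm\8$ via Remark~\ref{re:G} and Lemma~\ref{lem:Gmonot}.

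The gap is in $(3)$. Your plan hinges on showing that $|\landa\, x(\landa)|$ stays \emph{bounded away from $1$} as $\landa\to\pm\8$, so that $|u'(r)|$ is bounded and the integral trivially converges. But this expectation is false: by \eqref{foran2} (equivalently, Proposition~\ref{propang}), $\lim_{\landa\to\8}|\landa\, x(\landa)|$ can equal $1$, namely on the boundary orbit in $\cR_{\rm sing}^+$ where the limit angle $\nu_0$ vanishes. On that orbit $|u'(r)|\to\8$, and your ``accumulation on $\Gamma$ versus $x\to 0$'' dichotomy does not resolve it, since both $x(\landa)\to 0$ and $1/|\landa|\to 0$, and the orbit genuinely becomes asymptotically tangent to $\Gamma$ at infinity. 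So the boundedness of $u$ cannot be obtained this way without a quantitative rate estimate you have not supplied.

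The paper handles $(3)$ by a completely different route: assume $u(r)\to\pm\8$; then $|u'(r)|\to\8$ by $(2)$, hence in arc-length $x'(s)\to 0$. From the second equation in \eqref{eq:x2} one gets $x''(s)=-z'(s)f(\landa(s))$, which has a limit since $z'(s)\to\pm 1$ and $f$ is monotone; that limit must be $0$ because $x'(s)\to 0$, forcing $f(\landa(s))\to 0$ and hence $b=0$. But $b=0$ makes both principal curvatures positive near the puncture, so the surface is locally convex, which is incompatible with $u(r)\to\pm\8$. This ODE-and-convexity contradiction is the missing idea in your approach.
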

\begin{proof}
Let $\cW_g$ be the elliptic Weingarten class in which $\Sigma$ lies. Let $(x(\landa),\landa)$, with $x(\landa)$ as in \eqref{eq:soledo}, be the trajectory in $\cR$ of the orbit $(x(s),\landa(s))$ of \eqref{eq:auto1} associated to $\Sigma$, as already discussed; note that, since $\Sigma$ is not totally umbilic, the orbit is certainly of this form. By \eqref{eq:soledo}, $x(\landa)\to 0$ happens only if $\landa \to \pm \8$. This proves the first assertion. Observe that $\landa \to -\8$ is only possible if $b=-\8$, since otherwise $(x(\landa),\landa)$ does not stay in $\cR$ as $\landa\to -\8$.

The second assertion is a direct consequence of the general fact that $u'(r)$ is monotonic on any rotational elliptic Weingarten graph $z=u(r)$ in $\R^3$; see e.g. Lemma 4.4 in \cite{GM4}, where this result is proved in a more general context.

To prove the third assertion, we only need to rule out the case that $u(r)\to \pm \8$.  In that case, $|u'(r)|\to \8$. Let us parametrize the profile curve $(r,0,u(r))$ by arc-length as $(x(s),0,z(s))$, with $s\to \8$ as $r\to 0$. By $|u'(r)|\to \8$ as $r\to 0$, we have $x'(s)\to 0$ as $s\to \8$. Also, by \eqref{eq:princur} and the second equation in \eqref{eq:x2}, we deduce that $x''(s)=-z'(s)f(\landa(s))$. From this equation, using the monotonicity of $f$ and $|\landa(s)|\to \8$, we obtain from $z'(s)\to \pm 1$ that $x''(s)$ has a limit at $s\to \8$. Since $x'(s)\to 0$, this limit must be $0$. Thus, $f(\landa(s))\to 0$ as $s\to \8$. Since $|\landa(s)|\to \8$ as $s\to \8$, this is only possible if $\landa(s)\to \8$ and $f(\8)=b=0$. This implies that both principal curvatures of $\Sigma$ must have the same sign around the isolated singularity, what is an obvious contradiction with the fact that $u(r)\to \pm \8$ as $r\to 0$.
\end{proof}

Let $N$ denote the unit normal of the graph $\Sigma$ of Lemma \ref{lem:isi}, and write $e_3=(0,0,1)$. By the monotonicity of $u'(r)$ proved in Lemma \ref{lem:isi}, there exists a \emph{limit angle} $\nu_0\in [-1,1]$ at the isolated singularity of $\Sigma$, defined as 
\begin{equation}\label{eq:an}
\nu_0 :=\lim_{r\to 0^+} \esiz N,e_3\esde=\lim_{r\to 0^+}\frac{\ep}{\sqrt{1+u'(r)^2}},
\end{equation} 
where $\ep=1$ if $N$ points upwards, and $\ep =-1$ otherwise. If we use the parametrization of $\Sigma$ in terms of the profile curve $\gamma(s)=(x(s),0,z(s))$ as in the beginning of Section \ref{sec:fases},  and assume $x(s)\to 0$ as $s\to a$, then we see that $\ep ={\rm sign} (x'(s))$ in \eqref{eq:an}, and that, by \eqref{eq:princur},
\begin{equation}\label{foran}
\nu_0 = \lim_{s\to a} \ep \, \sqrt{1- \landa(s)^2 x(s)^2}.
\end{equation}

We next describe this limit angle $\nu_0$ in the phase space $\cR$, and show that it can be used to parametrize the space of rotational elliptic Weingarten surfaces with an isolated singularity.

\begin{proposition}\label{propang}
Let $\Sigma$ be a rotational surface of the elliptic Weingarten class $\cW_g$, and assume that $\Sigma$ has an isolated singularity. Let $(x(\landa),\landa)$ be the orbit in the phase space $\cR$ associated to $\Sigma$, and take any point $(x_0,\landa_0)$ of that orbit. 

Then, if $(x_0,\landa_0)\in \cR_{\rm sing}^+$ (resp. if $(x_0,\landa_0)\in \cR_{\rm sing}^-$, and so in particular $b=-\8$), the limit angle $\nu_0$ of $\Sigma$ at the singularity, defined by \eqref{eq:an}, satisfies $\nu_0\neq 0$ and is given by 
 \begin{equation}\label{singang}
 \nu_0 = \frac{\ep}{|\cG(\landa_0)|} \sqrt{\cG(\landa_0)^2-(\landa_0\,  x_0 \, \cG(\pm \8))^2}, 
  \end{equation}
where the sign of $\pm \8$ corresponds to the one in $\cR_{\rm sing}^{\pm}$.

Moreover, in the above conditions, two different orbits in $ \cR_{\rm sing}^+$ (resp. in $ \cR_{\rm sing}^-$) always have different limit angles at the isolated singularity.
\end{proposition}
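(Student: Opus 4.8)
The plan is to derive the formula \eqref{singang} directly from the limit expression \eqref{foran} by evaluating $\landa(s)^2 x(s)^2$ in the limit and rewriting everything in terms of $\cG$. Suppose $(x_0,\landa_0)\in\cR_{\rm sing}^+$, so the orbit is a graph $x(\landa)$ as in \eqref{eq:soledo} with $\landa\to\8$ at the singularity (by Lemma \ref{lem:isi}, and the paragraph following it, $\landa\to-\8$ forces $b=-\8$, which is the $\cR_{\rm sing}^-$ case, treated identically with $\8$ replaced by $b$). From Remark \ref{re:G}, equation \eqref{eq:Gx}, one has $\landa\,x(\landa) = \landa_0 x_0 \,\cG(\landa)/\cG(\landa_0)$ for all $\landa$ on the orbit; hence $\lim_{\landa\to\8}\landa x(\landa) = \landa_0 x_0\,\cG(\8)/\cG(\landa_0)$, which is finite precisely because $(x_0,\landa_0)\in\cR_{\rm sing}^+$ implies $|\cG(\8)|<\8$ by Theorem \ref{th:existence}. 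Plugging this into \eqref{foran} gives
\[
\nu_0 = \ep\,\sqrt{1 - \left(\frac{\landa_0 x_0\,\cG(\8)}{\cG(\landa_0)}\right)^2}
= \frac{\ep}{|\cG(\landa_0)|}\sqrt{\cG(\landa_0)^2 - (\landa_0 x_0\,\cG(\8))^2},
\]
which is \eqref{singang}. The sign issue ($\cG(\8)$ versus $\cG(\landa_0)$ possibly of opposite signs) is harmless since only $\cG(\8)^2$ and $\cG(\landa_0)^2$ appear; and $\ep$ is the sign of $x'(s)$ at the singularity, consistent with \eqref{eq:an}.

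Next I would check $\nu_0\neq 0$. By \eqref{foran}, $\nu_0=0$ would mean $\landa(s)^2x(s)^2\to 1$, i.e. the orbit limits onto the boundary hyperbola $\Gamma$. But by Lemma \ref{lem:xmonot}(ii) (and Remark \ref{re:extreme}), an orbit that reaches $\Gamma$ does so transversally at an interior parameter value and then, by the symmetry of Remark \ref{re:simetria}, bounces back and extends smoothly — so it cannot be limiting onto $\Gamma$ as $s\to a$ while simultaneously having $x(s)\to 0$. Equivalently, in the graph picture: on $\cR_{\rm sing}^+$ we have $|\landa x(\landa)|<1$ for all $\landa\ge\landa_0$ strictly (the orbit stays in the interior of $\cR$ all the way to $\landa=\8$), and by the monotonicity of $|\cG|$ near $\8$ in Lemma \ref{lem:Gmonot} the limiting value $|\landa_0 x_0\,\cG(\8)/\cG(\landa_0)|$ is strictly less than $1$. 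Hence $\nu_0^2 = 1 - (\text{that quantity})^2 > 0$.

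Finally, for injectivity of the map (orbit) $\mapsto\nu_0$ on $\cR_{\rm sing}^+$: each orbit in $\cR_{\rm sing}^+$ is a connected curve (Lemma \ref{lem:xmonot}(ii)), so distinct orbits are disjoint and foliate $\cR_{\rm sing}^+$; I want to show the value $\nu_0$ determines the orbit. Fix a reference level $\landa=\landa_0>\alfa$ and restrict attention to orbits meeting that level (orbits confined to $\landa>\landa_0$ would meet any smaller reference level, so one may take $\landa_0$ near $\alfa$, where \emph{every} non-umbilic orbit in $\cR_{\rm sing}^+$ is present since $x(\landa)\to\8$ as $\landa\to\alfa$ pushes it outside $\cR$; more precisely each orbit is parametrized by the single point where it crosses, or is tangent from inside to, a chosen transversal). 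Along such a transversal the orbit is pinned down by $x_0$, and from the displayed formula, with $\landa_0$ and $\cG$ fixed, $\nu_0^2 = 1 - (\landa_0 x_0)^2\cG(\8)^2/\cG(\landa_0)^2$ is a strictly decreasing function of $x_0>0$; since also $\ep$ and the sign behaviour are determined by which side of $\{\landa=\alfa\}$ we are on, the pair $(\ep,\nu_0)$ — indeed $\nu_0$ together with the a priori knowledge that the orbit lies in $\cR_{\rm sing}^+$ — recovers $x_0$, hence the orbit.

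The main obstacle I anticipate is purely bookkeeping: making the reference-level argument in the last paragraph clean, i.e. confirming that every orbit in $\cR_{\rm sing}^+$ really does cross a single well-chosen transversal exactly once (using $x(\alfa)=\8$ from Lemma \ref{lem:xmonot} and connectedness), and tracking the sign conventions for $\ep$ and for $\cG$ consistently between \eqref{eq:an}, \eqref{foran} and \eqref{eq:G} so that the square-root formula is unambiguous. The analytic content — identifying $\lim \landa x(\landa)$ via \eqref{eq:Gx} and reading off $\nu_0$ from \eqref{foran} — is immediate once the phase-space lemmas are in hand.
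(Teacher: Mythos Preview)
Your derivation of \eqref{singang} is exactly the paper's: compute $\lim_{\landa\to\8}\landa\,x(\landa)$ via \eqref{eq:Gx} and insert it into \eqref{foran}. For injectivity, the paper bypasses the transversal bookkeeping you anticipate as the main obstacle: since the identity \eqref{eq:Gx} holds for the function $x(\landa)$ on all of $(\alfa,\8)$ (not only on the portion inside $\cR$), two orbits with the same limit angle satisfy $x_1(\landa_0)=x_2(\landa_0)$ for \emph{every} $\landa_0>\alfa$, so they coincide identically as graphs---no reference level needs to be chosen. On the claim $\nu_0\neq 0$: neither of your two arguments quite closes (transversality of orbit--$\Gamma$ intersections at finite $\landa$ does not preclude the orbit from asymptoting to $\Gamma$ as $\landa\to\8$, and a quantity that is strictly below $1$ at each finite stage may still have limit equal to $1$); the paper's own proof is no more explicit on this point, recording only $\sqrt{1-\nu_0^2}\in(0,1]$.
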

\begin{proof}
We prove the result for $(x_0,\landa_0)\in \cR_{\rm sing}^+$; the case $(x_0,\landa_0)\in \cR_{\rm sing}^-$ is similar. By the monotonicity of $|\landa x(\landa)|$ as $\landa \to \8$, we deduce from \eqref{eq:Gx} and \eqref{foran} that
 \begin{equation}\label{foran2}
\sqrt{1-\nu_0^2}= \lim_{\landa\to \8} |\landa x(\landa) |= \frac{|\landa_0\,  x_0 \, \cG(\8)|}{|\cG(\landa_0)|}\in (0,1].
 \end{equation} 
This yields \eqref{singang}.

To prove the last assertion, consider two orbits $(x_1(\landa),\landa)$, $(x_2(\landa),\landa)$ contained in $\cR_{\rm sing}^+$ in the above conditions. Assume that the limit angles at the isolated singularity of both orbits coincide. By \eqref{foran2}, we obtain for every $\landa_0>\alfa$ that $$\left| \frac{\landa_0 x_1(\landa_0) \cG(\8)}{\cG(\landa_0)}\right|=\left| \frac{\landa_0 x_2(\landa_0) \cG(\8)}{\cG(\landa_0)}\right|.$$ Hence, $x_1(\landa_0)=x_2(\landa_0)$, for all $\landa_0>\alfa$. This completes the proof.
\end{proof}

A Weingarten class $\cW_g$ given by \eqref{eq:g} is said to be {\em uniformly elliptic} if there exist constants $\Lambda_1,\Lambda_2<0$ such that 
\begin{equation}\label{eq:unif}
\Lambda_1 < g'(t) < \Lambda_2, \quad \forall t\in I_g.
\end{equation} 
In particular, $b=g(\8)=-\8$. See e.g. \cite{FGM}. Theorem \ref{th:existence} can be used to prove that rotational, uniformly elliptic Weingarten surfaces do not present singularities:

\begin{corollary}\label{co:uniform}
Let  $\Sigma$ be a rotational uniformly elliptic Weingarten surface. Then,  $\Sigma$  cannot have singularities, and is a piece of one of the complete examples described by Sa Earp and Toubiana in \cite{SaTo,SaTo2}. 
\end{corollary}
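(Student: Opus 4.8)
The plan is to derive Corollary \ref{co:uniform} from Theorem \ref{th:existence}: I will show that a uniformly elliptic class $\cW_g$ has $\cR_{\rm sing}=\emptyset$, so that \emph{no} rotational surface of such a class can become singular. By \eqref{eq:unif} (and the observation following it) we have $I_g=[\alfa,\8)$ and $b=g(\8)=-\8$; hence $I_f=\R$, and the two alternatives in Theorem \ref{th:existence} reduce to $|\cG(\8)|<\8$ and $|\cG(-\8)|<\8$, with $\cG$ the function in \eqref{eq:G}. So it suffices to prove that uniform ellipticity forces $|\cG(\landa)|\to\8$ both as $\landa\to+\8$ and as $\landa\to-\8$.

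For this I would estimate the primitive $\cF$ appearing in \eqref{eq:G}. For $\landa>\alfa$ we have $f=g$, so integrating $\Lambda_1<g'<\Lambda_2<0$ from $g(\alfa)=\alfa$ gives
$$(1-\Lambda_2)(\landa-\alfa)\ \le\ \landa-f(\landa)\ \le\ (1-\Lambda_1)(\landa-\alfa)\qquad (\landa>\alfa).$$
For $\landa<\alfa$ we have $f=g^{-1}$, whose derivative lies in $(1/\Lambda_2,1/\Lambda_1)\subset(-\8,0)$, and the same argument yields
$$(1-1/\Lambda_1)(\alfa-\landa)\ \le\ f(\landa)-\landa\ \le\ (1-1/\Lambda_2)(\alfa-\landa)\qquad (\landa<\alfa).$$
Plugging these two-sided bounds into $\cF(\landa)=\cF(\landa_0)+\int_{\landa_0}^{\landa}dt/(f(t)-t)$ shows that, as $\landa\to\pm\8$, one has $\cF(\landa)\ge -c\log|\landa|+O(1)$ for a constant $c$ which — equal to $1/(1-\Lambda_2)$ at $+\8$ and to $1/(1-1/\Lambda_1)$ at $-\8$ — lies in $(0,1)$; this is precisely where one uses that the $\Lambda_i$ are \emph{negative}, since that makes $1-\Lambda_i>1$ and $1-1/\Lambda_i>1$. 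Therefore $|\cG(\landa)|=|\landa|\,e^{\cF(\landa)}\ge C|\landa|^{1-c}$ with $1-c>0$, whence $|\cG(\8)|=|\cG(-\8)|=\8$. By Theorem \ref{th:existence}, $\cR_{\rm sing}^{+}=\cR_{\rm sing}^{-}=\emptyset$, and since $\Sigma$ lies in such a class $\cW_g$ it has no singularities.

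For the last assertion, once $\Sigma$ is known to be regular I would extend its arc-length profile curve to its maximal interval and follow the corresponding orbit in $\cR$. By the discussion at the start of Section \ref{sec:existence}, the extension can fail only if the orbit approaches $x=0$ or $\landa=b$; but $b=-\8$, and $x\to 0$ cannot occur since $\cR_{\rm sing}=\emptyset$. Moreover, whenever the orbit meets $\Gamma$ the intersection is transverse by Lemma \ref{lem:xmonot}(i) — the only exception being the cylinder of $\cW_g$, which is already complete — so the reflection principle of Remark \ref{re:simetria} extends the surface smoothly across such points. Hence $\Sigma$ is contained in a \emph{complete} rotational surface of $\cW_g$, which by the classification of complete rotational elliptic Weingarten surfaces of Sa Earp and Toubiana \cite{SaTo,SaTo2} is one of the examples on their list; thus $\Sigma$ is a piece of it.

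The main obstacle is the asymptotic estimate for $\cG$: the whole argument rests on checking that the exponent $1-c$ is strictly positive, which is exactly the quantitative content of uniform ellipticity (the slopes $\Lambda_i$ being bounded away from $0$, not merely negative). One must also treat the two ends $\landa\to+\8$ (where $f=g$) and $\landa\to-\8$ (where $f=g^{-1}$) by separate but parallel computations, and check that the borderline case $\alfa=0$ is covered by the same estimate.
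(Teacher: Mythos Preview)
Your proposal is correct and follows essentially the same route as the paper: both reduce the claim, via Theorem \ref{th:existence}, to showing $|\cG(\pm\infty)|=\infty$, and both obtain this by turning the uniform bounds $\Lambda_1<g'<\Lambda_2$ (and the corresponding bounds on $(g^{-1})'$) into two-sided linear estimates on $f(t)-t$ in terms of $t-\alfa$, which after integration give a power-law lower bound $|\cG(\landa)|\geq C|\landa|^{1-c}$ with $0<c<1$. Your explicit identification of the exponents and the extra paragraph justifying that ``no singularities'' implies containment in a complete Sa Earp--Toubiana example are more detailed than the paper's version, but the underlying argument is the same.
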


\begin{proof}
According to Theorem \ref{th:existence}, it is enough to prove that $|\cG(\8)|=|\cG(-\8)|=\8$. First, recall from \eqref{eq:f} that $g(t)=f(t)$ for all $t>\alfa$. Thus, 
since $g$ satisfies \eqref{eq:unif} and $f(\alfa)=\alfa$, we have for all $t>\alfa$ 
$$M_1(t-\alfa)\leq f(t)-t\leq M_2(t-\alfa)<0,$$ where $M_i:=\Lambda_i -1<-1$, $i=1,2$. From there,
$$C_2\lambda (\lambda-\alfa)^{1/M_2} \leq \cG(\landa) \leq C_1\lambda (\lambda-\alfa)^{1/M_1}$$
for some $C_1,C_2>0$ and for all $\landa>\max\{\alfa,0\}$. Thus, $|\cG(\8)|=\8$. 

An analogous computation using that $f(t)=g^{-1}(t)$ for all $t\in (b,\alfa)$ shows that $|\cG(b)|=|\cG(-\8)|=\8$, what finishes the proof.  
\end{proof}


\section{Rotational elliptic Weingarten surfaces: classification}\label{sec:classification}
In this section we give the classification of all rotational elliptic Weingarten surfaces, using the theoretical frame of Sections \ref{sec:prelim} and \ref{sec:fases} and the study of singularities of Section \ref{sec:existence}.

Along this section, we let $\cW_g$ denote some class of elliptic Weingarten surfaces (Definition \ref{def:w}). Associated to the function $g$ that defines $\cW_g$, we recall the definition of the function $f$ in \eqref{eq:f}. 
For classification purposes, we will divide elliptic Weingarten classes $\cW_g$ into three natural subclasses, that present quite different geometric behaviors, and proceed to classify rotational surfaces within each subclass separately.

\subsection{Weingarten surfaces of CGC type ($b\geq 0$)}
If $b\geq 0$ for an elliptic Weingarten class $\cW_g$, any surface in $\cW_g$ has positive curvature, and in particular there are no cylinders (or planes) in $\cW_g$. The constant positive curvature equation $K=c>0$ is a particular example of this subclass of elliptic Weingarten equations.

In this $b\geq 0$ situation, any orbit $(x(\landa),\landa)$ of \eqref{eq:auto1} in $\cR$ satisfies that $|\landa x(\landa)|=\landa x(\landa)$ is strictly decreasing in  $(\alpha,\8)$ (see Lemma \ref{lem:Gmonot} and Remark \ref{re:G}). Then, for every $\landa>\landa_0>\alfa$ we have $$|\landa x(\landa)| < |\landa_0 x(\landa_0)| \leq 1.$$ This shows that $\cR_{\rm sing}^+ = \cR\cap \{\landa>\alfa\}$, see Definition \ref{def:rsin}. A similar argument replacing $\8$ by $b\geq 0$ gives $\cR_{\rm sing}^- = \cR\cap \{\landa<\alfa\}$. In this way, any rotational surface in $\cW_g$ that is not totally umbilic has singularities (isolated if $\landa>\alfa$ and singular curves if $\landa<\alfa$).  Moreover, since $x(\alfa) =\8$ (Lemma \ref{lem:xmonot}) and $b\geq 0$, the orbit meets $\Gamma$ at a unique point, see Figure \ref{fig:Kpositiva}. 

\begin{center}\begin{figure}[htbp]
\includegraphics[width=.75\textwidth]{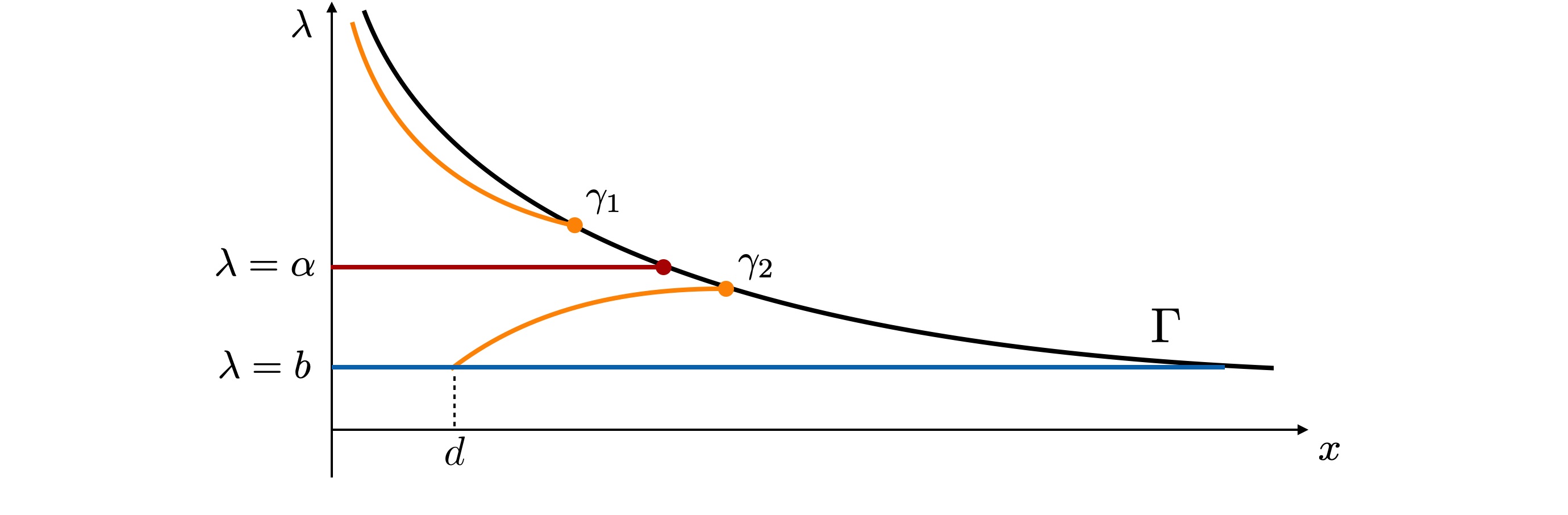}
\caption{Phase space when $b\geq 0$. The orbit $\gamma_1$ correspond to a surface of football type, and $\gamma_2$ describes a bracelet with singular curves at a distance $x_0>0$ of the rotational axis. }\label{fig:Kpositiva}
\end{figure}\end{center}

Thus, the corresponding rotational surfaces in $\cW_g$ for $\landa>\alfa$ are convex topological spheres with two singularities and a horizontal plane of symmetry (see Remark \ref{re:simetria}). We will call these surfaces {\bf (American) footballs}.  On the other hand, rotational surfaces with $\landa<\alfa$ are convex annuli with a horizontal plane of symmetry,  bounded by two singular curves. We will call these surfaces {\bf bracelets}. See Figure \ref{fig:esfera_sing}.

\begin{center}\begin{figure}[htbp]
\includegraphics[width=.65\textwidth]{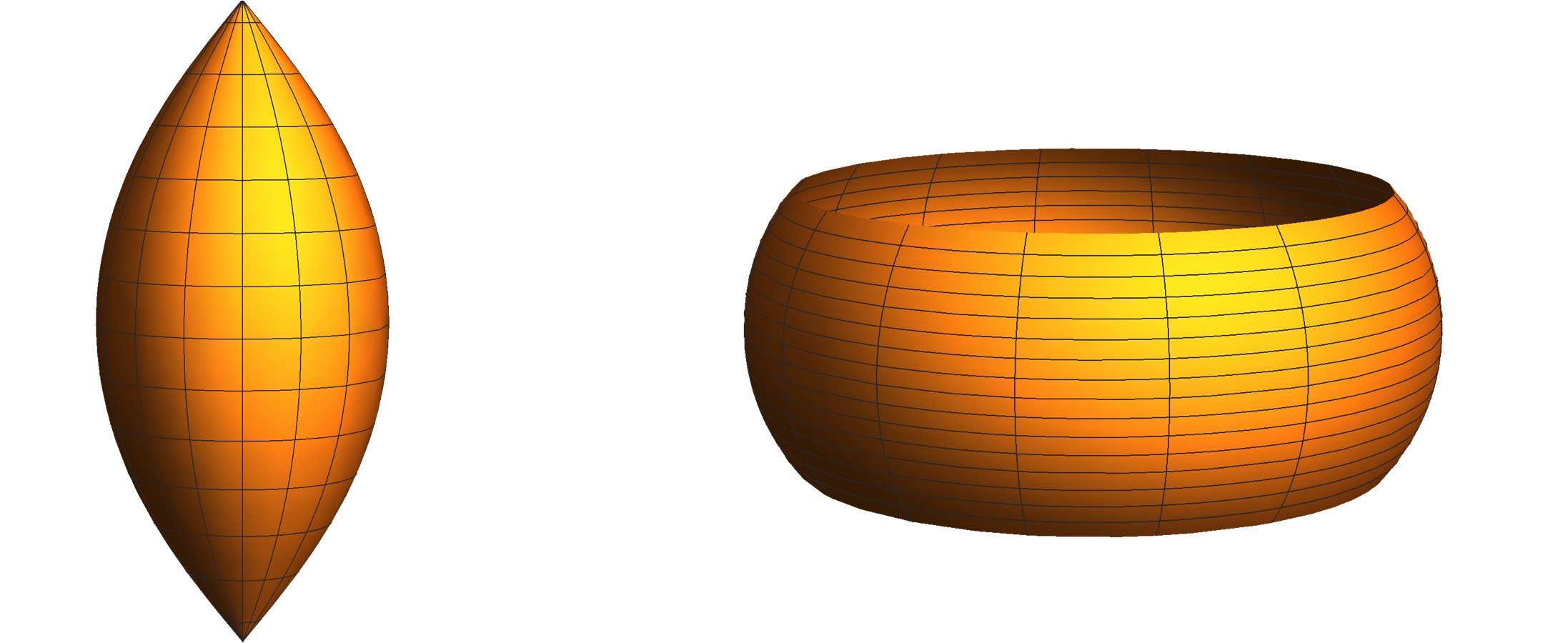}
\caption{(American) footballs and bracelets}\label{fig:esfera_sing}
\end{figure}\end{center}

As seen in Proposition \ref{propang}, the class of footballs can be parametrized in terms of the limit angle $\nu_0$ at their two singularities (by symmetry, if the limit angle at one singularity is $\nu_0>0$, at the other one is $-\nu_0$, on any given football). In this case, taking into account \eqref{singang} and choosing as initial condition the point $(x_0,\landa_0)$ at which the orbit meets $\Gamma$, it is easy to see that $\nu_0$ can take any value in $(0,1)$. We denote by $\cF_\nu$ the football of the class $\cW_g$ with limit angles $\{\nu,-\nu\}$ at its two singularities. Similarly, we will let $\cB_{x_0}$ denote the bracelet of the class $\cW_g$ whose singular curves are at a distance $x_0$ from the rotation axis. It is clear by the phase portrait in Figure \ref{fig:Kpositiva} that such an example exists and is unique, for each $x_0\in (0,1/b)$. 

This leads to the following classification result: 
\begin{theorem}\label{th:Kpositiva}
Let $\cW_g$ be a class of elliptic Weingarten surfaces with $b\geq 0$. Then any rotational surface in $\cW_g$ is an open piece of:
\begin{enumerate}
\item
A sphere of principal curvatures $\alfa$,
 \item
a bracelet $\cB_{x_0}$ for some $x_0\in (0,1/b)$, or
 \item
a football $\cF_\nu$ for some $\nu\in (0,1)$.
\end{enumerate}
\end{theorem}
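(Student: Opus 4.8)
The plan is to reduce the classification to the phase‐space picture already developed: every rotational surface in $\cW_g$ corresponds to an orbit of \eqref{eq:auto1}, so I only need to identify the possible orbits when $b\geq 0$ and translate them into geometry. First I would split according to whether the orbit is horizontal or not. The horizontal orbit $\cR\cap\{\landa=\alfa\}$ is, by the discussion in Section \ref{sec:fases}, exactly the totally umbilic example — a plane if $\alfa=0$ (impossible here since $b\geq 0$ forces $\alfa>0$) and otherwise a sphere of principal curvatures $\alfa$. This gives case (1). Every non-horizontal orbit is, by \eqref{eq:soledo} and Lemma \ref{lem:xmonot}, the restriction to $\cR$ of a graph $(x(\landa),\landa)$ with $x(\alfa)=\8$, which is monotone and meets $\Gamma$ at exactly one point (monotonicity of $\landa x(\landa)$ when $b\geq 0$, from Lemma \ref{lem:Gmonot} and Remark \ref{re:G}, plus $x(\alfa)=\8$, rules out a second intersection).

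Next I would invoke the singularity analysis: in the $b\geq 0$ case the text has already shown $\cR_{\rm sing}^+=\cR\cap\{\landa>\alfa\}$ and $\cR_{\rm sing}^-=\cR\cap\{\landa<\alfa\}$, so \emph{every} non-umbilic rotational example in $\cW_g$ is singular. I then separate the two sub-cases. If $\landa>\alfa$ along the orbit, the surface has an isolated singularity (the orbit reaches $x=0$ as $\landa\to\8$ by Lemma \ref{lem:isi}); since the orbit hits $\Gamma$ once and then, by the symmetry result in Remark \ref{re:simetria}, bounces back and reaches $x=0$ a second time, the surface is a topological sphere with two isolated conelike singularities and a horizontal plane of symmetry — a football. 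Convexity follows because along the orbit both principal curvatures $\landa$ and $\mu=f(\landa)$ are positive (here $b\geq 0$, so $f>0$ on $I_f$), hence $K>0$; and Proposition \ref{propang} gives the limit angle $\nu_0$. Taking the initial point to be the unique intersection with $\Gamma$ (so $\landa_0 x_0=1$) in \eqref{singang}, one checks $\nu_0=\ep\sqrt{1-(\cG(\8)/\cG(\landa_0))^2}$; letting $\landa_0$ range over $(\alfa,\8)$, the ratio $|\cG(\8)/\cG(\landa_0)|$ sweeps out $(0,1)$ by the monotonicity of $\cG$ on $(\alfa,\8)$ in Lemma \ref{lem:Gmonot}, so $\nu_0$ takes every value in $(0,1)$; and Proposition \ref{propang} guarantees distinct orbits give distinct angles, so footballs are parametrized exactly by $\nu\in(0,1)$. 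This is case (3).

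If instead $\landa<\alfa$ along the orbit, then $x(\landa)$ is increasing on $(b,\alfa)$ with $x(\alfa)=\8$, and as $\landa\to b$ (finite, since $b\geq 0>-\8$) we have $x(b)>0$ with $\landa^2x^2<1$ there, so the orbit leaves $\cR$ at a singular \emph{curve} at distance $x(b)$ from the axis rather than at the axis; this is the ``singular along a circle'' case from Section \ref{sec:existence}. Combined with the one transverse hit of $\Gamma$ and the Remark \ref{re:simetria} reflection, the surface is an annulus bounded by two parallel singular circles with a horizontal symmetry plane — a bracelet — and again $K>0$ gives convexity. The distance $x_0$ to the axis equals the value of $\landa x$ evaluated via the minimal/endpoint configuration; as the orbit's $\Gamma$-contact point varies, $x_0$ ranges over $(0,1/b)$ (the bound $1/b$ coming from $\landa^2x^2\leq 1$ at $\landa=b$), and uniqueness for each $x_0$ is clear from the phase portrait in Figure \ref{fig:Kpositiva}. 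This is case (2), and since every orbit falls into one of the three types, the classification is complete.

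The main obstacle I anticipate is not any single computation but making the passage from ``orbit in $\cR$'' to ``open piece of a complete model surface'' fully rigorous: one must argue that the football and bracelet models are genuinely well-defined maximal surfaces (smooth away from the singular set, closing up after the $\Gamma$-bounce via Remark \ref{re:simetria}, and extended as far as the orbit allows), and that an arbitrary rotational $\Sigma\in\cW_g$ — possibly corresponding to only a sub-arc of an orbit — is therefore an open subset of one of them. The monotonicity bookkeeping for the parameter ranges $(0,1)$ and $(0,1/b)$, and the verification that $\nu_0$ and $x_0$ are honest bijective parameters (using Proposition \ref{propang} and Lemma \ref{lem:Gmonot}), is the technical heart; everything else is a direct reading of the phase diagram.
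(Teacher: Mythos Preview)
Your proposal is correct and follows essentially the same route as the paper: the proof in the text is precisely the discussion preceding the theorem statement, and you have reconstructed it step by step --- horizontal orbit gives the sphere, monotonicity of $\cG$ on each side of $\alfa$ (Lemma \ref{lem:Gmonot} with Remark \ref{re:G}) forces a single transverse hit of $\Gamma$ and puts every non-umbilic orbit in $\cR_{\rm sing}^\pm$, the $\Gamma$-bounce of Remark \ref{re:simetria} yields the horizontal symmetry, and the parameter ranges for $\nu$ and $x_0$ come from Proposition \ref{propang} and the phase portrait exactly as the paper indicates. Your closing remark about passing from ``sub-arc of an orbit'' to ``open piece of a maximal model'' is a fair observation, but the paper handles this implicitly in the same way you suggest.
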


\subsection{Weingarten surfaces of minimal type ($\alfa=0$)}\label{sec:a0}

If $\alfa=0$ holds for an elliptic Weingarten class $\cW_g$, then every plane of $\R^3$ is an element of $\cW_g$, and in particular there are no compact surfaces (without boundary) in $\cW_g$. The simplest case of such elliptic Weingarten class is given by minimal surfaces $(H=0)$.

Fix an elliptic Weingarten class $\cW_g$ with $\alfa=0$. Take any $(x_0,\landa_0)\in\Gamma$, and let $(x(\landa),\landa)$ be the orbit of \eqref{eq:auto1} that contains $(x_0,\landa_0)$. By \eqref{monang}, the function $\landa\mapsto \landa x(\landa)$ is strictly increasing. Thus, the orbit $(x(\landa),\landa)$ never touches again $\Gamma$. In this way, $x(\landa)$ is defined for every $\landa$ between $0$ and $\landa_0\in \R$, and $x(\landa)\to \8$ as $\landa\to 0$. 

These orbits describe the {\bf special catenoids} of the Weingarten class $\cW_g$, classified in \cite{SaTo2}. They are complete embedded surfaces, given as the rotation around the $z$-axis of a convex planar curve $(x(z),0,z)$, with $z\in (-z_1,z_1)$, where $z_1\in\R^+\cup\{\8\}$. The function $x(z)$ is positive and even, with $x(z)\to \8$ as $z\to \pm z_1$, and it attains its minimum at $\tau=x(0)$, which is called the {\em necksize} of the special catenoid. Depending on our choice of $g$, the value of $z_1$ can be infinite or finite, i.e. the special catenoid can have either finite or infinite height; see Theorem \ref{heica}. The principal curvature $\landa$ never vanishes on a special catenoid, and one can observe that if $\landa>0$, its unit normal $N$ is the interior one, while if $\landa<0$, it is the exterior one. For each $\tau>0$, we denote by $C^+_{\tau}$ (resp. $C^-_{\tau}$) the special catenoid with necksize $\tau$ and an interior (resp. exterior) unit normal. For the case in which $\cW_g$ is the minimal surface class, i.e., when $g(x)=-x$, then $C^+_{\tau}$ and $C^-_{\tau}$ are the same surface (the usual minimal catenoid of necksize $\tau$) with opposite orientations. However, in the case of a general elliptic Weingarten class $\cW_g$ with $\alfa=0$, the special catenoids $C^+_{\tau}$ and $C^-_{\tau}$ have different geometric behaviors.

\begin{center}\begin{figure}[htbp]
\includegraphics[width=.75\textwidth]{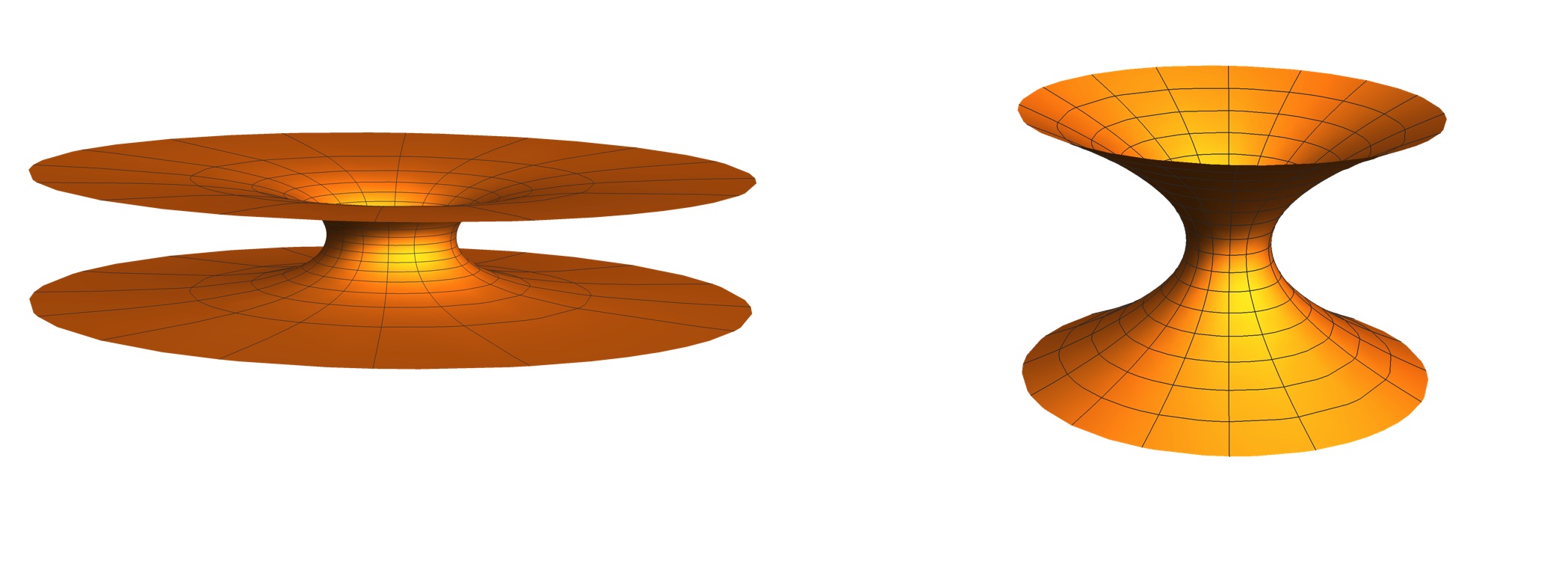}
\caption{Special catenoids with bounded and unbounded height (see Theorem \ref{heica}).}\label{fig:catenoide_reg}
\end{figure}\end{center}

\begin{remark}\label{re:sato2}
Let us make a brief comment about the orientation of the examples. In \cite{SaTo2}, the rotational surfaces are assumed to satisfy an elliptic Weingarten equation $H=F(H^2-K)$ for the orientation given by the exterior unit normal. Thus, when we write our elliptic Weingarten equation $\kappa_2=g(\kappa_1)$ as $H=F(H^2-K)$, the example constructed by Sa Earp and Toubiana in \cite{SaTo2} corresponds to $C^-_{\tau}$ with our notation. The other example $C^+_{\tau}$ that appears in our classification corresponds to the Sa Earp-Toubiana one for the choice $H=-F(H^2-K)$, after a change of orientation.
\end{remark} 

Let $\cG$ be defined by \eqref{eq:G}. If $|\cG(\Lambda)|=\8$ for both $\Lambda=\8$ and $\Lambda=b$, it follows from Theorem \ref{th:existence} that any rotational surface of the elliptic Weingarten class $\cW_g$ is a plane or a special catenoid. However, when $|\cG(\Lambda)|<\8$ for either $\Lambda=\8$ or $\Lambda=b$, we have again from Theorem \ref{th:existence} that $\cW_g$ contains rotational surfaces with singularities. As a consequence of the previous discussion, the corresponding orbits of these singular examples do not meet $\Gamma$. See Figure \ref{fig:minima}. We next describe in detail the resulting surfaces. 

\begin{center}\begin{figure}[htbp]
\includegraphics[width=0.75\textwidth]{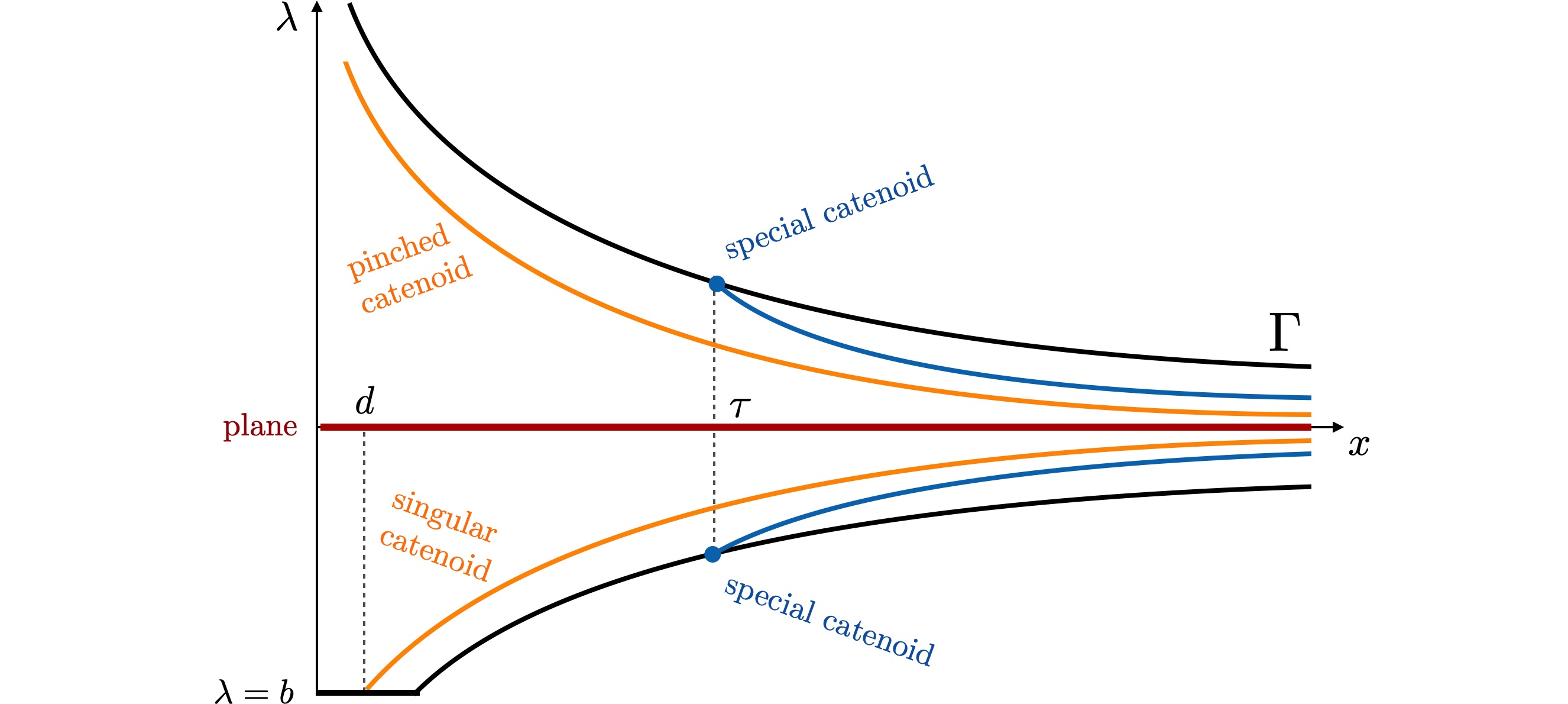}
\caption{Orbits corresponding to complete and singular rotational examples in $\cW_g$ when $\alfa=0$ and $b\neq-\8$.}\label{fig:minima}
\end{figure}\end{center}

To start, if we assume $|\cG(\8)|<\8$ and consider a fixed value $\landa_0>0$, it follows by Theorem \ref{th:existence} that the orbits with singularities in $\cR\cap \{\landa>0\}$, i.e., those lying in $\cR_{\rm sing}^+$, are given by initial conditions $(x_0,\landa_0)$ with 
$0<|\landa_0 x_0|\leq |\cG(\landa_0)|/|\cG(\8)|<1$. The elliptic Weingarten surfaces associated to these orbits are entire rotational graphs with an isolated singularity. Their profile curves are convex planar curves, and their unit normals point upwards. They can be seen as one half of a special catenoid after shrinking its neck to a point. See Figure \ref{fig:catenoide_sing}. We call such examples {\bf pinched special catenoids}. Their limit angle $\nu$ at the singularity is given by \eqref{singang}, and it can be checked that it can any value in $(-1,1)$.

A similar discussion can be made if we assume $|\cG(b)|<\8$, starting with $\landa_0 \in (b,0)$ and considering orbits in $\cR_{\rm sing}^-$, just by replacing $\8$ with $b$. This time, we have two possible situations. If $b=-\8$, we obtain again {\bf pinched special catenoids}, with a similar geometric structure to the previous ones, but this time their unit normals point downwards. They can also be parametrized in terms of their limit angle at the singularity.

If $b\neq-\8$, we obtain a rotational elliptic Weingarten graph over the exterior of a circle of a certain radius $d$. This surface is singular along its boundary circle, since its second fundamental form blows up there. Again, its profile curve is convex, and its unit normal points downwards. We call such surfaces {\bf singular special catenoids}. See Figure \ref{fig:catenoide_sing}. There is exactly one singular special catenoid for each value $d\in (0,-1/b ]$.

These pinched and singular special catenoids can have bounded or unbounded height, just as special catenoids do. See Theorem \ref{heica}.

\begin{center}\begin{figure}[htbp]
\includegraphics[width=\textwidth]{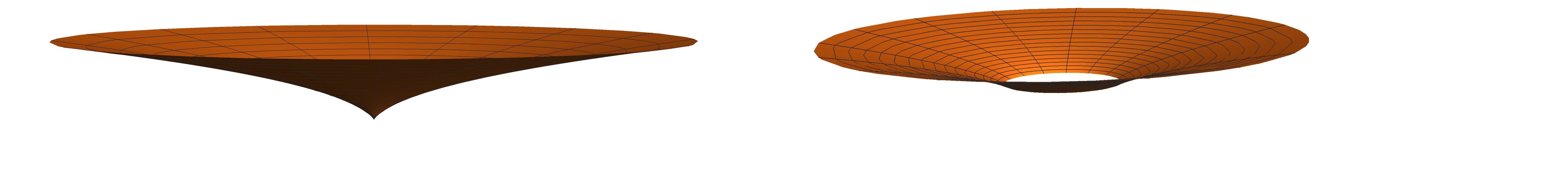}
\caption{Pinched and singular special catenoids.}\label{fig:catenoide_sing}
\end{figure}\end{center}

The theorem below summarizes the obtained result. In the statement, $\cG$ is the function defined in \eqref{eq:G}.
\begin{theorem}\label{th:minimal}
Let $\cW_g$ be an elliptic Weingarten class, with $\alfa=0$. Then, any rotational surface in $\cW_g$ is an open piece of one of the following surfaces: 
\begin{itemize}
\item A plane.
\item A special catenoid $C_{\tau}^+$ or $C_{\tau}^-$.
\item A pinched special catenoid. They exist if and only if either $|\cG(\8)|<\8$, or $b=-\8$ and $|\cG(b)|<\8$. 
\item A singular special catenoid. They exist if and only if  $b\neq-\8$ and $|\cG(b)|<\8$.
\end{itemize}
\end{theorem}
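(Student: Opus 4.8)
The plan is to collect together the structural facts already proven about the orbits of \eqref{eq:auto1} in the case $\alfa=0$, and check that they exhaust the four geometric possibilities in the statement. The starting point is Lemma \ref{lem:xmonot} and part (3) of Lemma \ref{lem:Gmonot}: when $\alfa=0$, every non-umbilic orbit is a graph $(x(\landa),\landa)$ with $x(\landa)$ given by \eqref{eq:soledo}, and $\landa\mapsto \landa\,x(\landa)$ is strictly monotone on its interval (increasing, in the normalization $\cG(0)=0$), with $x(\landa)\to\8$ as $\landa\to 0$. By Remark \ref{re:G} this monotonicity means that once $|\landa x(\landa)|<1$ it stays below $1$ as $\landa\to 0$, so each orbit lives on a single side of $\{\landa=0\}$ and its only possible exit from $\cR$ is through $\Gamma$ (at a point with $\landa\neq 0$) or through $\landa\to\pm\8$ (i.e.\ $x\to 0$, an isolated singularity) or through $\landa\to b$ when $b\in\R$ (a singular boundary circle). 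First I would dispose of the totally umbilic orbit $\{\landa=0\}\cap\cR$: since $\alfa=0$ this is the horizontal half-line giving a plane.

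Next I would handle the orbits that \emph{do} meet $\Gamma$. As observed just before the theorem, if $(x_0,\landa_0)\in\Gamma$ then by \eqref{monang} the function $\landa\,x(\landa)$ is strictly increasing, hence $|\landa x(\landa)|<1$ for all $\landa$ strictly between $0$ and $\landa_0$, so the orbit never returns to $\Gamma$ and its restriction to $\cR$ runs from the point of $\Gamma$ down to $\landa\to 0$, where $x\to\8$. By Remark \ref{re:simetria} the generating curve extends smoothly and symmetrically through the vertical-tangent point, so the full surface is complete, embedded, and rotationally symmetric about a horizontal plane: this is exactly the special catenoid, and the sign of $\landa$ selects $C_\tau^+$ (interior normal, $\landa>0$) or $C_\tau^-$ (exterior normal, $\landa<0$), with $\tau=x(0)$ the necksize. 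Completeness and the description of the profile curve are precisely the facts recalled in the paragraph introducing special catenoids (citing \cite{SaTo2}), so this case requires only assembling those statements.

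The remaining orbits are those that do not meet $\Gamma$, i.e.\ those with $|\landa x(\landa)|<1$ throughout; by Theorem \ref{th:existence} (with $\alfa=0$) these are precisely the points of $\cR_{\rm sing}$, and $\cR_{\rm sing}^+\neq\0$ iff $|\cG(\8)|<\8$ while $\cR_{\rm sing}^-\neq\0$ iff $|\cG(b)|<\8$. For such an orbit, the discussion at the start of Section \ref{sec:existence} says the singularity is either $x\to 0$ (isolated) or $\landa\to b$ with $b\in\R$ (a circle at distance $x(b)>0$). Using \eqref{eq:soledo}, $x\to 0$ forces $\landa\to\pm\8$; so in $\cR_{\rm sing}^+$ we always get $\landa\to\8$, an entire rotational graph with one isolated conelike singularity (Lemma \ref{lem:isi}) and upward-pointing normal — a pinched special catenoid — parametrized by its limit angle $\nu\in(-1,1)$ via Proposition \ref{propang}. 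In $\cR_{\rm sing}^-$ there are two subcases: if $b=-\8$ the orbit must run to $\landa\to-\8$, giving again a pinched special catenoid but with downward normal; if $b\in\R$ the orbit instead runs to $\landa\to b$, giving a graph over the exterior of a circle of radius $d=x(b)\in(0,-1/b]$ whose second fundamental form blows up on that circle — a singular special catenoid. The existence/non-existence clauses in the theorem are then just Theorem \ref{th:existence} restated with $\alfa=0$, noting that $b=-\8$ kills the singular-special-catenoid case and merges its place into the pinched family.

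The only real work, and the mild obstacle, is bookkeeping: one must verify that every non-umbilic orbit falls into exactly one of the three behaviors ``meets $\Gamma$'', ``runs to $\landa\to\pm\8$'', ``runs to $\landa\to b$'', and that no orbit can do two of these — but this is immediate from the strict monotonicity of $\landa\,x(\landa)$ (it can hit the level $\pm 1$ at most once) together with $x(0)=\8$ (so $\landa=0$ is never reached) and the fact that the only other boundary pieces of $\cR$ are $\Gamma$ and $\{\landa=b\}$. Once this trichotomy is in place, the geometric identification of each type with one of the four listed surfaces, and the existence statements, are all direct quotations of Theorem \ref{th:existence}, Proposition \ref{propang}, Lemma \ref{lem:isi}, and the special-catenoid facts from \cite{SaTo2} recalled above.
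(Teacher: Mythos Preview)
Your proposal is correct and follows essentially the same route as the paper: the discussion in Section \ref{sec:a0} preceding the theorem is precisely this orbit-by-orbit analysis (umbilic orbit $\to$ plane; orbits hitting $\Gamma$ $\to$ special catenoids via the monotonicity in \eqref{monang} and Remark \ref{re:simetria}; orbits missing $\Gamma$ $\to$ pinched or singular catenoids via Theorem \ref{th:existence}, split according to whether $b=-\8$ or $b\in\R$). One small slip: when you write ``$\tau=x(0)$ the necksize'' you are mixing the two uses of $x$ --- in your orbit coordinates $x(\landa)\to\8$ as $\landa\to 0$, so the necksize is $\tau=x_0=1/|\landa_0|$ at the $\Gamma$-intersection point, not $x(0)$.
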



\subsection{Weingarten surfaces of CMC type ($\alfa\neq 0$ and $b<0$)} 

If the constants $\alfa,b$ of an elliptic Weingarten class $\cW_g$ satisfy $\alfa\neq 0$ and $b<0$, then the round spheres of principal curvatures $\alfa$ and the round cylinders of principal curvatures $\{0,f(0)\}$ are elements of $\cW_g$. For instance, the constant mean curvature equation $H=\alfa\neq 0$ is an example of this subclass of elliptic Weingarten equations.

So, let $\cW_g$ denote such an elliptic Weingarten class. Recall from Section \ref{sec:fases} that, in the phase space $\cR$ of \eqref{eq:auto1}, the point $(1/|\rho|,\rho)$ with $\rho:=f(0)$ corresponds to the round cylinder of $\cW_g$. By the condition $\alfa\neq 0$ and Lemma \ref{lem:xmonot}, it is easy to see that any orbit $(x(\landa),\landa)$ of \eqref{eq:auto1} intersects the boundary curve $\Gamma$ of $\cR$. Thus, we will see orbits of \eqref{eq:auto1} starting at some point $(x_0,\landa_0)\in \Gamma$.

Also, observe that by the behavior of the function $\cG(\landa)$ in Lemma \ref{lem:Gmonot}, and by Remark \ref{re:G}, any orbit satisfies $|\landa x(\landa)|\to \8$ if $\landa \to \alfa$; see also Figure \ref{fig:G}.

{\bf We will discuss first of all the case $\alfa>0$. }

By \eqref{monang}, we have $(\landa x(\landa))' >0$ if $\landa>\rho$. Since $|\landa x(\landa)|\to \8$ if $\landa \to \alfa$, we deduce that the orbit that starts at $(\tau,\landa_{\tau}) \in \Gamma$ with $\landa_{\tau}>\rho$ meets again $\Gamma\cap \{\landa>\alfa\}$ at a unique point $(\tau',\landa_{\tau'})$, with $\landa_{\tau'}\in (\alfa,\rho)$. See Figure \ref{fig:nominima}.

\begin{center}\begin{figure}[htbp]
\includegraphics[width=0.75\textwidth]{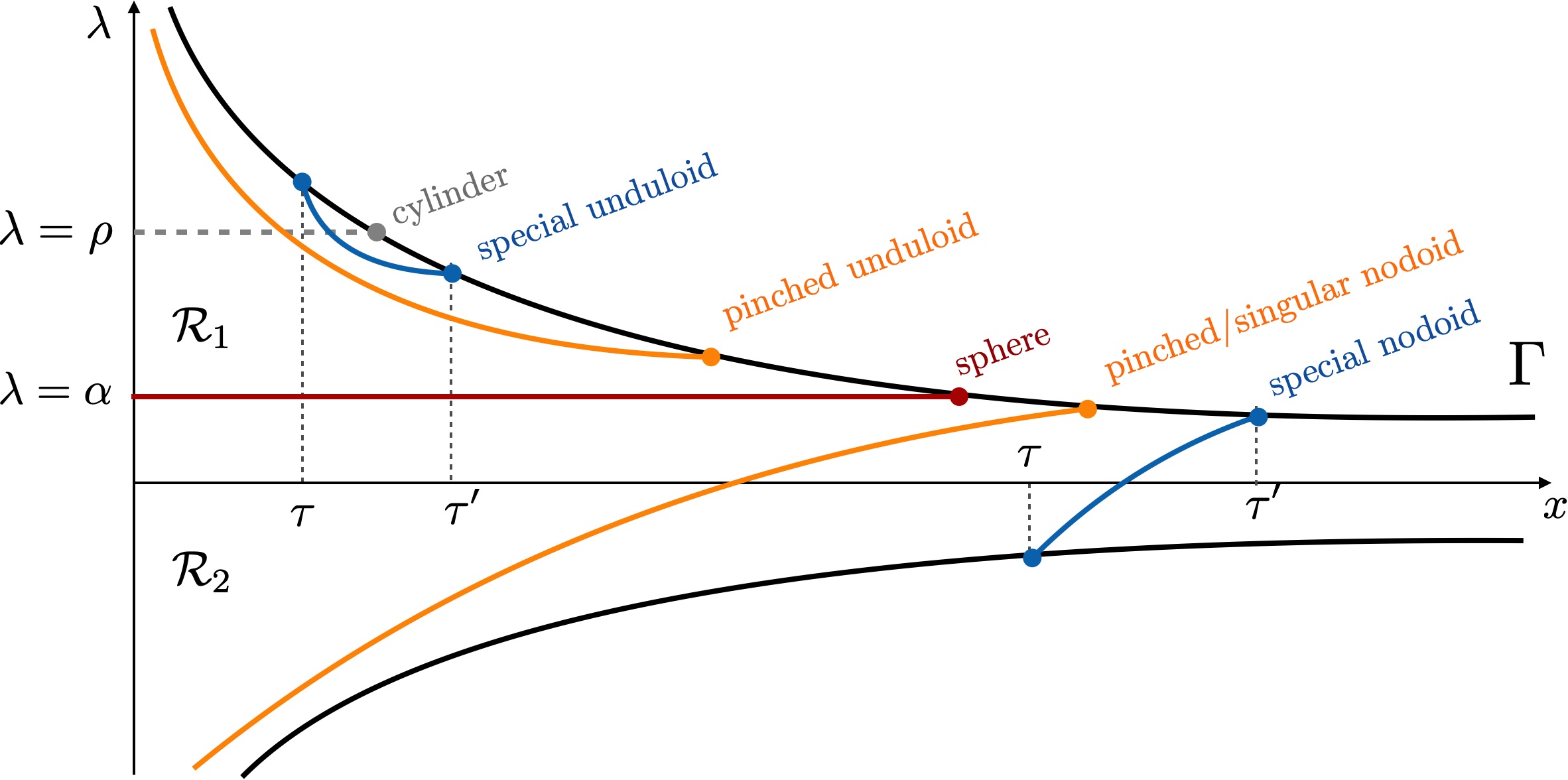}
\caption{Orbits corresponding to rotational examples in $\cW_g$ when $b<0$ and $\alfa>0$. The nodoid-type surfaces with singularities are pinched special nodoids if $b=-\8$ and singular special nodoids if $b>-\8$.} \label{fig:nominima}
\end{figure}\end{center}

These orbits generate the {\bf special unduloids} $\cO_{\tau}$ described by Sa Earp and Toubiana in \cite{SaTo}; see Figure \ref{fig:unduloideNodoide_reg}. They are complete, embedded surfaces, with a periodic profile curve whose minimum (resp. maximum) distance to the axis of rotation is achieved at $x=\tau$ (resp. $x=\tau'$). There is exactly one special unduloid $\cO_{\tau}$ for each $\tau\in (0,1/\rho)$, and they converge to the cylinder of $\cW_g$ as $\tau\to 1/\rho$.  The unit normal $N$ of $\cO_{\tau}$ is the interior one. 

\begin{center}\begin{figure}[htbp]
\includegraphics[width=0.5\textwidth]{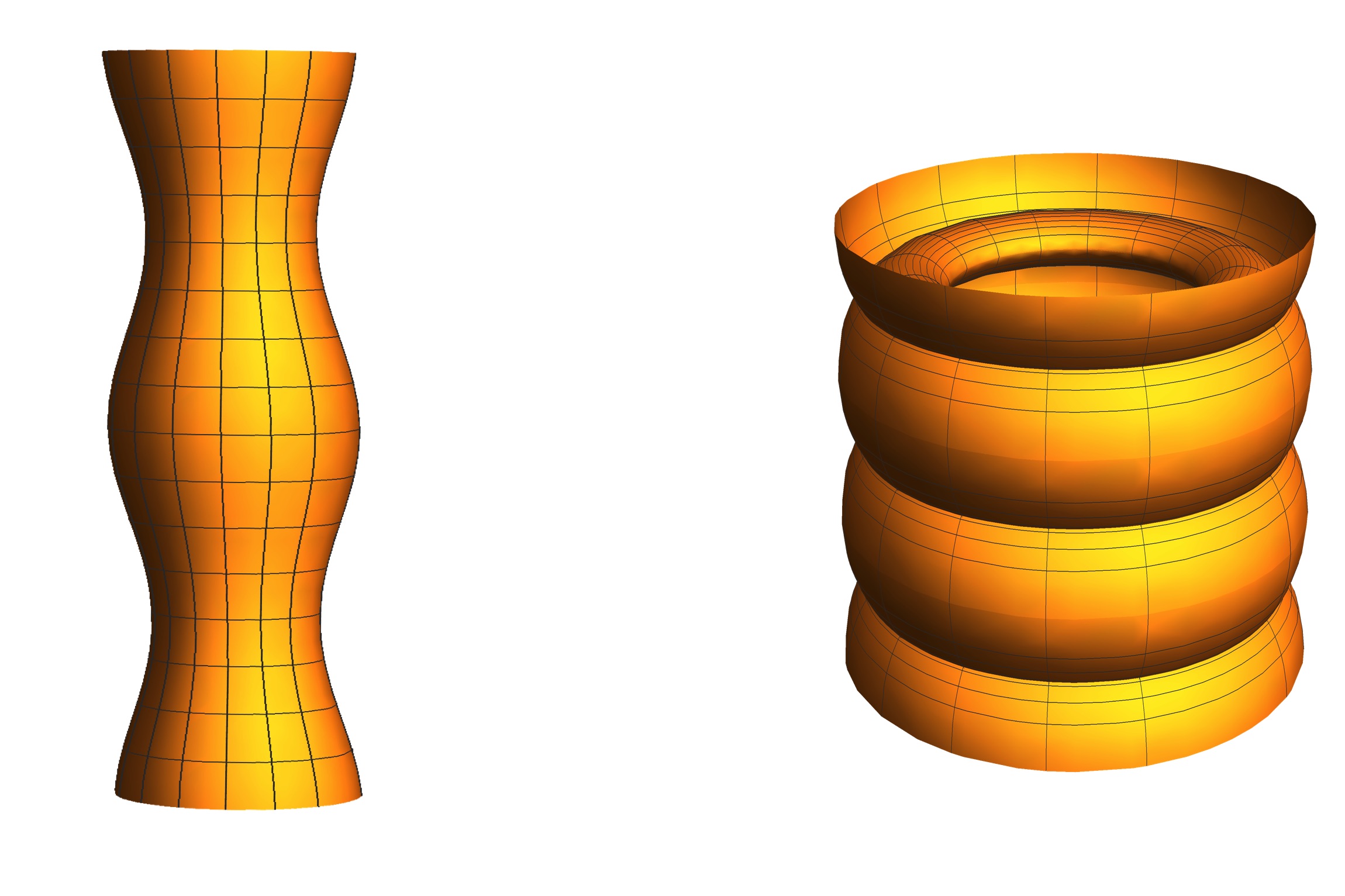}
\caption{Complete rotational examples in the non-minimal case. Left: special unduloid, right: special nodoid.}\label{fig:unduloideNodoide_reg}
\end{figure}\end{center}

Let now $\landa_1$ be the infimum of all the values $\landa_{\tau'}$ of the orbits of special unduloids. Clearly, $\landa_1\geq \alfa$, and if $\landa_1=\alfa$, the region $\cR^+:=\cR\cap \{\landa>\alfa\}$ is foliated by these orbits. In particular, by Theorem \ref{th:existence}, we have $|\cG(\8)|=\8$.

On the other hand, if $\landa_1>\alfa$, we have that for every $\landa_0\in (\alfa,\landa_1]$, the orbit that starts at $(1/\landa_0,\landa_0)\in \Gamma$ stays in $\cR^+$ for every $\landa>\landa_0$, see Figure \ref{fig:nominima}; so it lies in $\cR_{\rm sing}^+$, and by Theorem \ref{th:existence} we have $|\cG(\8)|<\8$. As a matter of fact, one can check that $\cG(\8)=\cG(\landa_1)$, but we will not use this property.

The orbits obtained in this way describe rotational surfaces with singularities in $\cW_g$ that we will call {\bf pinched special unduloids}. Any such surface has two isolated singularities, and a horizontal plane of symmetry (since the orbit meets $\Gamma$). See Figure  \ref{fig:unduloideNodoide_sing}. Topologically, they can be seen as embedded surfaces homeomorphic to $\S^2$, with two isolated singularities. The curvature of the surface around these two singularities is negative. These surfaces can be seen as {\em limit unduloids} as the necksize tends to zero. By symmetry, the limit angles at their singularities are given by $\{\nu,-\nu\}$ for some $\nu\in (0,1)$. By Proposition \ref{propang}, there exists at most one pinched special unduloid for each limit angle $\nu$, and one can check that there actually exists a (unique) pinched special unduloid for any $\nu\in [0,1)$. We denote such surface by $\cO^P_\nu$.

\begin{center}\begin{figure}[htbp]
\includegraphics[width=\textwidth]{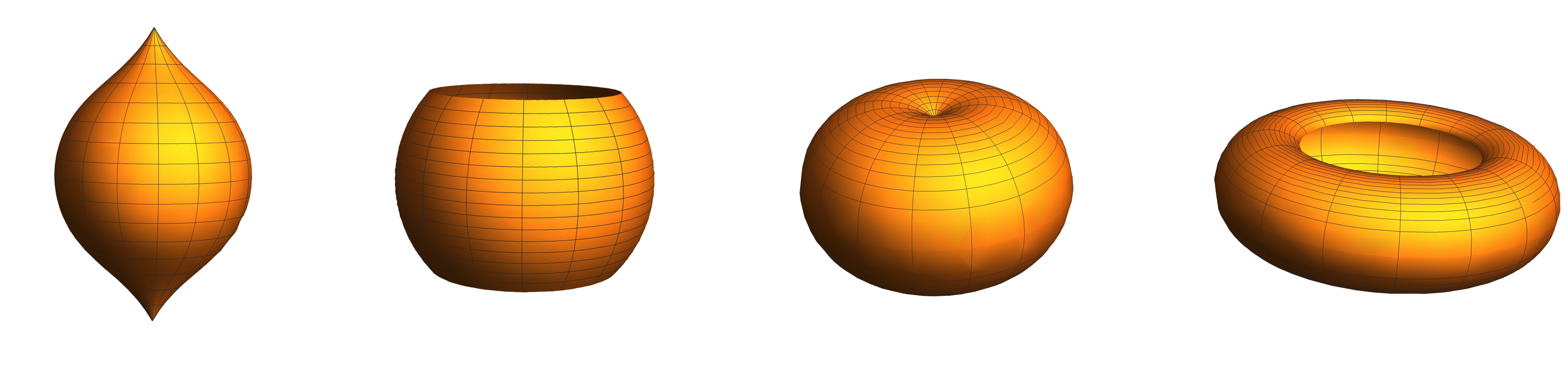}
\caption{From left to right: pinched special unduloid $\cO^P_\nu$, singular special unduloids $\cO_d^S$, pinched special nodoid $\cN^P_\nu $ and singular special nodoid $\cN^S_d$ .}\label{fig:unduloideNodoide_sing}
\end{figure}\end{center}

One should note that all the orbits considered up to this point foliate $\cR\cap \{\landa>\alfa\}$.

We consider next orbits lying in $\cR\cap \{\landa<\alfa\}$. Take $(\tau,\landa_{\tau})\in \Gamma$, with $\landa_{\tau}\in (b,0)$. The same arguments used above show now that the orbit that starts at $(\tau,\landa_{\tau})$ intersects $\Gamma\cap \{\landa>0\}$ at a unique point $(\tau',\landa_{\tau'})$, with $\landa_{\tau'}\in (0,\alfa)$. See Figure \ref{fig:nominima}. This orbit corresponds to one of the  {\bf special nodoids} $\cN_{\tau}$ described in \cite{SaTo}. They are given by a non-embedded periodic profile curve whose minimum (resp. maximum) distance to the axis of rotation is achieved at $x=\tau$ (resp. $x=\tau'$). See Figure \ref{fig:unduloideNodoide_reg}. There is one special nodoid $\cN_{\tau}$ for each $\tau\in (-1/b,\8)$.

Consider now $\landa_2$ as the supremum of the values $\landa_{\tau'}$ of the orbits of the special nodoids. Then, $\landa_2\in (0,\alfa]$, and if $\landa_2=\alfa$ these orbits foliate $\cR\cap \{\landa<\alfa\}$. In that case we must have $b=-\8$ and, by Theorem \ref{th:existence}, $|\cG(-\8)|=\8$.

Suppose next that $\landa_2<\alfa$. Then, for any $\landa_0\in [\landa_2,\alfa)$, the orbit that starts at $(1/\landa_0,\landa_0)$ must lie in $\cR_{\rm sing}^-$, and in particular, by Theorem \ref{th:existence}, we have $|\cG(b)|=\8$. See Figure \ref{fig:nominima}. There are two options.

If $b=-\8$, such orbits are defined for every $\landa<\landa_0$, and $x(\landa)\to 0$ as $\landa\to -\8$. Thus, the resulting surfaces have isolated singularities. We will call these surfaces {\bf pinched special  nodoids}. They have two isolated singularities, and they can be seen topologically as embedded $2$-spheres that are symmetric with respect to a horizontal plane; see Figure \ref{fig:unduloideNodoide_sing}. By symmetry, the limit angles at their singularities are given by $\{\nu,-\nu\}$ for some $\nu\in (0,1)$. Using Proposition \ref{propang} and Lemma \ref{lem:Gmonot}, it can be checked that for each $\nu\in [0,1)$ there exists a unique singular special unduloid with limit angles $\{\nu,-\nu\}$. We denote such surface by $\cN^P_\nu$.

On the other hand, if $b\in \R$, such orbits intersect the segment $\{\landa=b\}$ at a unique point, of the form $(d,b)$ with $d\in (0,1/|b|]$. See Figure \ref{fig:nominima}. The Weingarten surfaces described by these orbits are embedded surfaces diffeomorphic to an annulus, whose boundary is composed by two singular curves at the same distance $d>0$ from the rotation axis. Again, these surfaces are symmetric with respect to a horizontal plane. We call them {\bf singular special  nodoids}. See Figure \ref{fig:unduloideNodoide_sing}. It is easy to see that for each $d\in (0,1/|b|]$ there exists a unique singular special nodoid whose singular curves are at a distance $d$ from the rotation axis. We denote such surface by $\cN^S_d$.

Once here, we have considered all possible initial points in $\Gamma$ for orbits of \eqref{eq:auto1}. Since we know that any such orbit intersects $\Gamma$, we have found all possible rotational surfaces in $\cW_g$, when $\alfa>0$. 

{\bf We finally discuss the case $\alfa<0$}.

First of all, we should note that if $b=-\8$, then for $\alfa<0$ we obtain the same type of examples that for $\alfa>0$, but with their opposite orientation: see the comment regarding orientation after equation \eqref{eq:gorient}.

Suppose then that $b\in \R$, $b<\alfa<0$. Then, any orbit that starts at $(\tau,\landa_\tau)\in \Gamma\cap \{\landa>0\}$ intersects $\Gamma\cap \{\landa<0\}$ at a unique point $(\tau',\landa_{\tau'})$, with $\landa_{\tau'}\in (\alfa,0)$; see Figure \ref{fig:nominima2}. This orbit describes again a special nodoid $\cN_{\tau}$ with necksize $\tau\in (0,\8)$, see Figure \ref{fig:unduloideNodoide_reg}.

\begin{center}\begin{figure}[htbp]
\includegraphics[width=0.75\textwidth]{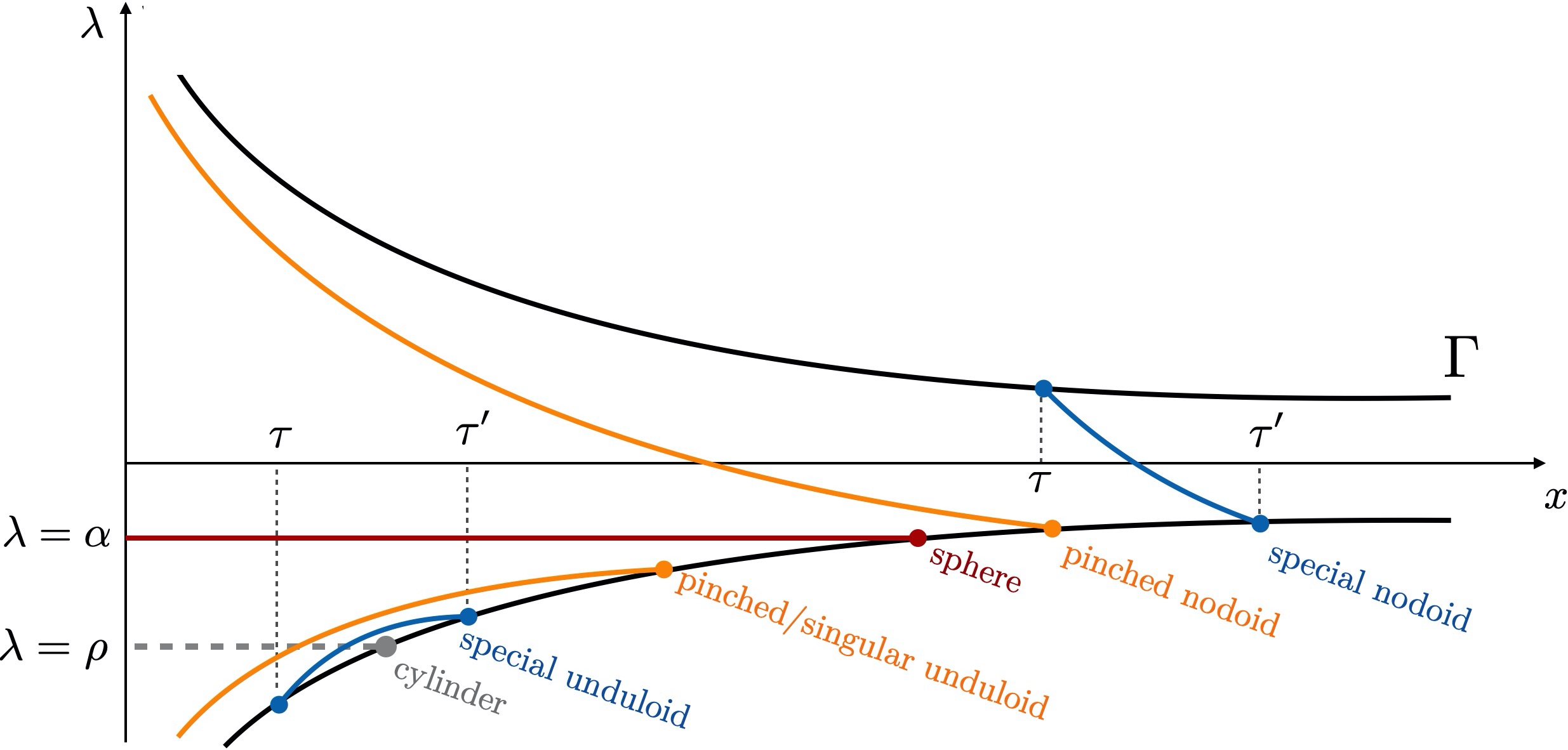}
\caption{Orbits corresponding to rotational examples in $\cW_g$ when $b<0$ and $\alfa<0$. The unduloid-type surfaces with singularities are pinched special unduloids if $b=-\8$ and singular special unduloids if $b\neq-\8$.}\label{fig:nominima2}
\end{figure}\end{center}

Let $\landa_1\in [\alfa,0)$ be the infimum of the values $\landa_{\tau'}$. If $\landa_1>\alfa$, then for any $\landa_0\in (\alfa,\landa_1]$, the orbit that starts at $(-1/\landa_0,\landa_0)$ stays in $\cR$ as $\landa\to \8$, and so it defines a pinched special nodoid $\cN^P_{\nu}$ similar to the ones previously described. See Figures \ref{fig:nominima2} and \ref{fig:unduloideNodoide_sing}

The orbits that begin at points of the form $(-1/\landa_0,\landa_0)\in \Gamma$ with $\landa_0\in (b,\rho)$ describe (complete) special unduloids $\cO_{\tau}$ (Figure \ref{fig:nominima2}). Their necksizes are given by $\tau=-1/\landa_0$ and take all values $\tau\in (-1/b,-1/\rho)$. In this $\alfa<0$ situation, the special unduloid $\cO_{\tau}$ is oriented with respect to its exterior unit normal. See Figure \ref{fig:unduloideNodoide_reg}.

Finally, at any point of the form $(d,b)$, with $d\in (0,1/|b|]$, there arrives a unique orbit, that must start as some point of the form $(-1/\landa_0,\landa_0)$, with $\landa_0<\alfa$. See Figure \ref{fig:nominima2}. In this way, we exhaust all possible orbits of the phase space $\cR$. These last orbits describe {\bf singular special unduloids}, denoted by $\cO^S_d$. These surfaces again have a horizontal plane of symmetry, are diffeomorphic to an annulus, and are bounded by two singular curves at the same distance $d>0$ from the rotation axis. Again, the surface has negative curvature near its singular set. See Figure \ref{fig:unduloideNodoide_sing}.

The previous cases actually describe all the possible behaviours in the phase space of $\cR$. We summarize the obtained classification the theorem below. In the statement, $\cG$ is the function defined in \eqref{eq:G}.

\begin{theorem}\label{th:nonminimal}
Let $\cW_g$ be a class of elliptic Weingarten surfaces, with $\alfa\neq 0$ and $b<0$. Then, any rotational surface in $\cW_g$ is an open piece of one of the following surfaces: 
\begin{itemize}
\item A sphere of principal curvatures $\alfa$.
\item A cylinder of principal curvatures $\{0,f(0)\}$. 
\item A special unduloid $\cO_\tau$.
\item A special nodoid $\cN_\tau$. 
\item A pinched special unduloid $\cO^P_\nu$ (they exist if and only if $\cG(\8)=\8$ and $\alfa>0$, or $b=-\8$, $\cG(-\8)=-\8$ and $\alfa<0$).
\item A pinched special nodoid $\cN^P_\nu$ (they exist if and only if $\cG(\8)=\8$ and $\alfa<0$, or $b=-\8$, $\cG(\8)=-\8$ and $\alfa>0$).
\item A singular special unduloid $\cO^S_d$ (they exist if and only if $b\neq-\8$ and $\alfa<0$).
\item A singular special nodoid $\cN^S_d$ (they exist if and only if $b\neq-\8$ and $\alfa>0$).
\end{itemize}
\end{theorem}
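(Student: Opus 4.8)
The plan is to read the classification directly off the phase portrait of \eqref{eq:auto1}, splitting into cases according to the sign of $\alfa$ and to whether $b$ is finite. First I would dispose of the elementary ingredients. By the orientation-reversal rule \eqref{eq:cambiorienta}, when $b=-\8$ the classes $\cW_g$ with $\alfa<0$ are exactly the reversals of those with $\alfa>0$, so it suffices to treat $\alfa>0$ (with $b<0$ arbitrary) and, separately, $\alfa<0$ with $b\in\R$. The totally umbilic orbit $\cR\cap\{\landa=\alfa\}$ is the sphere of principal curvatures $\alfa$, and, since $\alfa\ne0$ and $b<0$ force $0\in I_f$, the point $(1/|\rho|,\rho)\in\Gamma$ with $\rho:=f(0)$ is the cylinder of principal curvatures $\{0,f(0)\}$. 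Every other rotational surface in $\cW_g$ has, by Lemma \ref{lem:xmonot}, an orbit that is a graph $(x(\landa),\landa)$ given by \eqref{eq:soledo}, and (because $\alfa\ne0$, so $x(\alfa)=\8$) this orbit necessarily meets the hyperbola $\Gamma$; I would therefore parametrise such orbits by their intersection point $(\tau,\landa_\tau)\in\Gamma$, using that $\Gamma$ decomposes into the two arcs $\{\landa>0\}$ and $\{b<\landa<0\}$.

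The engine of the argument is the monotonicity of $\landa\mapsto\landa\,x(\landa)$ along each orbit: by \eqref{monang} its derivative vanishes only at $\landa=\rho$, while Lemma \ref{lem:Gmonot} together with Remark \ref{re:G} gives $|\landa\,x(\landa)|\to\8$ as $\landa\to\alfa$. Consider $\alfa>0$, so $\rho>\alfa>0$. An orbit starting on $\Gamma$ with $\landa_\tau>\rho$ descends, turns at $\landa=\rho$, and re-meets $\Gamma$ at a single point with $\landa\in(\alfa,\rho)$ — these are the complete embedded special unduloids $\cO_\tau$ of \cite{SaTo}, with $\tau$ sweeping $(0,1/\rho)$ and converging to the cylinder as $\tau\to1/\rho$. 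Letting $\landa_1\ge\alfa$ be the infimum of the $\landa$-values of these second intersections: if $\landa_1=\alfa$ the special unduloid orbits foliate $\cR\cap\{\landa>\alfa\}$, and by Theorem \ref{th:existence} one has $|\cG(\8)|=\8$; if $\landa_1>\alfa$, the orbits issuing from $(1/\landa_0,\landa_0)$ with $\landa_0\in(\alfa,\landa_1]$ stay in $\cR$ for all $\landa>\landa_0$, hence lie in $\cR_{\rm sing}^+$ (so $|\cG(\8)|<\8$) and describe the pinched special unduloids $\cO^P_\nu$, parametrised by the limit angle $\nu\in[0,1)$ through Proposition \ref{propang}. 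The horizontal plane of symmetry of the pinched examples comes from Remark \ref{re:simetria}, and the conelike structure of their isolated singularities from Lemma \ref{lem:isi}.

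Symmetrically, still with $\alfa>0$, an orbit from $\Gamma$ with $\landa_\tau\in(b,0)$ rises through $\landa=0$ and re-meets $\Gamma$ at a unique point with $\landa\in(0,\alfa)$: these are the (non-embedded) special nodoids $\cN_\tau$, $\tau\in(-1/b,\8)$. Letting $\landa_2\le\alfa$ be the supremum of the $\landa$-values of those second intersections: if $\landa_2=\alfa$ the nodoid orbits foliate $\cR\cap\{\landa<\alfa\}$, which forces $b=-\8$ and $|\cG(-\8)|=\8$; if $\landa_2<\alfa$, the orbits from $(1/\landa_0,\landa_0)$ with $\landa_0\in[\landa_2,\alfa)$ lie in $\cR_{\rm sing}^-$ — so $|\cG(b)|<\8$ — and split according to whether $b=-\8$ (they run to $x=0$ as $\landa\to-\8$, giving pinched special nodoids $\cN^P_\nu$, $\nu\in[0,1)$) or $b\in\R$ (they terminate on the segment $\{\landa=b\}$ at a point $(d,b)$, giving singular special nodoids $\cN^S_d$, $d\in(0,1/|b|]$). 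By Remark \ref{re:extreme} each point of $\Gamma$ is the endpoint of exactly one orbit in $\cR$, and every point of $\Gamma$ has now been accounted for, so this exhausts all orbits of $\cR$, hence all rotational surfaces of $\cW_g$, when $\alfa>0$. The case $\alfa<0$ with $b\in\R$ (so $b<\alfa<0$ and $\rho<\alfa<0$) runs word for word with unduloids and nodoids interchanged: orbits from $\Gamma\cap\{\landa>0\}$ are special nodoids $\cN_\tau$, the orbits filling any gap just above $\{\landa=\alfa\}$ are pinched special nodoids $\cN^P_\nu$, orbits from the sub-arc of $\Gamma$ with $b<\landa<\rho$ are special unduloids $\cO_\tau$ (now carrying the exterior unit normal), and the orbits terminating on $\{\landa=b\}$ are the singular special unduloids $\cO^S_d$. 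Putting the two cases together with their orientation reversals gives exactly the list in the statement; the ``if and only if'' clauses attached to the four singular families are precisely the translations, via Theorem \ref{th:existence}, of the dichotomies $\landa_1=\alfa$ vs.\ $\landa_1>\alfa$ and $\landa_2=\alfa$ vs.\ $\landa_2<\alfa$ above — that is, of the finiteness of $\cG$ at the relevant endpoint $\8$ or $b$ — together with the sign of $\alfa$ and the alternative $b=-\8$ vs.\ $b\in\R$.

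I expect the main obstacle to be the bookkeeping rather than any single estimate: one must verify that the families $\{\cO_\tau\}$, $\{\cN_\tau\}$ and the four singular families genuinely foliate their respective sub-regions of $\cR$ and jointly exhaust it, that each stated parameter interval ($\tau$, $\nu$ or $d$) is exactly attained, and — the subtlest point — that the threshold orbit $\landa_0=\landa_1$ (resp.\ $\landa_0=\landa_2$) does lie in $\cR_{\rm sing}$ and has limit angle $\nu=0$, so that the pinched families are parametrised by the half-open interval $[0,1)$ rather than by $(0,1)$. Each of these is a careful but routine consequence of the monotonicity of $\landa\,x(\landa)$ supplied by \eqref{monang} and Lemma \ref{lem:Gmonot}, combined with Theorem \ref{th:existence} and Proposition \ref{propang}.
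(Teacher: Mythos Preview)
Your proposal is correct and follows essentially the same route as the paper: the paper's proof of Theorem \ref{th:nonminimal} is the discussion in Section \ref{sec:classification} preceding the statement, and it proceeds exactly by parametrising non-umbilic orbits by their intersection with $\Gamma$, using the monotonicity of $\landa\,x(\landa)$ from \eqref{monang} to distinguish unduloid from nodoid orbits, introducing the thresholds $\landa_1,\landa_2$ to separate the complete from the singular families, and reducing the case $\alfa<0$, $b=-\8$ to $\alfa>0$ via the orientation reversal \eqref{eq:cambiorienta}. Your organisation (disposing of the orientation reversal at the outset, and your explicit remark about the borderline orbit $\landa_0=\landa_1$ having $\nu=0$) is slightly tidier than the paper's, but the argument is the same.
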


\begin{remark}\label{paraun}
For later use, let us remark that the special unduloids $\cO_\tau$ can be parametrized in terms of the necksize $\tau$, where $\tau\in (0,1/\rho)$ if $\alfa>0$, and $\tau\in (-1/b,-1/\rho)$ if $\alfa<0$. Similarly, the special nodoids $\cN_\tau$ are also classified by $\tau$, where this time $\tau\in (-1/b,\8)$ if $\alfa>0$, and $\tau\in (0,\8)$ if $\alfa<0$.
\end{remark}

\subsection{Summary of the classification}\label{sec:summary}

As we stated in the introduction, there are exactly 17 possible qualitative behaviors for a rotational elliptic Weingarten surface.

For the case of surfaces without singularities, we have 7 types: planes, spheres, cylinders, special unduloids, special nodoids, and special catenoids, which can have either bounded or unbounded height. All these examples form the classification of complete rotational elliptic Weingarten surfaces in \cite{SaTo,SaTo2}.

For the case of surfaces with singularities, we have 10 types: footballs, bracelets, pinched and singular special unduloids, pinched and singular special nodoids, and pinched and singular special catenoids, which can again have bounded or unbounded height.

In Theorem \ref{heica} we will classify when all these catenoids have bounded or unbounded height.

\section{The halfspace theorem}\label{sec:applications}

By the Hoffman-Meeks halfspace theorem \cite{HM}, any properly immersed minimal surface in $\R^3$ that lies on one side of a plane $\Pi$ must be itself a plane parallel to $\Pi$. The validity of a fully nonlinear version of this theorem, for the case of elliptic Weingarten surfaces, was studied by Sa Earp and Toubiana in \cite{SaTo2}. They showed that a halfspace theorem in this context is not generally true, since there exist special elliptic Weingarten catenoids properly embedded in a slab between two parallel planes (see also \cite{rafa} for additional examples of this type). On the other hand, they proved the halfspace theorem for some elliptic Weingarten classes $\cW_g$; see the discussion above Theorem \ref{sufici}. In this section we improve upon the results by Sa Earp and Toubiana, using the classification obtained in Theorem \ref{th:minimal}.

To start, we characterize the elliptic Weingarten classes $\cW_g$ for which the special catenoids in $\cW_g$ have unbounded height. For the case of a linear elliptic Weingarten relation $\kappa_2 = a \kappa_a$ with $a<0$, this was done by López and Pámpano in \cite{rafa}.

Recall from Section \ref{sec:a0} that, on any elliptic Weingarten class $\cW_g$ with $\alfa=0$, there are two types of special catenoids: $C_{\tau}^+$, with an interior unit normal, and $C_{\tau}^-$, with an exterior one. The principal curvature $\landa$ of $C_{\tau}^+$ (resp. of $C_{\tau}^-$) is positive (resp. negative). Theorem \ref{heica} below applies to $C_{\tau}^+,C_{\tau}^-$, but also to the pinched and singular catenoids of $\cW_g$ classified in Theorem \ref{th:minimal}, in case they exist. 

\begin{remark}\label{rem:height}
The \emph{if} part in Theorem \ref{heica} below is proved in \cite[Theorem 1]{SaTo2} and so it will be omitted here. We remark that the result in \cite{SaTo2} is stated only for the case of (complete) special catenoids, and using a different notation for the Weingarten equation, but the proof works also for the case of pinched and singular catenoids of $\cW_g$, since it only deals with the behavior at infinity of such surfaces.
\end{remark}

\begin{theorem}\label{heica}
Let $\cW_g$ be an elliptic Weingarten class with $\alfa=0$. Let $m:=g'(0)<0$. Then, the rotational surfaces in $\cW_g$ with $\landa>0$ (resp. $\landa<0$)  have unbounded height if and only if $-1 \leq m<0$ (resp. $m\leq -1$). 
\end{theorem}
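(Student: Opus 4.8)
The plan is to reduce the height computation to an integral estimate along the profile curve, using the phase space parametrization from Section~\ref{sec:fases}. Fix $\cW_g$ with $\alfa=0$, so $b=g(\8)\in[-\8,0)$ and $f=g$ on $[0,\8)$. Consider a rotational surface in $\cW_g$ with $\landa>0$, parametrized by its profile curve $(x(\landa),\landa)$ as in \eqref{eq:soledo}, with $\landa$ ranging over $(0,\landa_0]$ for complete special catenoids $C^+_\tau$ (where $(x_0,\landa_0)\in\Gamma$), or over $(0,\landa_0)$ for pinched/singular catenoids. In all these cases $x(\landa)\to\8$ as $\landa\to 0^+$ by Lemma~\ref{lem:xmonot}, and the total height is $2z_1$ where $z_1=\int z'(\landa)\,\tfrac{dz}{d\landa}\,d\landa$ suitably interpreted; more precisely, from $z'(s)=\landa(s)x(s)$ (which is $\esiz N,e_3\esde$ up to sign via \eqref{eq:princur}) and $\landa'(s)=\tfrac{x'}{x}(f(\landa)-\landa)$ from \eqref{eq:edoraf}, together with $x'(s)^2+z'(s)^2=1$, one gets
\begin{equation}\label{eq:heightint}
z_1 = \int_{0}^{\landa_0} \frac{\landa\, x(\landa)}{\sqrt{1-\landa^2 x(\landa)^2}}\cdot\frac{x(\landa)}{|f(\landa)-\landa|}\, d\landa,
\end{equation}
so the height is unbounded precisely when this integral diverges at the lower endpoint $\landa\to 0^+$ (the upper endpoint and interior are harmless: at $\Gamma$ one has an integrable square-root singularity handled by Remark~\ref{re:simetria}, and for $b\in\R$ the point $\landa=b$ is finite distance). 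So everything reduces to the asymptotic behavior of the integrand as $\landa\to0^+$.

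Next I would extract that asymptotics. Since $m=g'(0)=f'(0^+)<0$ and $\alfa=0$, we have $f(\landa)-\landa \sim (m-1)\landa$ as $\landa\to 0^+$, hence $1/(f(\landa)-\landa)\sim \tfrac{1}{m-1}\cdot\tfrac1\landa$, and from \eqref{eq:soledo}, $x(\landa)=x_0\exp\int_{\landa_0}^\landa \tfrac{dt}{f(t)-t}$ behaves like $C\,\landa^{1/(m-1)}$ for a positive constant $C$ (this is the same estimate used in the proof of Lemma~\ref{lem:Gmonot}, via two-sided linear bounds $c_1\landa\le f(\landa)\le c_2\landa$, so in fact $x(\landa)\asymp \landa^{\,p}$ with $p=1/(m-1)\in(-1,0)$). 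Therefore $\landa x(\landa)\asymp \landa^{\,1+p}\to 0$, so $\sqrt{1-\landa^2x^2}\to 1$, and the integrand in \eqref{eq:heightint} is $\asymp \landa\cdot x(\landa)^2/\landa \asymp x(\landa)^2 \asymp \landa^{2p} = \landa^{2/(m-1)}$. The integral $\int_0 \landa^{2/(m-1)}\,d\landa$ diverges if and only if $2/(m-1)\le -1$, i.e. $m-1\ge -2$, i.e. $m\ge -1$; combined with $m<0$ this is exactly $-1\le m<0$. This proves the $\landa>0$ case. For the $\landa<0$ case (surfaces $C^-_\tau$, pinched nodoid-type, etc.), one argues analogously but with $\landa$ approaching $0^-$: now the relevant derivative is $f'(0^-)$, which by $f\circ f=\mathrm{Id}$ equals $1/g'(0)=1/m$, so the same computation gives divergence iff $2/((1/m)-1)\le -1$; since $m<0$ one has $(1/m)-1<0$, and the inequality rearranges to $1/m\ge -1$, i.e. $m\le -1$, as claimed. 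Alternatively, and more cleanly, the $\landa<0$ case follows from the $\landa>0$ case by the change-of-orientation correspondence \eqref{eq:cambiorienta}, which sends $g(x)$ to $-g^{-1}(-x)$ and hence $m=g'(0)$ to $1/m$, swapping the roles of the two sides.

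The main obstacle I anticipate is making the integral formula \eqref{eq:heightint} and its endpoint analysis rigorous: one must be careful that the height along the profile curve is genuinely governed by the $\landa\to 0$ behavior and not by what happens near $\Gamma$ (where $x'=0$ and the change of variables $s\leftrightarrow\landa$ is singular), and one must justify that the two-sided asymptotic $x(\landa)\asymp\landa^{1/(m-1)}$ is strong enough — a one-sided bound would only give half of the "iff". The cleanest route is: (i) use Remark~\ref{re:simetria} to reduce to the monotone branch $x'(s)>0$ where $\landa$ is a legitimate parameter and \eqref{eq:edoraf} gives $\tfrac{ds}{d\landa}=\tfrac{x}{x'|f(\landa)-\landa|}=\tfrac{x}{\sqrt{1-\landa^2x^2}\,|f(\landa)-\landa|}$; (ii) write $z_1=\int z'(s)\,ds = \int \landa x(\landa)\,\tfrac{ds}{d\landa}\,d\landa$ to get \eqref{eq:heightint}; (iii) split the $\landa$-interval into a neighborhood of $\landa=0$ and its complement, noting the complement contributes a finite amount (the integrand is continuous there, with at worst an integrable $(1-\landa^2x^2)^{-1/2}$ singularity at interior points of $\Gamma$, and when $b\in\R$ the endpoint $\landa=b$ is a removable finite-distance point since $1/(f(\landa)-\landa)\to 0$ there); (iv) apply the two-sided power-law estimate near $\landa=0$ and conclude by comparison with $\int_0\landa^{2/(m-1)}\,d\landa$. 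Since the "if" direction is already available from \cite[Theorem~1]{SaTo2} by Remark~\ref{rem:height}, strictly speaking only the "only if" direction — i.e.\ that $m<-1$ (resp.\ $m>-1$) forces bounded height — needs the full argument, and for that a one-sided lower bound on the integrand suffices, which slightly eases step (iv).
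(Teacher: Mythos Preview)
Your approach is correct and genuinely different from the paper's. The paper does not compute the height integral at all: for the ``only if'' direction (the ``if'' part is likewise deferred to \cite{SaTo2}), it assumes $m<-1$, picks $a_2\in(m,-1)$, and compares the given surface with the rotational surface $\Sigma_2$ solving the \emph{linear} equation $\mu=a_2\landa$. From \eqref{eq:soledo} one gets $x(\landa)<x_2(\landa)$ near $\landa=0$, hence $u_1'(r)^2\le u_2'(r)^2$ for large $r$, so $\Sigma$ is trapped between a horizontal plane and $\Sigma_2$; then the paper simply cites L\'opez--P\'ampano \cite{rafa}, who proved that the linear rotational examples with slope $a<-1$ have bounded height. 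Your route, by contrast, is a direct asymptotic analysis of the height integral \eqref{eq:heightint}, reducing everything to the convergence of $\int_0\landa^{2/(m-1)}\,d\landa$. This is more self-contained (it does not need \cite{rafa}; in fact it re-derives that special case, since for $f(\landa)=m\landa$ the asymptotics are exact), and it makes the threshold $m=-1$ completely transparent. The paper's argument is shorter and avoids the endpoint bookkeeping, at the cost of an external reference.

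Two small corrections, neither of which affects the strategy. First, from $f'(0^+)=m$ alone you only get $x(\landa)=\landa^{1/(m-1)+o(1)}$, not the genuine two-sided $\asymp$; you already flag this, and it is harmless because for ``only if'' you can run the comparison with $f_2(\landa)=(m+\ep)\landa$ for small $\ep$ (exactly as in the paper's inequality \eqref{ineqf}) and still get a convergent majorant when $m<-1$. Second, in your last sentence the word ``lower'' should be ``upper'': to prove that $m<-1$ forces bounded height you need the integral to converge, hence an \emph{upper} bound on the integrand.
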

\begin{proof}

Let $\Sigma$ be a rotational surface in $\cW_g$ with $\landa>0$ (the case $\landa<0$ is analogous). Then, $f$ has a finite right derivative at the origin, given by $m$. Assume that $m<-1$. For $t>0$ near zero we can write
\begin{equation}\label{ineqf}
f(t)< f_2(t)<0,
\end{equation}
where $f_2(t):=a_2 t$ for $a_2\in (m,-1)$.

Given a fixed $(x_0,\landa_0)\in\cR$ with $\landa_0>0$, consider $( x(\landa), \landa)$ and $( x_2(\landa), \landa)$ the orbits passing through $(x_0,\landa_0)$ for the systems \eqref{eq:auto1} associated to the functions $f$, $f_2$, respectively. Since these orbits are given by \eqref{eq:soledo}, if we choose $\landa_0$ small enough so that \eqref{ineqf} holds in $(0,\landa_0)$, we obtain for every $\landa\in (0,\landa_0)$ that $x(\landa) < x_2(\landa)$.

Let now $\Sigma_2$ be the rotational surface associated to the orbit $(x_2(\landa), \landa)$, where $\landa \in (0,\landa_0]$. By choosing $r_0>x_0$ large enough, both surfaces $\Sigma,\Sigma_2$ can be seen as radial graphs around the $z$-axis, with associated graphing functions $u_1(r),u_2(r)$, where $r$ varies in $[r_0,\8)$. For any such $r>r_0$, we have $$u_i'(r)^2 = \frac{\landa_i^2 r^2}{1-\landa_i^2 r^2},$$ where $\landa_i$, $i=1,2$, are the values at $r$ of the principal curvature $\landa$ of each of the graphs $z=u_i(r)$. Using here now that $x(\landa) \leq x_2(\landa)$ 
for every $\landa\in (0,\landa_0)$ for their associated orbits, we deduce that $\landa_1\leq \landa_2$, and hence $$u_1'(r)^2\leq u_2'(r)^2.$$ Therefore, up to a vertical translation and possibly a $180º$-rotation around the $x$-axis, the graph $\Sigma$ lies between $z=0$ and $\Sigma_2$ outside a sufficiently large compact set. In \cite{rafa}, López and Pámpano proved that the height of a rotational surface satisfying $\mu=a\, \landa$ for some $a<0$ is bounded if and only if $a<-1$. Thus, $\Sigma_2$ has bounded height, since so does $\Sigma$. As explained in Remark \ref{rem:height}, the converse was proved in \cite{SaTo2}.
\end{proof}

\begin{definition}
We say that an elliptic Weingarten class $\cW_g$ satisfies the halfspace property if the only properly immersed surfaces $\Sigma$ of $\cW_g$ that lie on one side of a plane $\Pi$ in $\R^3$ are the planes parallel to $\Pi$. \end{definition}

\begin{corollary}\label{neces}
A necessary condition for $\cW_g$ to satisfy the halfspace property is that $g(0)=0$ and $g'(0)=-1$.
\end{corollary}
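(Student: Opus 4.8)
The statement is a contrapositive exercise: I want to show that if either $g(0)\neq 0$ or $g'(0)\neq -1$, then $\cW_g$ fails the halfspace property, i.e.\ it contains a properly immersed surface lying in a halfspace that is not a plane. I would split the argument according to which hypothesis fails.

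\emph{Case $g(0)\neq 0$, i.e.\ $\alfa\neq 0$.} Here the round sphere of principal curvatures $\alfa$ (and the round cylinder, if $b<0$) belongs to $\cW_g$. A round sphere is a properly immersed compact surface contained in a halfspace bounded by any of its tangent planes, and it is obviously not a plane. So the halfspace property fails immediately. (One should note that this does not even need the rotational classification, only the existence of the umbilical sphere from Definition \ref{def:w}.)

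\emph{Case $\alfa=0$ but $g'(0)=m\neq -1$.} Now $\cW_g$ is of minimal type (Subsection \ref{sec:a0}), so by Theorem \ref{th:minimal} it contains the special catenoids $C_\tau^+$ and $C_\tau^-$, which are complete, hence properly embedded, and are not planes. The point is to find one of them that lies on one side of a plane, i.e.\ has bounded height. This is exactly what Theorem \ref{heica} delivers: if $m>-1$, then $C_\tau^-$ (the surface with $\landa<0$, for which the relevant condition is $m\leq -1$) has \emph{bounded} height, hence lies in a slab, violating the halfspace property; symmetrically, if $m<-1$, then $C_\tau^+$ (with $\landa>0$, governed by the condition $-1\leq m<0$) has bounded height and again violates it. In either sub-case we produce a properly immersed non-planar surface of $\cW_g$ confined to a halfspace (indeed a slab), so $\cW_g$ does not satisfy the halfspace property.

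Combining the two cases: if $\cW_g$ satisfies the halfspace property, then necessarily $\alfa=0$ (equivalently $g(0)=0$) and $m=g'(0)=-1$. \emph{The only mild subtlety} is making sure that when $m\neq -1$ the relevant special catenoid actually exists in $\cW_g$ and is complete; but this is guaranteed by Theorem \ref{th:minimal}, which lists $C_\tau^+,C_\tau^-$ as members of every elliptic Weingarten class with $\alfa=0$, and by the fact (also recorded in Subsection \ref{sec:a0}) that special catenoids are complete embedded surfaces, hence properly immersed. No genuinely hard step is involved here: the corollary is a direct packaging of the existence of umbilical spheres together with the height dichotomy of Theorem \ref{heica}.
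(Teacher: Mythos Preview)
Your proof is correct and follows exactly the paper's approach: the paper also dispatches $g(0)\neq 0$ via the round sphere in $\cW_g$, and for $g(0)=0$, $g'(0)\neq -1$ it invokes Theorem \ref{heica} to obtain a special catenoid of bounded height. Your case split according to the sign of $m+1$, identifying which of $C_\tau^\pm$ has bounded height, is a slight elaboration of what the paper states more tersely, but the argument is the same.
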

\begin{proof}
If $g(0)\neq 0$, there exist round spheres in the class $\cW_g$, what contradicts the halfspace property. If $g(0)=0$ and $g'(0)\neq -1$, it follows from Theorem \ref{heica} that one of the two types of special catenoids $C_{\tau}^+$, $C_{\tau}^-$ of $\cW_g$ have bounded height, and thus it contradicts again the halfspace property.
\end{proof}

As regards sufficient conditions, Sa Earp and Toubiana proved in \cite[Corollary 1.1]{SaTo2} that the following set of conditions is sufficient for the validity of the halfspace property in an elliptic Weingarten class $\cW_g$:

\begin{enumerate}
\item
$g(0)=0$ and $g'(0)=-1$. (Note that these are necessary, by Corollary \ref{neces}).
\item
$t+g(t)$ does not change sign in $[0,\8)$.
 \item
$b=g(\infty)=-\infty$.
\end{enumerate}
The theorem below improves this result, by providing a more general sufficient condition. 

\begin{theorem}\label{sufici}
Let $\cW_g$ be a class of elliptic Weingarten surfaces, with $g(0)=0$ and $g'(0)=-1$. If, additionally, $t+g(t)$ does not change sign in some small interval $(0,\ep)$, $\ep>0$, then $\cW_g$ satisfies the halfspace property.
\end{theorem}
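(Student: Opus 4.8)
The plan is to prove the halfspace property by contradiction, using special catenoids of $\cW_g$ as barriers for a maximum-principle (sliding) argument, exactly in the spirit of the classical proof of the Hoffman--Meeks theorem, but with the classification of Theorem~\ref{th:minimal} replacing the minimal catenoid. Suppose $\Sigma\in\cW_g$ is a properly immersed surface contained in a halfspace, say $\{z\ge 0\}$, but $\Sigma$ is not a horizontal plane. Since $g(0)=0$ and $g'(0)=-1$, the class $\cW_g$ is of minimal type ($\alfa=0$), so by Theorem~\ref{th:minimal} the only relevant comparison surfaces are the special catenoids $C_\tau^+,C_\tau^-$ (and, if they exist, the pinched/singular ones). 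The first step is to observe that, by Theorem~\ref{heica}, the hypothesis that $t+g(t)$ does not change sign near $0$ forces the relevant catenoids to have \emph{unbounded} height: indeed, writing $m=g'(0)=-1$, the sign of $t+g(t)=t+f(t)$ near $0$ distinguishes whether $f(t)$ lies above or below the line $-t$, which is precisely the borderline case $m=-1$ of Theorem~\ref{heica}; a short computation with \eqref{ineqf}-type comparison against $f_2(t)=a_2t$ shows that in the non-sign-changing case one of $C_\tau^+$ or $C_\tau^-$ (the one whose principal curvature $\landa$ has the appropriate sign) has unbounded height, and moreover is a proper radial graph over the exterior of a disk whose ``waist'' circle $\{x=\tau\}$ can be taken arbitrarily close to the axis by letting $\tau\to 0$ (using that $\tau$ ranges over $(0,\infty)$ or similar, per Theorem~\ref{th:minimal} and its proof).

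The second step is the sliding argument itself. Fix a catenoid $C=C_\tau^+$ (say) of unbounded height, oriented so that it opens upward, i.e.\ it is a graph $z=v(r)$ on $r\ge\tau$ with $v(\tau)=0$ and $v(r)\to+\infty$ (after a reflection if necessary). Translate $C$ vertically downward by a large amount and then slide it upward: let $C_t$ be $C$ translated so that its waist lies at height $t$. For $t$ very negative, $C_t$ lies entirely below $\Sigma$ (here one uses properness of $\Sigma$ and that $\Sigma\subset\{z\ge 0\}$, together with the fact that $C_t$, being a graph of sublinear-plus-bounded type outside a compact set... in fact one needs the key geometric input that the ``height function'' of the catenoid grows, but its graph stays in a slab-complement in a controlled way — more precisely, that $\Sigma$ being in a halfspace and $C_t$ going to $-\infty$ at its waist forces initial disjointness). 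Increase $t$ until the first contact value $t_0=\sup\{t:C_t\cap\Sigma=\emptyset\}$. At $t=t_0$ there is either an interior tangency point, or a contact ``at infinity'' along the ends. An interior tangency is ruled out by the elliptic maximum principle (the Hopf comparison principle for the fully nonlinear elliptic Weingarten equation, valid since both $\Sigma$ and $C_{t_0}$ satisfy the same elliptic equation \eqref{eq:g} and lie locally on the same side): it would force $\Sigma=C_{t_0}$ locally hence globally, contradicting that $\Sigma$ is complete/proper while $C_{t_0}$ has a waist circle — or more simply contradicting that $\Sigma$ is a surface without boundary lying in a halfspace whereas $C_{t_0}$ is not contained in any halfspace below. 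Contact at the ends is where the unbounded-height property is used: because $C_{t_0}$ has unbounded height, pushing it down slightly still keeps the relevant portions of the two surfaces disjoint near infinity (the ends of $C_{t_0}$ ``escape upward'' faster than any bounded obstruction coming from $\Sigma$ near a given height), so contact cannot first occur at infinity — this is the standard argument showing the supremum $t_0$ is attained at an interior point. Hence no contact is possible at all, which contradicts the existence of a finite $t_0$; the resolution is that $\Sigma$ must have been a horizontal plane to begin with.

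The third, more delicate point — and the one I expect to be the \textbf{main obstacle} — is handling the possible \emph{singularities} of the comparison catenoids and making the maximum principle rigorous at or near those singular points. If the pinched or singular special catenoids of $\cW_g$ exist, one would rather use the \emph{regular} special catenoids $C_\tau^+$ or $C_\tau^-$ as barriers (they are complete and regular by Theorem~\ref{th:minimal}), so the tangency analysis stays in the smooth category; the role of the singular examples is only to rule out, via Theorem~\ref{heica} and Corollary~\ref{neces}, the cases where the halfspace property fails. So the genuine technical work is: (i) confirming that at least one family among $\{C_\tau^+\}_{\tau>0}$, $\{C_\tau^-\}_{\tau>0}$ consists of unbounded-height graphs under the hypothesis ``$t+g(t)$ doesn't change sign on $(0,\ep)$'' — this is a direct consequence of Theorem~\ref{heica} once one checks that the sign condition corresponds to $m=-1$ with the catenoid of the appropriate orientation being the unbounded one; and (ii) verifying that the first-contact point in the sliding cannot escape to infinity, which requires a quantitative description of how the ends of $C_\tau$ sit relative to horizontal planes — precisely, that for each height $h$, the portion of $C_\tau$ below height $h$ is compact and shrinks appropriately as we vary $\tau$, so that one can always find a catenoid disjoint from any prescribed compact piece of $\Sigma$ and then slide. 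A clean way to organize (ii) is: first use properness of $\Sigma$ to reduce to showing $\Sigma$ cannot lie strictly above $\{z=0\}$ unless it is that plane, then pick $\tau$ small so the waist of $C_\tau$ is a tiny circle, translate its waist to height $-M$ for $M$ large so $C_\tau$ is below $\Sigma$ outside a compact set and (by properness, since $\Sigma$ avoids a neighborhood of $\{z=-M\}$) also disjoint from $\Sigma$ on that compact set, then slide up; since $C_\tau$ has unbounded height, the ends never catch up to $\Sigma$ from outside a halfspace, so first contact is interior, and Hopf's lemma finishes.
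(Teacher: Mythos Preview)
Your overall outline (Hoffman--Meeks sliding with special catenoids of unbounded height) is the right skeleton, but there is a genuine gap at the first-contact step, and it stems from a misidentification of the role of the hypothesis ``$t+g(t)$ does not change sign on $(0,\ep)$''.

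First, note that with $m=g'(0)=-1$, Theorem~\ref{heica} already gives unbounded height for \emph{both} families $C_\tau^+$ and $C_\tau^-$ (the borderline value $m=-1$ satisfies both inequalities there). So the sign condition on $t+g(t)$ is \emph{not} needed to produce unbounded-height barriers; your ``first step'' is using the hypothesis in the wrong place.

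The actual obstruction is at the interior tangency. You write that an interior first contact is ``ruled out by the elliptic maximum principle \dots\ since both $\Sigma$ and $C_{t_0}$ satisfy the same elliptic equation''. This is only valid when the unit normals of $\Sigma$ and the barrier \emph{agree} at the contact point. For a general elliptic Weingarten class the equation $\kappa_2=g(\kappa_1)$ is not invariant under change of orientation, so if $\Sigma$ and the catenoid touch with \emph{opposite} normals they are not solutions of the same elliptic PDE and the $\cW_g$-maximum principle does not apply. You have no mechanism to exclude this case, and in the paper's proof this is exactly the case that survives.

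The paper resolves this as follows. The barriers $G_\sigma$ are arranged so that, outside a fixed vertical cylinder, their parallel curvature satisfies $|\landa_\sigma|<\ep$; the hypothesis $t+g(t)\geq 0$ on $(0,\ep)$ (say) then forces the barriers to have $H\geq 0$ there. One first shows the contact point lies outside this cylinder (using properness of $\Sigma$ in $\{z>0\}$ and that the $G_\sigma$ stay below a low horizontal plane). At the contact point with opposite normals, one reverses the orientation of $\Sigma$ to $\Sigma^*$, uses the pointwise comparison of principal curvatures to get $\kappa_1^\Sigma<\ep$, and hence (again by $t+g(t)\geq 0$) $H_\Sigma\geq 0$, i.e.\ $H_{\Sigma^*}\leq 0$. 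Now $\Sigma^*$ lies above $G$, both with the upward normal, with $H_G\geq 0\geq H_{\Sigma^*}$, and the ordinary mean-curvature comparison principle (Schoen) gives the contradiction. In short: the sign hypothesis is used to manufacture a \emph{mean curvature} inequality that replaces the unavailable $\cW_g$-maximum principle in the opposite-normal case. Your proposal needs this step (or an equivalent device) to close the argument. A secondary point: contrary to your third paragraph, the paper does use the pinched/singular catenoids in the barrier family $\{G_\sigma\}$, precisely to obtain convergence to the punctured plane as $\sigma\to 0$; this is convenient but the essential missing ingredient is the orientation issue above.
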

\begin{proof}
We use the basic strategy of the classical proof of the halfspace theorem for minimal surfaces, together with our description of rotational examples of $\cW_g$ in Theorems \ref{th:minimal} and \ref{heica}. We will merely outline some of the most standard details of the argument, and focus on the new ingredients of the proof. 

Arguing by contradiction, assume that $\Sigma\in \cW_g$ is properly immersed in $\R^3$ and lies in, say, the halfspace $\{z>0\}$, but not in any other halfspace $\{z>z_0>0\}$. 

We will assume that $t+g(t)\geq 0$ in $(0,\ep)$; the proof is similar if $t+g(t)\leq 0$. 
Let $G_1$ be the lower half of the special catenoid  $C^-_{\tau'}$ of the class $\cW_g$, with rotation axis the $z$-axis, for some necksize $\tau'>0$.  That is, $G_1$ is the piece of $C^-_{\tau'}$ that lies below its neck. Then, $G_1$ is a radial graph $z=u_1(x,y)$ of a function $u_1$ defined on the exterior of a disc of radius $\tau'$ in the plane $z=0$, and it lies in $\cW_g$ for the orientation given by its upwards-pointing normal vector. The associated orbit $\gamma_1$ of $G_1$ in $\cR^-$ ends up at the point $(\tau',-1/\tau')\in \Gamma$.

Now consider, for any $\sigma\in (0,1)$, the  orbit $\gamma_\sigma=(x_\sigma(\landa),\landa)$  passing through the point $(\tau',-\sigma/\tau')\in \cR^-$. These orbits give rise to a family of upwards-oriented rotational graphs $\{G_\sigma\}_{\sigma\in (0,1]} \subset \cW_g$, with the $z$-axis as their rotation axis. Up to translation, each such $G_{\sigma}$ is a graph $z=u_{\sigma}(x,y)$ of a function $u_{\sigma}$ defined over a domain of the $z=0$ plane, that is either the exterior of a disc around the origin or the punctured plane, and so that $u_{\sigma}=0$ at the boundary of this domain. The \emph{meridian} and \emph{parallel} principal curvatures of the rotational graphs $G_{\sigma}$ satisfy $\landa_\sigma<0<\mu_\sigma$, and 
 \begin{equation}\label{lalasi}
\landa_\sigma=g(\mu_{\sigma}).
\end{equation}
Let us describe in more detail the family $G_{\sigma}$, using our discussion in Section \ref{sec:classification}. If $b=-\8$ and $\cG(-\8)=\8$ all the surfaces $\{G_\sigma\}_{\sigma\in (0,1]}$ are the lower halves of the special catenoids $C^-_{\tau}$ of $\cW_g$, with $\tau\in (0,\tau')$. They converge to the horizontal plane $z=0$ punctured at the origin as $\sigma\to 0$ (equivalently, as $\tau\to 0$).

If $b=-\8$ and $\cG(-\8)<\8$, there exists $\sigma_0\in (0,1)$ such that $\{G_\sigma :\sigma>\sigma_0\}$ are the lower halves of the special catenoids $C_{\tau}^-$ with $\tau\in (0,\tau')$, while $\{G_\sigma : \sigma\leq \sigma_0\}$ is the family of upwards-oriented pinched special catenoids of $\cW_g$. When $\sigma\to 0$, these pinched catenoids $G_\sigma$ converge uniformly on compact sets to the plane $z=0$ punctured at the origin.

Finally, if $b\neq -\8$, then there exists again $\sigma_0\in (0,1)$ such that $\{G_\sigma :\sigma>\sigma_0\}$ are the lower halves of the special catenoids $C_{\tau}^-$ with $\tau\in (0,\tau')$, but this time $\{G_\sigma : \sigma\leq \sigma_0\}$ is the family of upwards-oriented singular special catenoids of $\cW_g$. As $\sigma\to 0$, the distance $d$ from the singular curve of $G_\sigma$ to the rotation axis (the $z$-axis) also tends to zero. The $G_{\sigma}$ converge again to the plane $z=0$ punctured at the origin, uniformly on compact sets.

Consider next the value $\ep>0$ as in the statement of Theorem \ref{sufici}. The behaviour of the orbits $\gamma_\sigma$ implies that there exists $R>0$ such that, for every $\sigma\in (0,1)$, it holds
\begin{equation}\label{laro}
|\landa_\sigma(r)| < |\landa_1(r)|<\ep, \ \text{for all $r>R$,}
\end{equation}
where $r$ is the distance to the rotation axis. See Figure \ref{fig:semiespacio}. 
\begin{center}\begin{figure}[htbp]
\includegraphics[width=\textwidth]{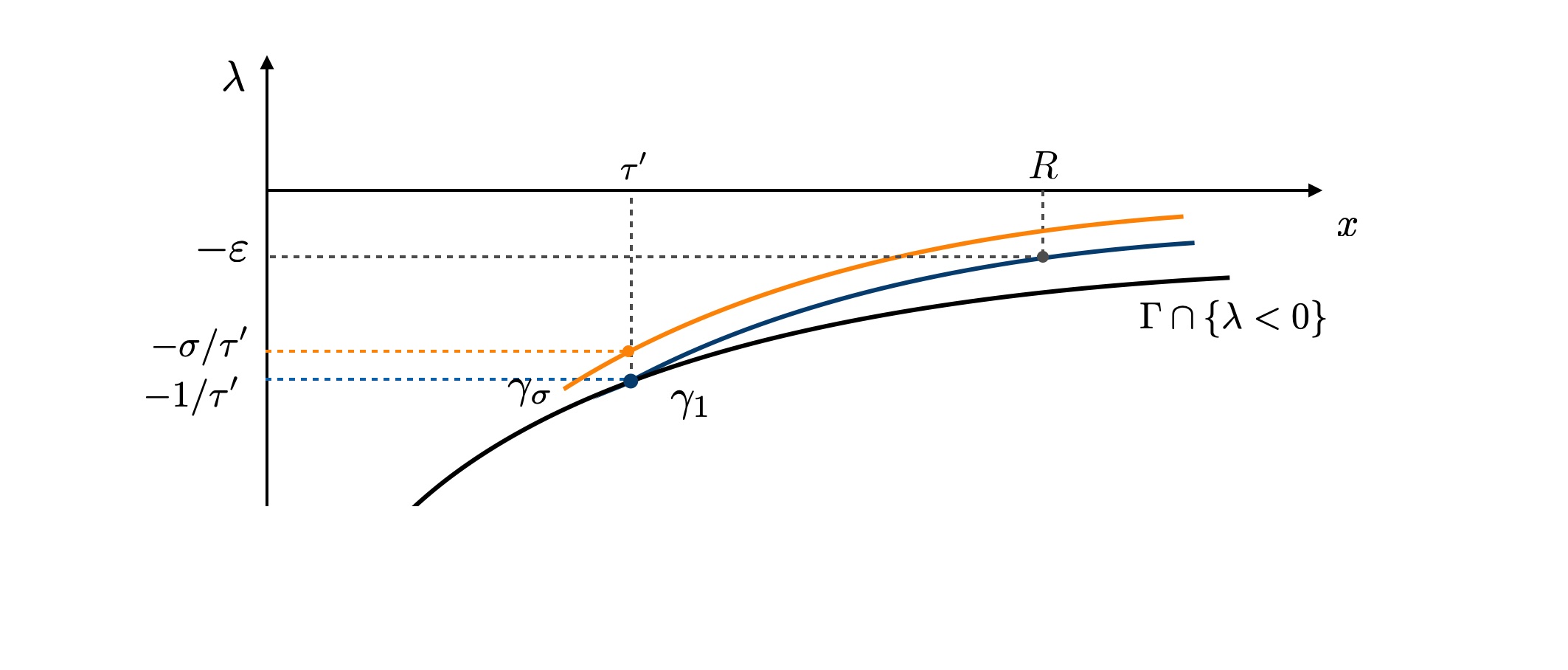}\vspace{-1.2cm}
\caption{The orbits $\gamma_\sigma$, $0<\sigma\leq 1$.}\label{fig:semiespacio}
\end{figure}\end{center}

In particular, since we are assuming $t+g(t)\geq 0$ in $(0,\ep)$, it follows from \eqref{lalasi} and \eqref{laro} that the graphs $G_\sigma$ have non-negative mean curvature outside the vertical cylinder in $\R^3$ of radius $R$.

Take now $c>0$ small enough so that $\Sigma \cap G_1^{\delta}=\emptyset$ for any $\delta\in (0,c]$, where $G_1^\delta:=G_1 + (0,0,\delta)$ (recall that $\Sigma$ is proper in $\R^3$, while $G_1$ lies in $\{z\leq 0\}$, with $\parc G_1$ contained in the $z=0$ plane). Note that, since $g'(0)=-1$, we have by Theorem \ref{heica} that the graphs $\{G_\sigma\}_{\sigma\in (0,1]}$ are not bounded from below at infinity. Then, using the standard argument of the Hoffman-Meeks halfspace theorem together with the already discussed fact that the family $\{G_\sigma\}_{\sigma\in (0,1]}$ converges to the punctured $z=0$ plane as $\sigma\to 0$, we deduce that for any $\delta\in(0,c]$, if we denote $G_{\sigma}^\delta:=G_{\sigma} + (0,0,\delta)$, then there exists $\sigma(\delta)\in (0,1)$ such that $\Sigma\cap G_{\sigma}^\delta$ is empty for all $\sigma>\sigma(\delta)$, but there exists some $p(\delta)\in\Sigma\cap G_{\sigma(\delta)}^\delta$. Note that $\Sigma,G_{\sigma(\delta)}^\delta$ are necessarily tangent at this \emph{first contact point} $p(\delta)$. If they have the same unit normal at $p(\delta)$, we reach a contradiction with the maximum principle within the elliptic Weingarten class $\cW_g$. Thus, $\Sigma,G_{\sigma(\delta)}^\delta$ have opposite unit normals at $p(\delta)$, and in particular $\Sigma$ must be downwards oriented at $p(\delta)$.

Let $R>0$ be the constant in \eqref{laro}. Let $D_{R}:= \{(x,y,0): x^2+y^2\leq R^2\}$ and $\Sigma_{R}:=\{p\in \Sigma: \pi(p)\in D_{R}\}$, where $\pi(x,y,z):=(x,y,0)$. Since $\Sigma$ is proper and lies in $\{z>0\}$, we have ${\rm dist}(\Sigma_R, D_R )=:\rho_0>0$. Take $\delta<\rho_0$, and note that, with the above notations, $G_{\sigma}^\delta$ always lies in the half-space $\{z\leq \delta\}$. Thus, $\Sigma_R\cap G_{\sigma(\delta)}^\delta$ is empty, i.e., $\pi(p(\delta))$ lies outside $D_R$.

In particular,  for $\delta>0$ sufficiently small, and due to the definition of the constant $R>0$, the graph $G:=G_{\sigma(\delta)}^\delta$ has non-negative mean curvature around its contact point $p(\delta)$ with $\Sigma$. Recall also that $G$ is upwards-oriented and $\Sigma$ lies above it. 

 Let $\Sigma^*$ denote $\Sigma$ with its opposite orientation. Then, $G,\Sigma^*$ have the same unit normal at $p(\delta)$, and $\Sigma^*$ lies above $G$. By the comparison principle, at $p(\delta)$, the largest (resp. smallest) principal curvature of $\Sigma^*$ is greater than the largest (resp. smallest) one of $G$. Thus, if $\kappa_1^{\Sigma}\geq 0\geq  \kappa_2^{\Sigma}$ are the principal curvatures of $\Sigma$ with its initial orientation (the one for which $\Sigma\in \cW_g$), we have $\kappa_1^{\Sigma}\leq -\landa^G <\ep$ at $p(\delta)$, by \eqref{laro}. Using now that $\kappa_2^{\Sigma}=g(\kappa_1^{\Sigma})$ and that $t+g(t)\geq 0$ if $t\in [0,\ep)$, we conclude that $\Sigma$ has non-negative mean curvature around $p(\delta)$. Thus $\Sigma^*$ has non-positive mean curvature, $H_{\Sigma^*}\leq 0$, around $p(\delta)$. Since $G$ has $H\geq 0$ and lies below $\Sigma^*$, this contradicts the interior maximum principle (see Lemma 1 in \cite{Sch}). This contradiction completes the proof.   
\end{proof}

\begin{definition}
We say that $g$ has \emph{finite order} at the origin if, for some $n> 1$, $g$ is of class $C^n$ around $0$, and $g^{(n)}(0)\neq 0$.
\end{definition}

\begin{corollary}\label{th:order}
Let $\cW_g$ be an elliptic Weingarten class, and assume that $g$ has finite order at the origin. Then $\cW_g$ satisfies the halfspace property if and only if $g(0)=0$ and $g'(0)=-1$.
\end{corollary}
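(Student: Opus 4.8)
The plan is to derive this corollary by combining Corollary \ref{neces} (necessity) with a version of Theorem \ref{sufici} (sufficiency), using the finite-order hypothesis to guarantee that the sign-condition on $t+g(t)$ near $0$ holds automatically.

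First I would dispatch the necessity direction: this is precisely Corollary \ref{neces}, which requires no extra hypotheses — if $\cW_g$ satisfies the halfspace property, then $g(0)=0$ and $g'(0)=-1$. So the only work is in the converse.

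For the sufficiency direction, assume $g(0)=0$ and $g'(0)=-1$, and that $g$ has finite order $n>1$ at the origin, i.e., $g\in C^n$ near $0$ with $g^{(n)}(0)\neq 0$ and, since $n$ is minimal with this property, $g^{(k)}(0)=0$ for $2\leq k\leq n-1$. The key observation is that, by Taylor's theorem around $0$,
\begin{equation}\label{taylorsum}
t+g(t) = t + g(0) + g'(0)\,t + \frac{g^{(n)}(0)}{n!}\,t^n + o(t^n) = \frac{g^{(n)}(0)}{n!}\,t^n + o(t^n)
\end{equation}
as $t\to 0$, because the linear terms $t+g'(0)t = t - t$ cancel and all intermediate derivatives vanish. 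Hence $t+g(t)$ is asymptotic to $\frac{g^{(n)}(0)}{n!}t^n$ near $0$, and since $g^{(n)}(0)\neq 0$, there is an $\ep>0$ such that $t+g(t)$ has constant sign (namely the sign of $g^{(n)}(0)$, times $(-1)$ if one considers $t<0$ and $n$ even, but on $(0,\ep)$ simply the sign of $g^{(n)}(0)$) throughout $(0,\ep)$. In particular $t+g(t)$ does not change sign on $(0,\ep)$, which is exactly the extra hypothesis needed to apply Theorem \ref{sufici}. That theorem then yields that $\cW_g$ satisfies the halfspace property, completing the converse.

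There is no real obstacle here; the corollary is essentially bookkeeping. The one point that deserves a word of care is confirming that the finite-order condition genuinely forces $t+g(t)$ to be one-signed on a one-sided neighborhood $(0,\ep)$ of $0$: one should note that the relevant interval in Theorem \ref{sufici} is $(0,\ep)$, i.e., only $t>0$ matters (this matches the fact that $I_g=[\alfa,\8)=[0,\8)$ when $\alfa=0$, so $g$ is defined and the Weingarten equation $\kappa_2=g(\kappa_1)$ is only used for $\kappa_1\geq 0$), so the parity of $n$ is irrelevant and the sign of $g^{(n)}(0)$ alone determines the sign of $t+g(t)$ on $(0,\ep)$. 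With \eqref{taylorsum} in hand this is immediate, and the proof is finished.
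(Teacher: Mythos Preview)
Your proof is correct and follows exactly the approach of the paper: necessity from Corollary \ref{neces}, and sufficiency by observing that the finite-order hypothesis forces $t+g(t)$ to have constant sign on some $(0,\ep)$, so that Theorem \ref{sufici} applies. You have simply spelled out the Taylor-expansion step that the paper leaves implicit.
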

\begin{proof}
The \emph{only if} part follows from Corollary \ref{neces}. The \emph{if} part is a consequence of Theorem \ref{sufici}, since the finite order hypothesis implies that $t+g(t)$ does not change sign in some $(0,\ep)$.
\end{proof}


\section{Isolated singularities of elliptic Weingarten surfaces}\label{sec:isolated}

We next prove that isolated singularities of elliptic Weingarten graphs in $\R^3$, not necessarily rotational, are always bounded.
\begin{theorem}\label{th:isosin}
If a graph $z=u(x,y)$ over a punctured disk $D^*\subset \R^2$ satisfies an elliptic Weingarten equation, then $u$ is bounded around the puncture.
\end{theorem}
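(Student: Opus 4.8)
The plan is to reduce the general (non-rotational) case to the already-established rotational picture by a comparison argument, using pinched/singular special catenoids and footballs as barriers. Suppose $u$ is an elliptic Weingarten graph on a punctured disk $D^*$, lying in some class $\cW_g$, and assume toward a contradiction that $u$ is unbounded near the puncture. I would first reduce, by restricting to a smaller disk, to the situation where the graph of $u$ lies, say, above every horizontal plane, with $|u|\to\8$ along some sequence $p_k\to 0$. The key point is that the surface satisfies a fully nonlinear \emph{elliptic} equation, so we have the interior comparison principle available: if two elliptic Weingarten surfaces of the same class are tangent at an interior point, with one lying above the other near that point and with coherent normals, then they coincide near the point (this is the same maximum principle already used in the proof of Theorem \ref{sufici}).

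The next step is to produce a rotational \emph{compact} comparison object in $\cW_g$ — concretely a football $\cF_\nu$ when $b\geq 0$, or one of the spheres of principal curvature $\alfa$ when $\alfa\neq 0$, and, in the remaining case $\alfa=0$, a piece of a pinched or singular special catenoid together with the plane (these examples are exactly the ones classified in Theorems \ref{th:Kpositiva}, \ref{th:minimal}, \ref{th:nonminimal}). One then slides such a rotational comparison surface $S_t$ (a vertical translate, and a dilation/choice of parameter) from high up in the region $\{z\gg 0\}$ downward toward the graph of $u$. Since the graph of $u$ is proper over $D^*$ away from the puncture but has a point going to $z=\8$, and since the comparison family can be arranged to sweep out a neighborhood of that escaping branch while staying bounded away from the puncture, there is a first point of contact, interior to both surfaces, at which the two are tangent. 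At that contact point the comparison principle forces the graph of $u$ to agree locally with a rotational piece that is itself bounded, contradicting $|u|\to\8$. The role of the boundary $\Gamma$ and Remark \ref{re:simetria} is that contact with a vertical-tangent point of the barrier is harmless, so the contact may be taken at a genuine interior graph point.

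The main obstacle, and where I would spend the most care, is the geometry of the sliding argument: one must guarantee that as the barrier descends there really is a first interior contact before the barrier either (a) touches the \emph{boundary circle} $\partial D^*$ of the domain of $u$, or (b) approaches the puncture $0$ itself, where $u$ is undefined. This is handled by choosing the barrier with a small enough ``neck'' (small $\nu$, or small necksize $\tau$, or small singular-circle radius $d$) so that it is very close to the punctured plane $z=0$ on compact sets away from the axis — exactly the convergence statements recorded in the proof of Theorem \ref{sufici} — together with the properness of $\{z=u\}$ over $D^*$: properness gives a positive lower bound for the distance from the part of the graph lying over an annulus $\{r_0\le r\le r_1\}$ to the plane, which in turn forces the first contact to occur over the annulus (hence interior, away from both $\partial D^*$ and the puncture) rather than near the rim or near $0$. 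The case analysis (according to $b\geq 0$, $\alfa=0$, or $\alfa\neq 0$ with $b<0$) is routine given Section \ref{sec:classification}, so the write-up would present the sliding/first-contact argument once and then indicate the appropriate barrier family in each case.

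One should note, finally, that the argument also uses that $u'$ (the radial derivative, in the rotational barriers) and the angle function behave monotonically near the singularity, so that the barriers are genuine graphs on the region where the contact takes place; this is exactly Lemma \ref{lem:isi} and Proposition \ref{propang}, and lets us phrase everything in terms of honest graphs rather than having to track orientations through the vertical-tangent locus.
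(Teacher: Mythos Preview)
Your plan has a genuine gap in the $\alfa=0$ case: you propose to use pinched or singular special catenoids as barriers, but these need not exist. By Theorem~\ref{th:minimal} they occur only when $|\cG(\8)|<\8$ or $|\cG(b)|<\8$, and by Corollary~\ref{co:uniform} they are absent for every uniformly elliptic class (in particular for minimal surfaces themselves). So in exactly the most classical situation your barrier family is empty and the sliding argument has nothing to slide. A second, related issue is the direction of the motion: with vertical sliding over the puncture, your compact barriers (footballs, spheres, pinched catenoids) have their pole or apex sitting directly above the origin, and the argument you sketch does not rule out that the first contact occurs there --- either over the puncture of $u$ (where the comparison principle is unavailable) or at a singular point of the barrier itself (footballs are singular at both poles). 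The convergence-to-a-punctured-plane heuristic you borrow from the proof of Theorem~\ref{sufici} was used there for a \emph{complete} surface $\Sigma$ bounded away from the plane on compact sets; here the graph has its singularity precisely on the axis of your barriers, so that heuristic no longer forces the contact off the axis.

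The paper's proof avoids both problems by using only the \emph{complete} rotational examples --- spheres, special unduloids, special nodoids, special catenoids --- which exist in every class $\cW_g$, and by sliding them \emph{horizontally} rather than vertically. For $\alfa>0$ one bounds $u$ from above with a horizontally translated sphere and from below with a family $\cO_\tau$ of special unduloids whose necksize shrinks to $0$ (so they sweep through the spike without ever sitting over the puncture); the case $\alfa<0$ is symmetric, with nodoids replacing unduloids. For $\alfa=0$ the trick is to take a compact piece of a special catenoid, rotate it so its axis is \emph{horizontal}, and then translate it horizontally toward the graph; the boundary circles lie in vertical planes and stay away from the puncture, so the first contact is interior with matching normals. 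None of this requires the singular rotational examples at all.
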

\begin{proof}
Let $\cW_g$ be the elliptic Weingarten class to which the graph $\Sigma\equiv z=u(x,y)$ belongs, and let $N$ be the unit normal of $\Sigma$. Without loss of generality, we assume that $N$ points upwards. Let $D^*$ be a disk of radius $r_0>0$ centered and punctured at the origin, and take constants $m,M$ such that $m\leq u \leq M$ on the boundary $\parc D^*=\S^1(r_0)$. We will divide our discussion into several cases, depending on the sign of the umbilical constant $\alfa$ of $\cW_g$.

Assume $\alfa>0$. It is clear that $u$ is bounded from above in $D^*$. Indeed, let $S_{\alfa}\in \cW_g$ be a round sphere of principal curvatures equal to $\alfa$. Place $S_{\alfa}$ so that it lies on the open halfspace $z>M$ of $\R^3$, and $S_{\alfa}\cap\Sigma$ is empty. If $u$ is not bounded from above, by translating $S_{\alfa}$ horizontally towards $\Sigma$ we will reach an interior first contact point between both surfaces, where the unit normals agree (note that the unit normal of $S_{\alfa}$ is the interior one, and the first contact point happens below the equator of $S_{\alfa}$). This contradicts the maximum principle and shows that $u$ is bounded from above.

We now prove that $u$ is bounded from below, adapting an argument from \cite{ER}. Let $\cO_{\tau_0}$ be the special unduloid of necksize $\tau_0$ of the Weingarten class $\cW_g$, with rotation axis given by the $z$-axis. Its unit normal points to the interior region bounded by $\cO_{\tau_0}$. Let $\cO_{\tau_0}^0$ be the compact piece of $\cO_{\tau_0}$ whose boundary consists of two consecutive circles at which the distance from $\cO_{\tau_0}$ to the rotation axis attains its maximum, $\tau_0'$. By taking $r_0$ smaller if necessary, assume that $r_0<\tau_0$. Next, translate $\cO_{\tau_0}^0$ downwards, so that its upper boundary lies in the plane $z=m-\ep$ for some $\ep>0$ (here $m={\rm min} \, u$ on $\S^1(r_0)$). For each $\tau\in (0,\tau_0]$, let $S_{\tau}$ be the surface obtained from the special unduloid $\cO_{\tau}$ via the above process; these surfaces exist, see Remark \ref{paraun}. It follows from the phase space analysis that when we decrease $\tau$, the radius $\tau'$ of its upper boundary (and also, by symmetry, of its lower boundary) in the plane $z=m-\ep$ increases. Note that both $S_{\tau_0}\cap \Sigma$ and $S_{\tau}\cap \parc \Sigma$ are empty. Also, the heights of the compact family $S_{\tau}$ are uniformly bounded for $\tau\in (0,\tau_0]$. Assume now that $u$ is not bounded from below around the origin. Then, by decreasing the necksize $\tau\to 0$ we would reach a first interior contact point of some element of the $S_{\tau}$-family with $\Sigma$, at which both surfaces have the same unit normal. This contradicts again the maximum principle. Thus, $u$ is bounded if $\alfa>0$.

If $\alfa<0$, the argument is similar. This time, we obtain that $u$ is bounded from below using the sphere $S_{\alfa}\in \cW_g$, which is now oriented by its exterior normal. In order to obtain that $u$ is bounded from above, we use as comparison objects adequate compact pieces of the special nodoids $\cN_{\tau}$ instead of the special unduloids $\cO_{\tau}$; see again Remark \ref{paraun}. We omit the details, as the process is analogous.

Finally, assume that $\alfa=0$. Let $C$ be a compact piece of a special catenoid $C_{\tau}^+$ of $\cW_g$, with rotation axis given by the $z$-axis, and with its interior orientation, and so that $\parc C$ is the union of two circles in horizontal planes. Let $2h>0$ be the distance between these two planes. By taking $r_0$ smaller if necessary, assume that $r_0<h$.

After an isometry, place $C$ so that: (i) the two boundary components of $\parc C$ lie in the vertical planes $x=h$ and $x=-h$; (ii) $C$ lies in the open half-space $z>M$ (recall that $M= {\rm max} \, u$ on $\S^1(r_0)$), and (iii) $C\cap \Sigma$ is empty. If $u$ is not bounded from above near the origin, translating now $C$ towards $\Sigma$ horizontally in the $y$-direction we reach a contradiction with the maximum principle.

The same argument using a compact piece of a special catenoid $C_{\tau}^-$ of $\cW_g$ with its exterior orientation shows that $u$ must also be bounded from below near the origin. This completes the proof.
\end{proof}

We next consider a global classification problem for elliptic Weingarten surfaces with isolated singularities. Motivated by \cite{GHM}, we introduce the following definition.

\begin{definition}
A \emph{peaked elliptic Weingarten sphere} is a closed convex surface
$S\subset \R^3$ (i.e. the boundary of a bounded convex set of
$\R^3$) that is regular everywhere except at a finite set $p_1,\dots, p_n\in S$, 
and such that $S\setminus
\{p_1,\dots, p_n\}$ is an elliptic Weingarten surface. 
We call $p_1,\dots, p_n$ the \emph{singularities} of $S$.
\end{definition}

We note that there exist peaked spheres with constant curvature $K=1$ (which is an elliptic Weingarten equation), with any number $n\geq 2$ of singularities, see \cite{GHM}.

\begin{theorem}\label{2sing}
Let $S$ be a peaked elliptic Weingarten sphere with $n\leq 2$ singularities. Then either $n=0$ and $S$ is a round sphere, or $n=2$ and $S$ is a rotational football $\cF_{\nu}$ (see Theorem \ref{th:Kpositiva}).
\end{theorem}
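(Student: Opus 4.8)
The plan is to use the convexity hypothesis together with the already-established classification of rotational elliptic Weingarten surfaces (Theorems \ref{th:Kpositiva}, \ref{th:nonminimal}, \ref{th:minimal}) and the local description of isolated singularities from Lemma \ref{lem:isi}. First I would dispose of the case $n=0$: if $S$ has no singularities, then $S$ is a closed (compact, boundaryless) elliptic Weingarten surface, hence by the classical Hopf-type/Alexandrov-type result for elliptic Weingarten equations (or simply because a compact elliptic Weingarten surface must be totally umbilic, as used implicitly via the analysis of $W^{-1}(0)$ and the fact that compact examples require $\alpha\neq 0$) it must be a round sphere of principal curvatures $\alpha$. The case $n=1$ has to be ruled out entirely; I expect this is where the convexity is crucial: a closed convex surface with exactly one singular point, regular elsewhere, cannot exist as an elliptic Weingarten surface, because the local cone-like model of Lemma \ref{lem:isi} forces a specific limit normal direction $\nu_0$ at the puncture, and gluing a single cone point onto an otherwise smooth convex surface is obstructed (the surface would have to ``close up'' smoothly from all directions around the single puncture, which contradicts the one-sided monotone cone-like behaviour). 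The main content is the case $n=2$.

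For $n=2$, the strategy is to show that $S$ is rotationally symmetric and then apply Theorem \ref{th:Kpositiva} (and the fact, from the classification, that among rotational elliptic Weingarten surfaces with two isolated singularities that are closed and convex, only the footballs $\cF_\nu$ occur; pinched special unduloids and pinched special nodoids have negative curvature near their singularities, hence are not convex, so they are excluded by the convexity hypothesis). To get rotational symmetry I would use an Alexandrov-type moving-plane argument adapted to the elliptic Weingarten setting: the surface $S$ is embedded (being the boundary of a convex body) and the equation \eqref{eq:g} is invariant under reflections, so the moving-plane method applies on the regular part $S\setminus\{p_1,p_2\}$. One reflects hyperplanes from each direction; the first interior contact point produces, via the interior maximum principle for the elliptic equation (Lemma 1 in \cite{Sch}), a plane of symmetry — provided the contact point is a regular point, which one arranges by choosing directions generically with respect to $p_1,p_2$, and handling the boundary/singular contact case using a boundary-point version of the maximum principle or by observing that the two singularities must be interchanged or fixed by the symmetry. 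Running the argument over all directions in a great circle of such planes forces $S$ to be a surface of revolution whose axis passes through $p_1$ and $p_2$.

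The hard part will be making the moving-plane argument rigorous \emph{in the presence of the two singular points}: one must ensure that the reflection process is not obstructed by a first contact occurring exactly at a puncture, and that the limit-angle data $\nu_0$ from Lemma \ref{lem:isi} is compatible with the reflective symmetry (by Proposition \ref{propang}, the rotational example is pinned down by $\nu_0$, and convexity plus $K>0$ near the singularities forces $b\geq 0$, i.e. the CGC-type regime of Subsection 5.1). Once rotational symmetry and the axis through $p_1,p_2$ are established, Theorem \ref{th:Kpositiva} leaves only spheres (excluded since they are regular, so $n$ would be $0$), bracelets (excluded since they are annuli bounded by singular curves, not spheres with $2$ isolated singularities), and footballs $\cF_\nu$; by convexity the profile curve is indeed convex, matching the football description, and each $\nu\in(0,1)$ gives exactly one such $S$. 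I would close by noting that the $n=1$ exclusion also follows a posteriori from this rotational analysis together with the symmetry of footballs under the antipodal-type involution exchanging the two singularities, so a single singularity is impossible.
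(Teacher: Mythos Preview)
Your overall strategy matches the paper's: dispose of $n=0$ via Hopf/Alexandrov, then run the moving-plane argument to force rotational symmetry and invoke the rotational classification. You also correctly identify the crux, namely that the reflection process might be obstructed by a first contact occurring exactly at a singularity. However, neither of the two mechanisms you propose for this step actually works. A boundary-point Hopf lemma is not applicable: the singular point is an interior point of the surface, the surface is not $C^2$ there, and there is no well-posed graph PDE to which the Hopf lemma applies at that point. Your alternative suggestion, that ``the two singularities must be interchanged or fixed by the symmetry,'' presupposes that a symmetry has already been established; at the moment of first contact at a puncture there is no symmetry yet, so this is circular.

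The paper fills this gap with a concrete perturbation argument that is the real content of the proof. If the reflected cap $S_c^*$ first touches $S$ at a singularity $p$, write both locally as convex graphs $u\leq u^*$ over a punctured disk with $u(0)=u^*(0)=0$ and $u^*$ smooth; convexity together with $u\leq u^*$ forces $u$ to be $C^1$ at the origin with $Du(0)=0$. One then \emph{tilts} the reflected graph slightly (a small rotation about a horizontal axis) so that $u^*>u$ still holds on the boundary circle but fails near $0$; next one translates the tilted graph upward until it clears $u$, and lowers it back down. The resulting first contact is now at a regular interior point, contradicting the maximum principle. This trick is what your plan is missing.

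A secondary issue: your treatment of $n=1$ is not self-contained. The paper handles $n=1$ together with $n=2$ by picking an arbitrary second point $p_2$ and running the moving planes with respect to planes parallel to the line $p_1p_2$; case (a) (regular first contact before the median plane) then yields an immediate contradiction since the singularities lie in the median plane. Your ``a posteriori'' exclusion of $n=1$ assumes rotational symmetry, which you have not established in that case.
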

\begin{proof}
If $n=0$, the theorem follows directly from the Alexandrov reflection principle \cite{A}, or alternatively, from the Hopf uniqueness theorem for immersed elliptic Weingarten spheres in \cite{GM3}. From now on, assume that $n\in \{1,2\}$. If $n=2$, let $\{p_1,p_2\}$ be the two singularities of $S$. If $n=1$, we let $p_1$ be the singularity of $S$, and $p_2$ be another arbitrary point.

We will use the Alexandrov reflection principle. Let $L$ be the line passing through $p_1$ and $p_2$, and let $\Pi_0$ be a plane parallel to $L$. For definiteness, assume after a rigid motion that $L$ is the $z$-axis of $\R^3$, that $\Pi_0$ is a plane $x={\rm const}$, and that $p_1=(a,0,0)$, $p_2=(-a,0,0)$ for some $a>0$. Take a plane $x=c$, with $c<0$, away enough so that it does not intersect $S$, and start moving it towards $S$ until reaching a first contact point of $S$ with some plane $x=c_0<0$. For each $c\in (c_0,0)$ beyond that moment, let $S_c^*$ denote the reflection of $S\cap \{x\leq c\}$ with respect to $x=c$. By applying the Alexandrov reflection principle in the standard way, we end up having one of the following three possibilities.

{\bf (a)} There exists a first contact point of $S_c^*$ with $S\cap \{x\geq c\}$ at some regular point of $S\cap \{x\geq c\}$, for some $c<0$. In that case, $x=c$ would be a plane of symmetry of $S$, a contradiction with the fact that $p_1,p_2$ are singularities of $S$ (they lie on the plane $x=0$). So this case does not happen.

{\bf (b)}  There exists a first contact point of $S_c^*$ with $S\cap \{x\geq c\}$ at one of the isolated singularities of $S$, for some $c<0$. We will see below that this situation is not possible.

{\bf (c)} $S_0^*$ is contained in the interior region bounded by $S\cap \{x\geq 0\}$. In that situation, if we now apply the reflection principle starting from $x=c$ with $c>0$ large enough, using items {\bf (a)}, {\bf (b)} we deduce that $x=0$ must be a plane of symmetry of $S$.

As we can do this process with respect to any plane $\Pi$ parallel to $L$, we deduce that $S$ is rotationally symmetric. By the classification of rotationally symmetric elliptic Weingarten surfaces in Section \ref{sec:classification}, the only such compact, convex surfaces are round spheres and the footballs $\cF_{\nu}$ of Theorem \ref{th:Kpositiva}.

So, to complete the proof of Theorem \ref{2sing} we only need to show that the situation described by item {\bf (b)} is not possible. In the conditions of {\bf (b)}, assume that $S_c^*$ has a first contact point with $S\cap \{x\geq c\}$ at a singularity $p$ of $S$. In adequate local coordinates $(x_1,x_2,x_3)$, we can assume that $p=(0,0,0)$, that $S_c^*$ and $S$ are graphs $x_3= u^*(x_1,x_2)$ and $x_3=u(x_1,x_2)$ over a punctured disk $\Omega^*$ of a small radius $\ep_0$ around the origin, with both $D^2 u, D^2 u^*$ positive definite on $\Omega^*$, and that $u\leq u^*$ in $\Omega^*$. Note that $u=u^*=0$ at the origin. Moreover, we can also assume that $D u^*(0,0)=(0,0)$. From these conditions, since $u^*$ is regular at the origin, we deduce by convexity that $u$ has a unique support plane at the origin, i.e., $u$ is of class $C^1$ at the origin, with $Du(0,0)=(0,0)$.

Since $u^*>u$ on $\S^1(\ep_0)=\parc \Omega$, we can slightly tilt the graph $x^3 = u^*(x_1,x_2)$ by making a small rotation around the $x_1$-axis, so that still $u^*>u$ on $\S^1(\ep_0)$ for the function obtained after this tilting, that we still denote by $u^*$. Note that now $u^*$ is not greater than $u$ anymore around the origin. Next, translate the graph $x_3 = u^*(x_1,x_2)$ vertically until it is above the graph of $u$, and start translating it back downwards until reaching a first contact point with the graph of $u$. This point does not lie above the boundary $\S^1(\ep_0)$ or the origin, since $u^*$ is not above $u$ near the origin, but $u(0,0)=u^*(0,0)=0$. This situation contradicts the maximum principle (note that the \emph{tilted} graph for $u^*$ is a solution of the same Weingarten equation as the original $u^*$), and completes the proof.
\end{proof}

\section{The non-elliptic case}\label{sec:yau}

A famous theorem by Chern \cite{Ch0} proves that if the principal curvatures of a $C^2$ ovaloid $S$ in $\R^3$ satisfy, in some order, a Weingarten equation $\kappa_1=h(\kappa_2)$, where $h$ is a decreasing function, then $S$ is a round sphere. Chern remarked that the hypothesis that $h$ is decreasing cannot be removed, since any rotational ellipsoid in $\R^3$ satisfies a Weingarten equation of the type 
 \begin{equation}\label{yau}
\kappa_2 = c \kappa_1^3,
\end{equation} for some $c>0$. Inspired by this situation, Yau \cite{Yau} (Problem 58, p. 682), asked whether any compact surface in $\R^3$ whose principal curvatures $\kappa_1,\kappa_2$ satisfy \eqref{yau} in some order must be a rotational ellipsoid. One should note here that the \emph{meridian} and \emph{parallel} principal curvatures $\mu,\landa$ of a rotational surface do not make sense anymore for arbitrary surfaces; this explains the \emph{in some order} comment for $\kappa_1,\kappa_2$. Also, observe that if a compact surface satisfies \eqref{yau}, then it has non-negative Gaussian curvature, and thus it has genus zero. 

Yau's question has an affirmative answer in the real analytic case, see \cite{KS,Si}. Indeed, by a striking classical theorem of Voss \cite{Vo}, any real analytic Weingarten surface $\Sigma$ of genus zero in $\R^3$ is a rotational sphere. (See \cite{GM5} for a version of Voss' theorem in the context of overdetermined PDE problems in the plane). Thus, if such $\Sigma$ satisfies \eqref{yau}, the \emph{meridian} and \emph{parallel} principal curvatures $\mu,\landa$ of $\Sigma$ verify either $\mu = c\landa^3$ or $\mu = c\landa^{1/3}$ at every point. Kühnel and Steller proved that if a real analytic rotational sphere satisfies any of these two equations, then it is an ellipsoid. This fact together with Voss' theorem provides a positive answer to Yau's question in the real analytic case. An alternative proof was later given by Simon \cite{Si}.

In contrast, we prove next that for $C^2$ surfaces, the answer to Yau's problem is negative.

\begin{theorem}\label{th:yau}
There exist compact surfaces of class $C^2$ in $\R^3$ other than ellipsoids whose principal curvatures $\kappa_1,\kappa_2$ verify, in some order, \eqref{yau} for $c>0$.
\end{theorem}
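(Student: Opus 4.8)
\emph{Proof idea.} After a homothety of $\mathbb{R}^3$, which scales the principal curvatures, we may normalize $c=1$, so that \eqref{yau} asserts that at every point the unordered pair $\{\kappa_1,\kappa_2\}$ lies on the union $\{\kappa_2=\kappa_1^3\}\cup\{\kappa_1=\kappa_2^3\}$. These two ``branches'' are increasing curves that cross exactly at the umbilic configurations $\kappa_1=\kappa_2=\pm1$ and $\kappa_1=\kappa_2=0$; in particular the relation is \emph{not} elliptic ($\frac{\parc W}{\parc\kappa_1}\frac{\parc W}{\parc\kappa_2}<0$ for $W=\kappa_2-\kappa_1^3$), and the family of solution surfaces fails to be rigid precisely along this crossing locus. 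The round unit sphere $\mathbb{S}^2$ and all rotational spheroids with $b=a^2$ solve the relation; $\mathbb{S}^2$ sits where the two branches meet, and the idea is to leave $\mathbb{S}^2$ along a deformation transverse to the spheroid family. This is the bifurcation that is unavailable in the elliptic (or real-analytic) category.

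The building block is a $C^2$ surface patch $\Sigma_1$, diffeomorphic to a closed disk, satisfying one branch of the Weingarten relation, which along its boundary curve $\gamma:=\partial\Sigma_1$ is totally umbilic with $\kappa_1=\kappa_2=1$, agrees to first order with a spherical cap of $\mathbb{S}^2$ there, yet is not itself an open piece of $\mathbb{S}^2$. Such a patch is necessarily non-rotational: a rotational piece of the relation that were umbilic along a whole latitude circle could only be a spherical cap, since the relation forces the radius of such a circle to vanish (the umbilic value $1$ is attained only on the rotation axis, at a pole). To produce $\Sigma_1$ one writes it as a graph over a planar disk and solves the Weingarten PDE for the chosen branch with the prescribed ``umbilic'' boundary behaviour; the degeneracy of the equation along $\gamma$ is exactly what permits a non-spherical solution. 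The cleanest way to organize this is a bifurcation argument: linearize the Weingarten operator at a spherical cap whose boundary carries umbilic value-$1$ data, observe that the crossing of the two branches endows the linearization with a solution transverse to the trivial (spherical) family, and run a Lyapunov--Schmidt / implicit-function scheme to obtain a one-parameter curve of genuine $C^2$ solutions issuing from the spherical cap, along which the surface stays convex and the principal curvatures remain in the range where ``$\kappa_2=\kappa_1^3$ in some order'' is legitimately realized. (Alternatively, and this is how the example was found, one fixes an explicit ansatz --- a graph $z=\frac12|p|^2+\cdots$ with prescribed angular dependence --- and solves the resulting ODE/recursion for the coefficients, which is where the phase-space analysis of Section \ref{sec:fases} enters.)

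Given $\Sigma_1$, let $\Sigma_0$ be the spherical cap of $\mathbb{S}^2$ bounded by $\gamma$ on the complementary side, and set $S:=\Sigma_0\cup_\gamma\Sigma_1$. Then $S$ is closed and convex, since $K=\kappa_1^4\geq0$ on it and hence $S$ bounds a convex body (as recalled in the Introduction), and it is of class $C^2$: along $\gamma$ the two pieces share position and tangent plane, hence induced metric, and their second fundamental forms then also agree, because both sides are umbilic with the same value $1$ along $\gamma$, so that $\mathrm{II}=\mathrm{I}$ on each side of $\gamma$. Thus the principal curvatures of $S$ satisfy \eqref{yau} in some order everywhere, and $S$ is $C^2$. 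Finally $S$ is not an ellipsoid: an ellipsoid is real-analytic, so were $S$ an ellipsoid, $\Sigma_1$ would be an analytic piece of it and, being tangent to $\mathbb{S}^2$ to first (hence, by the equation, to second) order along $\gamma$, would coincide with a piece of $\mathbb{S}^2$, contrary to its construction.

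The main obstacle is the construction of $\Sigma_1$: one needs a $C^2$ solution of a branch of the non-elliptic Weingarten relation that is regular up to the boundary, umbilic with value $1$ there, and not spherical. The difficulty is precisely the degeneracy of the equation along the umbilic boundary curve --- the characteristics of the hyperbolic branch become confluent there, so there is no off-the-shelf Cauchy existence/uniqueness statement to invoke, and in the real-analytic world Voss' rigidity theorem \cite{Vo} would even force $\Sigma_1$ to be a spherical cap. Controlling this degeneracy --- whether through the bifurcation argument or through the explicit ansatz --- is where the real work lies; everything else reduces to the elementary $C^2$-gluing verification above.
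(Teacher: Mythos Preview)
Your proposal has a genuine gap: the construction of $\Sigma_1$ is never carried out. You invoke a ``Lyapunov--Schmidt / implicit-function scheme'' and an ``explicit ansatz,'' but supply neither; as you concede, this is ``where the real work lies.'' A strategy is not a proof, and the degenerate boundary behaviour you point to is precisely what makes this step nontrivial.

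More to the point, the approach is misdirected. You observe that the two branches cross at the umbilic values $0$ and $\pm 1$, and then choose to bifurcate from the unit sphere at the value $1$, where your own argument correctly shows a rotational bifurcation is impossible, forcing you into non-rotational PDE territory. The paper instead bifurcates from a \emph{plane}, at the crossing $\kappa_1=\kappa_2=0$. There is no pole obstruction there: a flat disk is umbilic with value $0$ along every circle, and for the branch $\mu=c\lambda^{1/3}$ the rotational ODE admits orbits that reach $\lambda=0$ at a finite positive radius. Concretely, one sets
\[
u(r)=0 \ \ \text{on } [0,1], \qquad
u(r)=-\int_1^r \sqrt{\frac{c^3(t^{2/3}-1)^3}{1-c^3(t^{2/3}-1)^3}}\;dt \ \ \text{on } \big[1,(1+1/c)^{3/2}\big),
\]
checks directly that the rotational graph $z=u(\sqrt{x^2+y^2})$ is $C^2$ with principal curvatures satisfying \eqref{yau}, and that it extends to its outer boundary circle with vertical tangent plane and finite curvatures there. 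Reflecting across the horizontal plane through that circle gives a closed $C^2$ rotational surface that contains a flat disk and is therefore not an ellipsoid. No bifurcation theory, no degenerate boundary-value problem, no non-rotational patch --- one explicit formula and a short verification.
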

\begin{proof}
Let $u(r)$ be defined by

$$\left\{\def\arraystretch{1.5}\begin{array}{ll} u(r)=0 & \text{ if $0\leq r \leq 1$}, \\ u(r) =  - \int_1^r \sqrt{\frac{c^3 (r^{2/3}-1)^3}{1-c^3 (r^{2/3}-1)^3}} \, dr & \text{ if $1\leq r < (1+1/c)^{3/2}.$}\end{array} \right.$$

A computation shows that $u$ is of class $C^2$, and extends continuously with a finite value to $r_0:=(1+1/c)^{3/2}$. Also, $u'(r)\to -\8$ as $r\to r_0$. Let $S^+$ be the rotational $C^2$ surface given by $z=u(r)$, with $r=\sqrt{x^2+y^2}$, for $0\leq r \leq r_0$. The principal curvatures of $S^+$ are zero if $r\leq 1$, and are given (for the downwards unit normal) by $$\landa(r)= \left(c - \frac{c}{r^{2/3}}\right)^{3/2}, \hspace{1cm} \mu(r) = c \landa(r)^{1/3},$$ if $1\leq r \leq r_0.$ Note that $\mu(r_0),\landa(r_0)$ are finite and the tangent plane of $S^+$ is vertical along $\parc S^+$. In this way, the surface $S^+$ is $C^2$-smooth up to its boundary. Moreover, writing $\kappa_1=\mu(r)$, $\kappa_2=\landa(r)$, we see that $S^+$ satisfies \eqref{yau}.

Let $S^-$ denote the reflection of $S^+$ with respect to the horizontal plane of $\R^3$ given by $z=u(r_0)$, and consider the union $S=S^+\cup S^-$. It is clear that $S$ is a compact $C^2$ surface diffeomorphic to $\S^2$, which is not an ellipsoid and satisfies \eqref{yau}. 
\end{proof}

\begin{remark}
The existence of the above example was overlooked in \cite{KS,Si}, where Yau's question was studied for the particular case of rotational surfaces. In particular, our example contradicts Proposition 10 in \cite{KS}. 
\end{remark}

The basic idea behind the construction in Theorem \ref{th:yau} is that one can consider the totally umbilic rotational example that starts at the rotation axis (in this case, a plane), and then bifurcate from it away from the axis. This behavior is not possible for elliptic Weingarten surfaces. Indeed, Gálvez and Mira proved in \cite{GM3} that the umbilics of any surface in $\R^3$ satisfying an elliptic Weingarten equation $\kappa_2 =g(\kappa_1)$, with $g'(0)<0$, are necessarily isolated. (For the particular \emph{symmetric} case in which $g'(0)=-1$, this is a classical theorem, see \cite{HW1} or \cite{B}). 

However, if we slightly weaken the ellipticity condition $g'<0$ to the degenerate elliptic case $g'\leq 0$, the umbilics of a Weingarten surface satisfying $\kappa_2=g(\kappa_1)$ fail to be isolated in general, see \cite{Mu,GMT}. Specifically, there exist examples of such degenerate elliptic Weingarten equations where one can bifurcate from a totally umbilic sphere meeting the rotation axis, to create non-round, degenerate elliptic, rotational Weingarten spheres. See Figure A.1 in \cite{GMT}.

\def\refname{References}

\vskip 0.2cm

\noindent Isabel Fernández

\noindent Departamento de Matemática Aplicada I,\\ Instituto de Matemáticas IMUS \\ Universidad de Sevilla (Spain).

\noindent  e-mail: {\tt isafer@us.es}

\vskip 0.2cm

\noindent Pablo Mira

\noindent Departamento de Matemática Aplicada y Estadística,\\ Universidad Politécnica de Cartagena (Spain).

\noindent  e-mail: {\tt pablo.mira@upct.es}

\vskip 0.4cm

\noindent This research has been financially supported by Project PID2020-118137GB-I00 funded by MCIN/AEI /10.13039/501100011033.

\end{document}